\newcommand{\Diam}{{\rm Diam }}
\newcommand{\trace}{{\rm trace}\,}
\newcommand{\Ein}{{\mathrm {Ein}}}
\newcommand{\cEin}{{\mathbf {Ein}}}
\newcommand{\ein}{{\mathrm {ein}}}
\newcommand{\cein}{{{\mathbf {ein}}}}
\newcommand{\Eink}{{\mathrm {Ein}_k}}
\newcommand{\Schoutk}{{\mathrm {Sch}_k}}
\newcommand{\Einl}{{\mathrm {Ein}_l}}
\newcommand{\Eint}{{\mathrm {Ein}_2}}
\newcommand{\vol}{{\rm {Vol}}}
\newcommand{\totscal}{{\rm {TotalScal}}}
\newcommand{\Scal}{{\rm Scal}}
\newcommand{\Ric}{{\rm Ric}}
\newcommand{\str}{{\rm string}}
\DeclareMathAlphabet{\mathpzc}{OT1}{pzc}{m}{it}
\def\Z{{\mathbf Z}}
\newtheorem{theorem}{Theorem}[section]
\newtheorem{corollary}[theorem]{Corollary}
\newtheorem{proposition}[theorem]{Proposition}
\newtheorem*{thm-a}{Theorem\! A}
\newtheorem*{cor-a}{Corollary\! A}
\newtheorem*{thm-b}{Theorem\! B}
\newtheorem*{cor-b}{Corollary\! B}
\newtheorem*{thm-c}{Theorem\! C}
\newtheorem*{cor-c}{Corollary\! C}
\newtheorem*{thm-d}{Theorem\! D}
\newtheorem*{cor-d}{Corollary\! D}
\newtheorem*{thm-e}{Theorem\! E}
\newtheorem*{cor-e}{Corollary\! E}
\newtheorem*{thm-f}{Theorem\! F}
\newtheorem*{cor-f}{Corollary\! F}
\newtheorem*{conj-d}{Conjecture D}
\newtheorem*{conj-c}{Conjecture C}
\newtheorem{thm}{Theorem}[section]
\theoremstyle{definition}
\newtheorem{definition}[thm]{Definition}
\newtheorem*{remark}{Remark}
\newtheorem*{examples}{Examples}
\begin{document}
\author{Mohammed Labbi} 
\address{Department of Mathematics\\
 College of Science\\
  University of Bahrain\\
  32038, Bahrain.}
\email{mlabbi@uob.edu.bh}

\title{On  modified Einstein tensors and two smooth invariants of compact manifolds}  \maketitle
 \begin{abstract}
Let $(M,g)$ be a Riemannian $n$-manifold, we denote by $\Ric$ and $\Scal$ the Ricci and the scalar curvatures of $g$. For scalars $k<n$, the modified Einstein tensors denoted $\Eink$  are defined as  $\Eink :=\Scal \, g -k\Ric$.
Note that the usual Einstein tensor coincides with the half of $\Eint$ and ${\rm Ein}_0=\Scal.g$. It turns out that all these new modified tensors, for
$0<k<n$, are still  gradients of the total scalar curvature functional but with respect to  modified integral scalar products.
% In this paper we study the positivity properties of these tensors that generalize the positivity properties of the scalar curvature ($k=0$) and positive Einstein curvature ($k=2$).
 The positivity of $\Eink$ for some positive $k$ implies the positivity of all ${\rm Ein}_l$ with $0\leq l\leq k$ and so we define a smooth invariant $\cEin(M)$ of $M$  to be the supremum of positive k's that renders $\Eink$ positive. By definition $\cEin(M)\in [0,n]$, it is zero if and only if $M$ has no positive scalar curvature metrics and it is maximal equal to $n$ if $M$ possesses an Einstein metric with positive scalar curvature. In some sense, $\cEin(M)$ measures how far is $M$ to admit an Einstein metric of positive scalar curvature. In this paper we prove that $\cEin(M)\geq 2$ if $M$ admits an effective action by a non abelian connected Lie group or if $M$ is simply connected of positive scalar curvature and dimension $\geq 5$. We prove as well that the invariant $\cEin$ increases after a surgery operation on the manifold $M$ or by assuming that the manifold $M$ has higher connectivity. We prove that the condition $\cEin(M)\leq n-2$ does not imply any restriction on the first fundamental group of $M$. We define and prove similar properties for an analogous invariant namely $\cein(M)$. The paper contains several open questions.
\end{abstract}
\tableofcontents
\section{Introduction}

Let $(M,g)$ be a Riemannian manifold of dimension $n$. We denote as usual by $\Ric$ and $\Scal$ the Ricci curvature and the scalar curvature. The Einstein tensor $\Ein:=\frac{1}{2}\Scal\, g -\Ric$ is the gradient of the total scalar curvature functional. In this paper, we slightly modify the  Einstein tensor and we define the {\sl modified Einstein tensors}
\begin{equation}
\Eink :=\Scal \, g -k\Ric,
\end{equation}
Where $k$ is a constant. We recover the (double of) Einstein tensor for $k=2$. All these new modified tensors , for
$0<k<n$, are still  gradients of the same total scalar curvature functional but with respect to  modified integral scalar products, see proposition \ref{Bourg} below.
\subsection{Positivity properties of modified Einstein tensors}
We are interested in the positivity properties of these tensors and to their effect on the topology of the manifold. Note that
\begin{equation}
\trace(\Eink)=(n-k)\Scal.
\end{equation}
Consequently, for $k<n$, the positivity of $\Eink$ implies the positivity of the scalar curvature. Note that for $k>n$, $\Eink>0 \Rightarrow \Scal <0$, furthermore,  for $k=n$, $\Eink$ is, up to a factor, the trace free Ricci  tensor. It cannot have a constant sign unless the metric $g$ is Einstein with zero scalar curvature. For these reasons, we will restrict our study to the cases $k<n$. The following proposition shows that the positivity property is hereditary and its proof is straightforward
\begin{proposition}\label{hered}
\begin{enumerate}
\item For any real numbers $0<k<l<n$ we have $$\Einl>0 \Rightarrow \Eink>0\Rightarrow \Scal>0.$$
\item For any real numbers $k<l<0$ we have $$\Eink>0 \Rightarrow \Einl>0\Rightarrow \Scal>0.$$
\end{enumerate}

\end{proposition}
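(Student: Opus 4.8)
\emph{Proof proposal.} The plan is to exhibit each $\Eink$ as a convex combination of $\Einl$ (or $\Eink$, in part (2)) and the tensor $\Ein_0=\Scal\, g$, and then invoke that a sum of positive-definite symmetric tensors with positive coefficients is positive definite.

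For part (1), fix $0<k<l<n$ and set $t:=k/l\in(0,1)$. Since $k=tl+(1-t)\cdot 0$, a direct expansion gives
\begin{equation}
\Eink=\Scal\, g-k\Ric=t\bigl(\Scal\, g-l\Ric\bigr)+(1-t)\,\Scal\, g=t\,\Einl+(1-t)\,\Ein_0.
\end{equation}
Assume $\Einl>0$. Taking traces and using $\trace(\Einl)=(n-l)\Scal$ with $n-l>0$, we get $\Scal>0$ pointwise, hence $\Ein_0=\Scal\, g>0$. Therefore the right-hand side above is a sum of two positive-definite tensors scaled by the positive numbers $t$ and $1-t$, so $\Eink>0$. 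Finally, from $\trace(\Eink)=(n-k)\Scal$ with $n-k>0$ we again read off $\Scal>0$, which completes the chain of implications.

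For part (2), fix $k<l<0$ and set $t:=l/k\in(0,1)$ (the ratio of two negative numbers with $|l|<|k|$). Since $l=tk+(1-t)\cdot 0$, the same expansion yields
\begin{equation}
\Einl=\Scal\, g-l\Ric=t\,\Eink+(1-t)\,\Ein_0.
\end{equation}
Assuming $\Eink>0$, the identity $\trace(\Eink)=(n-k)\Scal$ with $n-k>0$ forces $\Scal>0$, hence $\Ein_0>0$, and the displayed convex combination is again a positive-definite tensor, so $\Einl>0$; one more trace computation gives $\Scal>0$.

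There is no real obstacle here: the only point to be careful about is that the coefficient $t$ lands in $(0,1)$ (so that $1-t>0$ as well), which is exactly why the hypotheses are phrased as strict inequalities $0<k<l<n$ in (1) and $k<l<0$ in (2), and that $n-k>0$ throughout so the trace identity transmits positivity of $\Eink$ to positivity of $\Scal$. Everything else is the elementary fact that positive-definiteness is preserved under nonnegative linear combinations.
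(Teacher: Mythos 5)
Your proof is correct, and since the paper simply declares this proposition ``straightforward'' and omits the argument, your convex-combination decomposition $\Eink=t\,\Einl+(1-t)\,\Ein_0$ together with the trace identity $\trace(\Eink)=(n-k)\Scal$ is exactly the elementary verification the author has in mind. Nothing is missing.
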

\begin{examples}
\begin{enumerate}
\item For a Riemannian $n$-manifold  with  constant sectional curvature $\lambda$, $\Eink=(n-1)(n-k)\lambda g$. In particular, it is positive if $n>1$,  $k<n$ and $\lambda>0$.
\item For an Einstein $n$-manifold  with  scalar curvature $\rho$, $\Eink=\rho (n-k)\lambda g$. In particular, it is positive if $k<n$ and $\rho>0$.
\end{enumerate}
\end{examples}
\subsection{ The $\Ein$ and $\ein$ invariants of  Riemannian metrics}
In view of proposition \ref{hered}, it is natural to define the following scalars
\begin{definition}
For a Riemannian metric $g$ on a fixed $n$-manifold, we define the scalars
\begin{equation}
\Ein(g):=\sup\{k \in (0, n): \Eink>0\}\,\,\, {\mathrm and}\,\,\, \ein(g):=\inf\{k<0:\Eink>0\}.
\end{equation}
We set $\Ein(g)=\ein(g)=0$  if the scalar curvature of $g$ is not positive and $\ein(g)=-\infty$ in case the corresponding set of $k$'s is unbounded below. 
\end{definition}
Recall that by $\Eink>0$, we mean that it is positive definite at each point of the manifold. Of course one may alternatively, define a scalar curvature functions, by taking the above supremum and infinimum pointwise  that is at each point of  the manifold.\\
For a compact Riemannian manifold $(M,g)$ with positive scalar curvature one can check without difficulties that
\begin{equation}
\Ein(g)=\inf_{x\in M}\frac{\Scal(x)}{\rho_{\mathrm {max}}(x)},
\end{equation}
where $\rho_{\mathrm {max}}(x)$ denotes the highest eigenvalue of Ricci at $x\in M$. In particular, $\Ein(g)>0$ for any metric with positive scalar curvature on a compact manifold. Note that a similar formula holds for $\ein(g)$ if one replaces $\inf$ by $\sup$ and $\rho_{\mathrm {max}}$ by $\rho_{\mathrm {min}}$.\\
As an immediate consequence of the previous formula we have
\begin{proposition}
For a compact Riemannian $n$-manifold $(M,g)$ we have
\[ \Ein(g)=n\iff g \,\, {\rm {is\,\, an\,\, Einstein\,\, metric\, \,with\,\, positive\,\, scalar\,\, curvature.}}\]
\end{proposition}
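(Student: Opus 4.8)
The plan is to deduce both implications from the pointwise formula $\Ein(g)=\inf_{x\in M}\Scal(x)/\rho_{\mathrm{max}}(x)$ established just above (valid whenever $g$ has positive scalar curvature), combined with one elementary observation from linear algebra: if $\Scal(x)>0$ and $\rho_1(x)\le\cdots\le\rho_n(x)=\rho_{\mathrm{max}}(x)$ are the eigenvalues of $\Ric$ at $x$, then $\rho_{\mathrm{max}}(x)>0$ and
\[
\Scal(x)=\sum_{i=1}^n\rho_i(x)\le n\,\rho_{\mathrm{max}}(x),
\]
with equality precisely when all the $\rho_i(x)$ agree, i.e. when $\Ric(x)=\tfrac{\Scal(x)}{n}\,g(x)$.

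For the easy direction, suppose $g$ is Einstein with positive scalar curvature. Then $\Ric=\tfrac{\Scal}{n}g$ with $\Scal$ a positive constant, so $\Scal(x)/\rho_{\mathrm{max}}(x)=n$ at every point and the infimum defining $\Ein(g)$ equals $n$; alternatively one reads this off directly from the Einstein case in the Examples above, where $\Eink$ is a positive multiple of $g$ for every $k<n$.

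For the converse, assume $\Ein(g)=n$. Since this value is positive, $g$ has positive scalar curvature (by the observation recorded just before the statement), so the displayed formula applies. Because the infimum is a lower bound for all the quotients $\Scal(x)/\rho_{\mathrm{max}}(x)$, while the linear-algebra inequality shows each such quotient is at most $n$, we must have $\Scal(x)/\rho_{\mathrm{max}}(x)=n$ for every $x\in M$. Equality in the eigenvalue inequality then forces $\Ric(x)=\tfrac{\Scal(x)}{n}g(x)$ at each point. For $n\ge3$, Schur's lemma upgrades this to constancy of $\Scal$ (using connectedness of $M$), so $g$ is Einstein; for $n=2$ the identity $\Ric=\tfrac{\Scal}{2}g$ holds automatically. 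In either case $\Scal>0$.

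I expect no serious obstacle: the argument is short. The only two points needing a moment's care are the logical step ``infimum $=n$ together with all quotients $\le n$ forces all quotients $=n$,'' and the invocation of Schur's lemma to pass from the pointwise Einstein condition to a genuine Einstein metric.
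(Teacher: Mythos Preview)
Your proof is correct and is precisely the natural fleshing-out of what the paper means by ``an immediate consequence of the previous formula''; the paper offers no further argument beyond that phrase. Your appeal to Schur's lemma and the separate treatment of $n=2$ are reasonable bits of bookkeeping that the paper simply leaves implicit.
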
 
We will see in the sequel of the paper a similar extremal property for $\ein(g)$, namely $\ein(g)=-\infty$ if and only if $g$ has nonnegative Ricci curvature and positive scalar curvature. \\

In the following example of Berger spheres, we will  show that it is possible for $\Ein(g)$ to be any value between $0$ and $n$ and for $\ein(g)$ to take any value from $0$ to $-\infty$.\\
Let $\pi:S^{2n+1}\rightarrow P^n {\Bbb C}$ be the Hopf fibration of the
sphere of dimension $2n+1$ by circles $S^1$ over the complex
projective space of real dimension $2n$. Let the sphere be endowed
with the standard Riemannian metric $g$ with curvature 1 and the
complex projective space be endowed with the Fubiny-Study metric
$\check{g}$ which renders the projection $\pi$ a Riemannian
submersion. We then shrink the fibers (circles) by multipling the
metric by $t>0$ in the directions tangent to the fibers. The result is
a new Riemannian metric $g_t$, called the canonical variation of $g$,
on $S^{2n+1}$ and the projection $\pi:S^{2n+1}\rightarrow P^n {\Bbb
  C}$ remains a Riemannian submersion. It turns out that for each $t>0$, the Ricci
curvature of the metric $g_t$  has one
eigenvalue $\rho_1=2nt$ of multiplicity one and another eigenvalue
$\rho_2=-2t+2n+2$ of multiplicity $2n$, see for instance \cite{Besse}. In particular, the Ricci curvature of $g_t$ is positive if and only if $0<t<n+1$. The scalar curvature of $g_t$ is
constant and is equal to $-2nt+2n(2n+2)$.  Consequently, the $\Eink$ tensors of the metric $g_t$ have one eigenvalue $\nu_1=4n(n+1)-2nt(1+k)$ of multiplicity one and a second eigenvalue $\nu_2=2t(k-n)+(2n-k)(2n+2)$ of multiplicity $2n$. One can then immediately check the following
\begin{proposition} The berger metrics $g_t$ on the sphere $S^{2n+1}$ satisfy the following properties
\begin{itemize}
\item For $t\geq 2n+2$, $\Ein(g_t)=\ein(g_t)=0$.\\
\item For $1\leq t<2n+2$, $\Ein(g_t)=\frac{2n+2}{t}-1$, it ranges between $2n+1$ and $0$.\\
\item For $0<t<1$, $\Ein(g_t)=n\bigl(1+\frac{2n+2}{-2t+2n+2}\bigr)$, it ranges between $2n$ and $n+1$.\\
\item For $n+1< t<2n+2$, $\ein(g_t)=n\bigl(1+\frac{2n+2}{-2t+2n+2}\bigr)$, it ranges between $0$ and $-\infty$.
\item For $0<t\leq n+1$, $\ein(g_t)=-\infty.$
\end{itemize}
\end{proposition}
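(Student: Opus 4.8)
The statement is a direct computation, since $g_t$ is a homogeneous metric on the $(2n+1)$-dimensional manifold $M=S^{2n+1}$, so the scalar curvature and the eigenvalues of $\Ric$ and of $\Eink$ are constant on $M$, with the values recorded just above the statement. In particular $\Eink(g_t)$ is positive definite if and only if its two eigenvalues $\nu_1=4n(n+1)-2nt(1+k)$ and $\nu_2=2t(k-n)+(2n-k)(2n+2)$ are both positive, and each of these is an affine inequality in $k$. The plan is thus: (i) solve $\nu_1>0$ and $\nu_2>0$ for $k$, distinguishing the cases that fix which of the two is the binding constraint; (ii) take the supremum over $k\in(0,2n+1)$ to get $\Ein(g_t)$ and the infimum over $k<0$ to get $\ein(g_t)$; (iii) read off the claimed ranges by evaluating at the endpoints of each $t$-interval.

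For $\Ein(g_t)$ I would first note that $\Scal(g_t)=2n(2n+2-t)>0\iff t<2n+2$, so for $t\ge 2n+2$ no positive $k$ makes $\Eink(g_t)$ positive and by definition $\Ein(g_t)=\ein(g_t)=0$. For $t<2n+2$ one has $\Scal>0$, and I would use the formula $\Ein(g_t)=\Scal(g_t)/\rho_{\max}$ from the introduction (equivalently, read it off $\nu_1>0,\ \nu_2>0$), where $\rho_{\max}=\max\{\rho_1,\rho_2\}$. Since $\rho_1-\rho_2=2nt-2(n+1-t)=2(n+1)(t-1)$, we have $\rho_{\max}=\rho_1=2nt$ for $t\ge 1$ and $\rho_{\max}=\rho_2=2(n+1-t)$ for $0<t\le 1$; plugging in gives $\Ein(g_t)=\frac{2n(2n+2-t)}{2nt}=\frac{2n+2}{t}-1$ on $[1,2n+2)$ and $\Ein(g_t)=\frac{2n(2n+2-t)}{2(n+1-t)}=n\bigl(1+\frac{2n+2}{-2t+2n+2}\bigr)$ on $(0,1)$. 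Along the way one checks that the other inequality is never the binding one on the relevant range and that $\Scal/\rho_{\max}\le 2n+1$, so no truncation at $\dim M$ occurs; this is again just the comparison of $\rho_1$ and $\rho_2$.

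For $\ein(g_t)$ I would argue directly with $k<0$. Because $\Scal>0$ and $\rho_1=2nt>0$, the eigenvalue $\nu_1=\Scal-k\rho_1$ is automatically positive for every $k<0$, so for negative $k$ the positivity of $\Eink(g_t)$ is controlled only by $\nu_2=\Scal-k\rho_2$. If $t\le n+1$ then $\rho_2=2(n+1-t)\ge 0$, hence $\nu_2\ge\Scal>0$ for all $k<0$; the set of admissible negative $k$ is all of $(-\infty,0)$, so $\ein(g_t)=-\infty$ (equivalently, $\Ric(g_t)\ge 0$ and $\Scal(g_t)>0$). If $n+1<t<2n+2$ then $\rho_2<0$ and $\nu_2>0\iff k>\Scal/\rho_2$, so the admissible set is $(\Scal/\rho_2,0)$ and $\ein(g_t)=\Scal(g_t)/\rho_2=\frac{2n(2n+2-t)}{2(n+1-t)}=n\bigl(1+\frac{2n+2}{-2t+2n+2}\bigr)$, which is negative on this range.

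Finally the stated ranges follow by letting $t$ tend to the endpoints of each interval. There is no real obstacle here: the argument is the bookkeeping of two affine inequalities in $k$ together with the sign and ordering of $\rho_1(t)$ and $\rho_2(t)$. The only places needing a little care are identifying which of $\nu_1>0$, $\nu_2>0$ bounds $k$ on each sub-interval (settled by the sign of $\rho_1-\rho_2=2(n+1)(t-1)$), and the degenerate parameter values $t=1$ (where $\rho_1=\rho_2$), $t=n+1$ (where $\rho_2=0$, so $\Ric\ge0$ and $\ein=-\infty$), and $t=2n+2$ (where $\Scal=0$).
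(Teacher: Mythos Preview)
Your proposal is correct and is precisely the direct computation the paper has in mind; the paper itself gives no proof beyond stating that ``one can then immediately check'' the proposition from the eigenvalue formulas $\nu_1,\nu_2$, and you have carried out exactly that check, using the formula $\Ein(g)=\Scal/\rho_{\max}$ and the sign of $\rho_1-\rho_2=2(n+1)(t-1)$ to identify the binding constraint. There is nothing to add on the method; your handling of the degenerate parameter values and the verification that $\Scal/\rho_{\max}\le 2n+1$ (so no truncation at $\dim M$) are the only points requiring any care, and you flag them correctly.
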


\begin{remark} The previous example shows in particular that the positivity of Ricci curvature does not imply in general the positivity of $\Eink$ for $k>1$. Precisely,  for  $\frac{2(n+1)}{k+1}<t<n+1$, the Berger metric $g_t$ has positive Ricci curvature but all the $\Eink$ curvatures are not positive for $1<k<2n+1$.
\end{remark}

\subsection{ The $\cEin$ and $\cein$ invariants of  compact manifolds}
For a given compact $n$-manifold, denote by ${\mathcal M}$ the space of all  Riemannian metrics on $M$. We  define two smooth invariants $\cEin(M)$ and $\cein(M)$  of $M$ as follows
\begin{equation}
\begin{split}
\cEin(M)=& \sup\{\Ein(g):  g\in {\mathcal M}\},\\
\cein(M)=& \inf \{\ein(g):  g\in {\mathcal M}\}.
\end{split}
\end{equation}
Note that by definition $\cEin(M)=\cein(M)=0$ if $M$ does not admit any metric with positive scalar curvature. If $M$ admits a positive scalar curvature metric then $\cEin(M)>0$ and $\cein(M)<0$, if furthermore the metric has nonnegative Ricci curvature then $\cein(M)=-\infty$. In the case $M$ possesses an Einstein metric with positive scalar curvature then $\cEin(M)=n$. In particular, it is important to remark that
\begin{center}
A compact $n$-manifold $M$ with $\cEin(M)<n$ does not admit Einstein metrics of positive type.
\end{center}
 In general, $\cEin(M)\in [0,n]$ and $\cein(M)\in [-\infty, 0].$\\
In this paper we are interested in the study of compact manifolds with the property $\cEin(M)>k$ for some $k\in [0, n)$. For $k=0$, that condition is equivalent to positive scalar curvature and was extensively studied by Gromov, Lawson, Schoen, Yau, Stolz and many others. We remark that
\[ \cEin(M)>k \iff \exists g\in {\mathcal M}\,\, {\rm with}\,\, \Ein_k(g)>0.\]
\subsection{Statement of the results}
The property $\Eink >0$  behaves very well with Riemannian products and as well with Riemannian submersions as shown by the next theorem.\\
Let $(M,g)$ be a total space of a Riemannian submersion $p: M\to
B$. We denote by $\hat g$ the restriction of $g$ to
the fiber $F=p^{-1}(x)$, $x\in B$. \\
For a Riemannian submersion $p: M\to B$ as above, there is a
\emph{canonical variation} $g_t$ of the original metric $g$, which is
a fiberwise scaling by $t^2$. Then, if the fiber metrics $\hat g$
satisfies some bounds, this construction delivers a metric on a total
space with positive (negative) curvature.

\begin{thm-a} 
Let $M$ be a compact manifold that admits a Riemannian metric which make it the 
 total space of a Riemannian submersion $\pi: M\to
B$. Denote by $p$ the dimension of the fibers.
We suppose that all the fibers with the induced metric have positive $\Eink$ curvature for some $k$.
\begin{enumerate}
\item If $0<k<p$ then $\cEin(M)>k$.
\item If $k<0$ then $\cein(M)<k$.
\end{enumerate}
In particular, the cartesian product of two compact manifolds $M_1$ and $M_2$ satisfies
\[\cEin(M_1\times M_2)\geq \max\{\cEin(M_1),\cEin(M_2)\}\,\, {\rm and}\,\, \cein(M_1\times M_2)\leq \min\{\cein(M_1),\cein(M_2)\}\]
\end{thm-a}
For instance, If $M$ is any compact manifold and $S^2$ is the two sphere then
\[\cEin(S^2\times M)\geq 2\,\, {\rm and}\,\, \cein(S^2\times M)=-\infty.\]

Another application of the above theorem is the following generalization of Lawson-Yau theorem \cite{Lawson-Yau} about positive scalar curvature
\begin{thm-b}
If a compact connected manifold $M$ admits  an effective action of a non-abelian  compact connected Lie group  $G$ then $\cEin(M)\geq 2$ and $\cein(M)=-\infty$.
\end{thm-b}

\begin{thm-c}
\begin{enumerate}
\item[a)] Suppose a compact manifold $N$ is obtained from a manifold $M$ by surgeries of codimensions $\geq 3$ then
\[\cein(N)\leq \cein(M).\]
Furthermore, if $\cEin(M)\leq 2$ then $\cEin(N)\geq \cEin(M)$.
\item[b)] Suppose $\cEin(M)> 2$ and the manifold $N$ is obtained from $M$ by surgeries of codimensions $\geq \cEin(M)+1$, then
\[\cEin(N)\geq \cEin(M).\]
\end{enumerate}
\end{thm-c}
A direct consequence of the above theorem is the following
\begin{cor-c}
\begin{enumerate}
\item[a)] The connected sum of two compact manifolds $M_1$ and $M_2$ of dimensions $\geq 3$ satisfies
\[\cein(M_1\#M_2)\leq \max\{\cein(M_1),\cein(M_2)\}.\]
\item[b)] The connected sum of two compact manifolds $M_1$ and $M_2$ of dimensions $\geq 3$ and such that $\cEin(M_i)\leq 2$ for $i=1,2$ satisfies
\[\cEin(M_1\#M_2)\geq \min\{\cEin(M_1),\cEin(M_2)\}.\]
\item[c)] The connected sum of two compact $n$-manifolds $M_1$ and $M_2$ such that $2<\cEin(M_i)\leq n-1$ for $i=1,2$  satisfies
\[\cEin(M_1\#M_2)\geq \min\{\cEin(M_1),\cEin(M_2)\}.\]

\end{enumerate}
\end{cor-c}

\begin{thm-d}\label{simply-connected-thm}
Let $n\geq 5$ and $k\in(-\infty, 2)$. Then
\begin{enumerate}
\item Any compact simply connected and non-spin $n$-manifold $M$ with $n>4$ satisfies the properties
\[  \cEin(M) \geq 2 \,\, {\rm and}\,\,  \cein(M)=-\infty.\]

\item For a compact simply connected spin $n$-manifold $M$ with $n>4$ we have the equivalence of the following properties
\[ \cEin(M)>0\iff \cEin(M) \geq 2\iff \cein(M)=-\infty.\]
\end{enumerate}
\end{thm-d}

As we increase the connectivity of the manifold psc  can be upgraded for free up to 3 or 4 as follows
\begin{thm-e}
\begin{enumerate}
\item For a compact $2$-connected  $n$-manifold $M$ with $n>6$ we have the following equivalence 
\[ \cEin(M)>0\iff \cEin(M) \geq 3.\]
\item For a compact $3$-connected and non-string  $n$-manifold $M$ with $n>8$ we have the following equivalence 
\[ \cEin(M)>0\iff \cEin(M) \geq 4.\]
\end{enumerate}
\end{thm-e}

\begin{thm-f} Let $\pi$ be any finitely presented group and $n>3$.
\begin{enumerate}
\item For every $k\in(0, n-2)$, there exists a compact $ n$-manifold $M$ with $\cEin(M)>k$ and $\pi_1(M)=\pi$.
\item  For every $k\in(-\infty, 0)$, there exists a compact $ n$-manifold $M$ with $\cein(M)<k$ and $\pi_1(M)=\pi$.
\end{enumerate}
\end{thm-f}
\subsection{Plan of the paper}
The next section is section 2 where we will study some properties of the modified Einstein tensors. In section 2.1 we prove that each modified Einstein tensor $\Eink$  is the gradient of the total scalar curvature functional once restricted to metrics with unit volume with respect to a modified integral scalar product. In section 2.2 we emphasize on the microscopic effect of the positivity of some $\Eink$ on the volume of small geodesic spheres and the volume and total scalar curvature of small tubes around curves in the manifold. Section 2.3 is about relations between the positivity of $\Eink$ and the positivity of other curvatures. In particular, we prove the following implications
\begin{itemize}
\item For $1\leq k\leq n-1$, the positivity of $\Eink$ implies that the Ricci curvature is $(n-k)$-positive. In particular,  $\Eint>0$ implies $(n-2)$-positive Ricci curvature and ${\rm Ein}_{n-1}>0$ implies the positivity of the Ricci curvature.
\item  For $k$ integer with $1\leq k\leq n-1$, the positivity of $\Eink$ implies that the Ricci operator has at least $k+1$ positive eigenvalues.
\item For $n\geq 4$ and $k=\frac{2n(n-1)}{3n-4}$, the positivity of $\Eink$ implies the positivity of the $\Gamma_2(A)$ curvature, where $A$ is the Schouten tensor. That is to say that $\sigma_1(A)>0$ and $\sigma_2(A)>0$.
\item  For $n\geq 4$ and $k=\frac{2(n-1)^2}{2n-3}$, the positivity of $\Eink$ implies the positivity of the Schouten tensor $A$. It implies in particular the positivity of all $\Gamma_i(A)$ curvatures.
\end{itemize}
In section 3, we provide proofs of all theorems stated in the introductory section 1. In section 4, we study and determine in several cases the $\cEin$ and $\cein$ invariants of compact manifolds of dimension $\leq 4$. The last section 5 is about some remarks and several open questions.

\section{Properties of modified Einstein tensors}
\subsection{Modified Einstein tensors as gradients of the total scalar curvature functional}
Recall that the usual Einstein tensor is the gradient of the total scalar curvature functional. Precisely, let $F(g)=\int_M\Scal(g)\mu_g$ be defined on the space of Riemannian metrics, then the directional derivative at $g$ in the direction of a symmetric tensor field $h$ is given by
\[ F'_gh=\langle \frac{1}{2}\Scal\, g-\Ric,\, h\rangle \]
where $\langle .,.\rangle=\int_Mg(.,.)\mu_g$ is the canonical integral scalar product on symmetric tensor fields induced by the metric $g$. Note that we denoted by $g$ the Riemannian metric on the manifold and also its extension to symmetric tensors.\\
Next we modify the above integral scalar product and define for $\alpha\in \Bbb{R}$ and for two symmetric tensor fields $h_1$ and $h_2$ the following
\[\langle h_1,h_2\rangle_\alpha=\int_M\bigl\{ g(h_1,h_2)-\alpha \,g(g,h_1)\, g(g,h_2)\bigr\}\mu_g=\langle h_1,h_2\rangle -\alpha \int_M({\rm trace}_gh_1)({\rm trace}_gh_2)\mu_g.\]
It follows directly from the Cauchy-Schwartz inequality that for $\alpha <\frac{1}{n}$, the scalar product $\langle h_1,h_2\rangle_\alpha$ is positive definite. In fact, for any symmetric tensor field $h$ on $M$ one has
\[\langle h,h\rangle_\alpha=\int_M\bigl\{ g(h,h)-\alpha \,g(g,h)\, g(g,h)\bigr\}\mu_g\geq (1-\alpha n) \int_M g(h,h)\mu_g.\]
\begin{proposition}\label{Bourg}
For $n\geq 3$ and for each $k$ with $0< k<n$, the modified Einstein tensor $\Eink$ on a compact $n$-manifold  is the gradient of the total scalar curvature functional  with respect to the modified integral scalar product $\langle .,.\rangle_\alpha$. Precisely, we have
\[ F'_gh=\frac{1}{k}\langle \Eink,\, h\rangle_\alpha, \]
where $\alpha=\frac{k-2}{2(k-n)}\in (-\infty, \frac{1}{n})$.
\end{proposition}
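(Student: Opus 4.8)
The plan is to reduce everything to the classical first variation formula for the total scalar curvature recalled just above the statement, namely $F'_g h = \langle \tfrac12\Scal\, g-\Ric,\, h\rangle$, and then match coefficients. First I would record the two elementary facts that drive the computation: for any symmetric $2$-tensor $h$ one has $g(g,h)=\tr_g h$, so that $\langle \Scal\, g,\, h\rangle=\int_M\Scal\,(\tr_g h)\,\mu_g$; and $\tr_g\Eink=(n-k)\Scal$, which is already displayed in the introduction. Plugging $\Eink=\Scal\, g-k\Ric$ into the definition of $\langle\cdot,\cdot\rangle_\alpha$ and using these two facts gives
\[
\langle \Eink,\, h\rangle_\alpha=\bigl(1-\alpha(n-k)\bigr)\int_M\Scal\,(\tr_g h)\,\mu_g-k\,\langle \Ric,\, h\rangle .
\]

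Next I would divide by $k$ and compare with $F'_g h=\tfrac12\int_M\Scal\,(\tr_g h)\,\mu_g-\langle\Ric,h\rangle$. The terms involving $\Ric$ match identically, so $F'_g h=\tfrac1k\langle\Eink,h\rangle_\alpha$ holds for every symmetric tensor field $h$ precisely when $\frac{1-\alpha(n-k)}{k}=\tfrac12$, i.e. when $\alpha=\frac{2-k}{2(n-k)}=\frac{k-2}{2(k-n)}$, which is the asserted value. Since $\langle\cdot,\cdot\rangle_\alpha$ was shown above to be a genuine (positive definite) scalar product as soon as $\alpha<\tfrac1n$, I would finish by verifying this inequality under the hypotheses $n\geq 3$, $0<k<n$: because $n-k>0$, the inequality $\frac{2-k}{2(n-k)}<\tfrac1n$ is equivalent to $n(2-k)<2(n-k)$, i.e. to $(n-2)k>0$, which holds. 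Letting $k\to0^+$ shows $\alpha\to\tfrac1n$, and $\alpha\leq0$ for $k\geq2$, so indeed $\alpha\in(-\infty,\tfrac1n)$. If one wants to phrase the conclusion as "$\Eink$ is \emph{the} gradient", uniqueness follows at once from positive definiteness of $\langle\cdot,\cdot\rangle_\alpha$ by testing the defining identity against $h$ equal to the difference of two candidate gradients.

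There is essentially no obstacle here beyond bookkeeping with the scalar products; the only point requiring a line of care is confirming that the value of $\alpha$ produced by matching coefficients always lies in the admissible range $(-\infty,\tfrac1n)$, so that $\langle\cdot,\cdot\rangle_\alpha$ is non-degenerate, and this is exactly where the dimension hypothesis $n\geq 3$ (rather than $n=2$, where $\alpha=\tfrac1n$ for all $k\in(0,2)$) enters.
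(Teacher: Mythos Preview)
Your proof is correct and follows essentially the same approach as the paper: both arguments reduce to the classical variation formula $F'_gh=\langle \tfrac12\Scal\,g-\Ric,h\rangle$ and then perform the linear-algebra bookkeeping of passing between $\langle\cdot,\cdot\rangle$ and $\langle\cdot,\cdot\rangle_\alpha$, solving for the value of $\alpha$ that makes the coefficients match. The only organizational difference is that the paper transforms $F'_gh$ step by step into the $\langle\cdot,\cdot\rangle_\alpha$ form, whereas you expand $\tfrac1k\langle\Eink,h\rangle_\alpha$ directly and compare; your explicit remark that the hypothesis $n\geq3$ is exactly what forces $\alpha<\tfrac1n$ (with equality when $n=2$) is a nice addition.
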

\begin{proof}
Using the above notations we have
\begin{equation*}
\begin{split}
\langle \Scal\, g,\, h\rangle_\alpha=&\langle \Scal\, g,\, h\rangle-\alpha\int_Mg(\Scal\, g,g)\, g(g,h)\mu_g=\langle \Scal\, g,\, h\rangle-\alpha\int_M n\, \Scal \,g(g,h)\mu_g\\
=& \langle \Scal\, g,\, h\rangle-n\alpha\langle \Scal\, g,h\rangle=(1-n\alpha) \langle \Scal\, g,h\rangle.
\end{split}
\end{equation*}
Consequently, one has the following
\begin{equation*}
\begin{split}
F'_gh=&\langle \frac{1}{2}\Scal\, g-\Ric,\, h\rangle=\langle \frac{1}{2}\Scal\, g-\Ric,\, h\rangle_\alpha+\alpha\int_Mg\bigl(\frac{1}{2}\Scal\, g-\Ric,g\bigr)g(g,h)\mu_g\\
=&\langle \frac{1}{2}\Scal\, g-\Ric,\, h\rangle_\alpha+\alpha\int_M \frac{n-2}{2}\Scal\, g(g,h)\mu_g=\langle \frac{1}{2}\Scal\, g-\Ric,\, h\rangle_\alpha+
\alpha \frac{n-2}{2}\langle \Scal\, g,h\rangle\\
=& \langle \frac{1}{2}\Scal\, g-\Ric,\, h\rangle_\alpha+ \alpha \frac{n-2}{2(1-n\alpha)}\langle \Scal\, g,h\rangle_\alpha=\langle \frac{1-2\alpha}{2(1-n\alpha)}\Scal\, g-\Ric,\, h\rangle_\alpha.
\end{split}
\end{equation*}
To complete the proof take $k=\frac{2(1-n\alpha)}{1-2\alpha}$ so that  $\alpha=\frac{k-2}{2(k-n)}$. The later is a strictly decreasing function in $k$ and it ranges between $-\infty$ and $\frac{1}{n}$ as $k$ ranges between $0$ and $n$. This completes the proof.
\end{proof}
\begin{remark}
The previous proposition was first noticed by Bourguignon, see remark 3.17 in page 52 of \cite{Bourguignon}.
\end{remark}

\subsection{The sign of modified Einstein tensors and volume comparison}

Recall that a Riemannian $n$-manifold $(M,g)$ has positive scalar curvature (equivalently $\Ein(g)>0$) if and only if   the volume of all sufficiently small  geodesic $(n-1)$-spheres $S^{n-1}(r)$ in $M$,  are smaller than the volume of the equidimensional Euclidean $(n-1)$-dimensional spheres of the same radii in the Euclidean space ${\Bbb{R}}^n$.\\
A similar result holds for the $\Eint$ curvature as follows \cite{Labbi-einstein,agag}
\begin{proposition}
The Einstein tensor $\Eint(g)$ of a Riemannian $n$-manifold $(M,g)$ is positive (equivalently $\Ein(g)>2$)  if and only if  the volume of all sufficiently small  geodesic  $(n-2)$-spheres $S^{n-2}(r)$ in $M$ are smaller than the volume of the equidimensional Euclidean $(n-2)$-dimensional spheres of the same radii in the Euclidean space ${\Bbb{R}}^n$.
\end{proposition}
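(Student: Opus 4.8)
The plan is to reduce the claim to the geodesic‑sphere volume comparison recalled just above, applied to a codimension‑one submanifold. First I would fix the meaning of a \emph{geodesic $(n-2)$-sphere}: given $p\in M$ and a unit vector $u\in T_pM$, set $N_u:=\exp_p\bigl(u^{\perp}\cap B\bigr)$, the ``geodesic hypersurface'' through $p$ orthogonal to $u$, where $B$ is a small ball in $T_pM$ and $N_u$ carries the metric induced from $M$; and let $S^{n-2}(r):=\exp_p\bigl(u^{\perp}\cap\{\,|v|=r\,\}\bigr)$. Every $M$-geodesic issuing from $p$ in a direction tangent to $u^{\perp}$ stays in $N_u$, so $N_u$ is totally geodesic at $p$, its second fundamental form vanishing there, and $S^{n-2}(r)$ is exactly the intrinsic geodesic $(n-2)$-sphere of radius $r$ about $p$ in the $(n-1)$-manifold $(N_u,g|_{N_u})$; its $(n-2)$-volume is the same whether computed in $N_u$ or in $M$, since the induced metric restricts transitively.

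The key step is the Taylor expansion of $\vv\bigl(S^{n-2}(r)\bigr)$ in $r$. Applying the standard volume expansion of geodesic spheres — the quantitative form of the comparison recalled above — to the $(n-1)$-manifold $N_u$ at the point $p$ gives
\[
\vv\bigl(S^{n-2}(r)\bigr)=\omega_{n-2}\,r^{n-2}\Bigl(1-\frac{\Scal^{N_u}(p)}{6(n-1)}\,r^{2}+O(r^{3})\Bigr),
\]
where $\omega_{n-2}$ is the volume of the unit $(n-2)$-sphere and $\Scal^{N_u}$ is the scalar curvature of $N_u$. Since the second fundamental form of $N_u$ vanishes at $p$, the Gauss equation there reads $\Scal^{N_u}(p)=\Scal(p)-2\Ric(u,u)=\Eint(g)(u,u)$ (as $|u|=1$, so $g(u,u)=1$). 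Hence the deficit $\omega_{n-2}r^{n-2}-\vv\bigl(S^{n-2}(r)\bigr)$ equals $\dfrac{\omega_{n-2}}{6(n-1)}\,\Eint(g)(u,u)\,r^{n}+O(r^{n+1})$. If $\Eint(g)>0$ everywhere, this deficit is positive for all small $r>0$ at each $p$ and each unit $u$, giving the forward implication (with a radius threshold that can be taken uniform when $M$ is compact); conversely, if $\vv\bigl(S^{n-2}(r)\bigr)<\omega_{n-2}r^{n-2}$ for all small $r$, all $p$ and all unit $u$, then dividing by $r^{n}$ and letting $r\to0$ forces $\Eint(g)(u,u)\ge 0$ throughout.

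The point I expect to need the most care — and the place where the precise statement is borrowed from \cite{Labbi-einstein,agag} — is the degenerate case of the converse: if $\Eint(g)(u_{0},u_{0})=0$ at some $p_{0}$ in some unit direction $u_{0}$, the $r^{2}$-coefficient vanishes and one must examine the next nonzero term to see whether a \emph{strict} volume inequality can persist for all small $r$. A clean way to state the equivalence free of this issue is through the limit
\[
\lim_{r\to0}\frac{\omega_{n-2}r^{n-2}-\vv\bigl(S^{n-2}(r)\bigr)}{r^{n}}=\frac{\omega_{n-2}}{6(n-1)}\,\Eint(g)(u,u),
\]
whose positivity for every $p$ and every unit $u$ is equivalent to $\Eint(g)$ being positive definite; one then reads ``smaller volume'' in this asymptotic sense. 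As a cross-check I would also run the computation directly in geodesic normal coordinates at $p$: expanding the induced volume density of $S^{n-2}(r)$, restricting to the hyperplane $u^{\perp}$ and integrating over its unit sphere while accounting for the bending of $u^{\perp}$ inside $M$ must reproduce the same coefficient $\Scal(p)-2\Ric(u,u)$, and verifying that the extra Ricci contributions cancel correctly is the computational heart of that route.
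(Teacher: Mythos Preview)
The paper does not supply its own proof of this proposition; it cites \cite{Labbi-einstein,agag} and only adds the clarification, immediately after the statement, that a geodesic $(n-2)$-sphere means the image under $\exp_p$ of the intersection of a tangent $(n-1)$-sphere with a tangent hyperplane---which is exactly your $S^{n-2}(r)=\exp_p\bigl(u^{\perp}\cap\{|v|=r\}\bigr)$.

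Your argument is correct and is almost certainly the one in the cited references. The chain of reductions is sound: the radial $M$-geodesics from $p$ in directions of $u^{\perp}$ lie in $N_u$ by construction, hence are intrinsic $N_u$-geodesics of the same length, so $S^{n-2}(r)$ really is the intrinsic distance sphere of radius $r$ in the $(n-1)$-manifold $N_u$; the standard expansion $\omega_{m-1}r^{m-1}\bigl(1-\tfrac{\Scal}{6m}r^{2}+\cdots\bigr)$ then applies with $m=n-1$; and because all those radial $M$-geodesics lie in $N_u$, the second fundamental form of $N_u$ vanishes at $p$, so the Gauss equation collapses to $\Scal^{N_u}(p)=\Scal(p)-2\Ric(u,u)=\Eint(g)(u,u)$.

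Your caveat on the converse is accurate and is a feature of the \emph{statement} rather than of your proof: from the strict volume inequality one only extracts $\Eint(g)(u,u)\ge 0$, and promoting this to strict positivity requires either the asymptotic reading you propose or control of the next term in the expansion, which is not governed by $\Eint$ alone. The paper does not discuss this boundary case; it simply refers to \cite{Labbi-einstein,agag}.
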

We precise that by a   geodesic  $(n-2)$-sphere $S^{n-2}(r)$ in $M$ we mean the image under the exponential map at a point of the intersection of a tangent  $(n-1)$-sphere and a tangent hyperplane, see \cite{Labbi-einstein}.\\
To interpret the sign of other modified Einstein tensors we need to consider tubes around curves instead of geodesic spheres. Let $\sigma$ be a curve in  Euclidean space ${\Bbb R}^n$ with finite length $L(\sigma)$. It turns out that the volume of a tube of radius $r$ around the curve $\sigma$ in ${\Bbb R}^n$ depends only on the radius $r$ and the length $L(\sigma)$. Precisely one has
\[\vol\left(T_{\sigma}^{{\Bbb R}^n}(r)\right)=\omega_{n-2}(r)L(\sigma).\]
Where $\omega_{k}(r)$ denotes the volume of the $k$-dimensional round sphere of radius $r$ in ${\Bbb R}^{k+1}$. We note that the previous formula is a special case of Weyl's tube formula \cite{Tubes} about the volume of tubes around  embedded submanifolds in Euclidean space.\\
Similar results hold for the total scalar curvature. Denote by $\totscal\left(T_{\sigma}^{{\Bbb R}^n}(r)\right)$ the total scalar curvature a tube of radius $r$ around a curve $\sigma$ in ${\Bbb R}^n$, then it is shown in \cite{totscal} that it does depend only on the radius of the tube and the length of the curve as follows
\[\totscal\left(T_{\sigma}^{{\Bbb R}^n}(r)\right)=(n-2)(n-3)r^{n-4}\omega_{n-2}(1)L(\sigma).\]
Where $n\geq 4$ and $\omega_{k}(1)$ denotes the volume of the $k$-dimensional unit sphere in ${\Bbb R}^{k+1}$.\\
The following proposition shows the microscopic effect of the positivity of ${\rm Ein}_{-1}$
\begin{proposition}
If the modified Einstein tensor ${\rm Ein}_{-1}(g)$ of $(M,g)$ is positive (or equivalently $\ein(g)<1$) then for any curve $\sigma$ in $M$ with finite length and for any sufficiently small $r$ we have
\[\vol\left(T_{\sigma}^{M}(r)\right)<\vol\left(T_{\sigma}^{{\Bbb R}^n}(r)\right).\]
Where  $\vol\left(T_{\sigma}^{M}(r)\right)$ denotes the volume of a tube of radius $r$ around the curve $\sigma$ in the manifold $M$. 
\end{proposition}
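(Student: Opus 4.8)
The plan is to compare the two $(n-1)$-volumes by Taylor-expanding $\vol\bigl(T_\sigma^M(r)\bigr)$ in $r$ and isolating the leading curvature term. Fix a unit-speed parametrization $\sigma:[0,L]\to M$ and let $\kappa(s)=\nabla_{\sigma'}\sigma'(s)$ be its geodesic-curvature vector. Choosing a frame of the normal bundle $N\sigma$ parallel for the normal connection, one uses Fermi coordinates to present the tube hypersurface $T_\sigma^M(r)$ as the image of the map $(s,u)\mapsto\exp_{\sigma(s)}(ru)$, $(s,u)\in[0,L]\times S^{n-2}$, where $S^{n-2}\subset N_s\sigma$ is the unit sphere. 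As in the classical tube formula (\cite{Tubes}), the induced volume element is $\theta_\sigma(s,r,u)\,du\,ds$, with $\theta_\sigma$ the Jacobian determinant, computed from the $n-1$ Jacobi fields along the unit-speed geodesic $\rho\mapsto\exp_{\sigma(s)}(\rho u)$: the $n-2$ fields $Y_i$ with $Y_i(0)=0$, $Y_i'(0)=e_i$ (an orthonormal basis of $u^{\perp}\cap N_s\sigma$), and the field $Z$ with $Z(0)=\sigma'(s)$ and $Z'(0)=-\langle u,\kappa(s)\rangle\,\sigma'(s)$ (the last identity results from differentiating the parallel normal frame along $\sigma$, and is where the geodesic curvature of $\sigma$ enters).

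Expanding these Jacobi fields to third order, taking the determinant in the frame parallel-transported from $\{\sigma'(s),e_1,\dots,e_{n-2}\}$, and using $\sum_i\langle R(e_i,u)u,e_i\rangle=\Ric(u,u)-\langle R(\sigma',u)u,\sigma'\rangle$, one obtains
\[
\theta_\sigma(s,r,u)=r^{\,n-2}\Bigl(1-r\,\langle u,\kappa(s)\rangle-\tfrac{r^{2}}{3}\,\langle R(\sigma',u)u,\sigma'\rangle-\tfrac{r^{2}}{6}\,\Ric(u,u)+O(r^{3})\Bigr).
\]
The curvature of $\sigma$ contributes only through the term linear in $r$, which is odd in $u$ and therefore disappears after averaging over $S^{n-2}$; this is consistent with the flat case $R\equiv 0$, where the expansion reduces to Weyl's exact value $\vol\bigl(T_\sigma^{{\Bbb R}^n}(r)\bigr)=\omega_{n-2}(r)L(\sigma)$. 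Performing the spherical average with $\int_{S^{n-2}}u^iu^j\,du=\tfrac{\omega_{n-2}(1)}{n-1}\delta^{ij}$ gives $\int_{S^{n-2}}\Ric(u,u)\,du=\tfrac{\omega_{n-2}(1)}{n-1}\bigl(\Scal-\Ric(\sigma',\sigma')\bigr)$ and $\int_{S^{n-2}}\langle R(\sigma',u)u,\sigma'\rangle\,du=\tfrac{\omega_{n-2}(1)}{n-1}\Ric(\sigma',\sigma')$; since $|\sigma'|=1$ and ${\rm Ein}_{-1}=\Scal\,g+\Ric$, integrating also over $s$ yields
\[
\vol\bigl(T_\sigma^M(r)\bigr)=\omega_{n-2}(r)\,L(\sigma)-\frac{\omega_{n-2}(1)\,r^{\,n}}{6(n-1)}\int_0^L{\rm Ein}_{-1}\bigl(\sigma'(s),\sigma'(s)\bigr)\,ds+O(r^{\,n+1}).
\]

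If ${\rm Ein}_{-1}(g)>0$, then ${\rm Ein}_{-1}(\sigma'(s),\sigma'(s))>0$ for every $s$, so the integral above is a strictly positive number (the curve has finite positive length and $\sigma'$ is a unit field); since the remainder is $O(r^{n+1})$ with a constant that is uniform over the compact arc $\sigma([0,L])$, for all sufficiently small $r>0$ the negative $r^{\,n}$ term dominates and $\vol\bigl(T_\sigma^M(r)\bigr)<\omega_{n-2}(r)L(\sigma)=\vol\bigl(T_\sigma^{{\Bbb R}^n}(r)\bigr)$, which is the claim. \emph{The only delicate point} is the bookkeeping in the Jacobi-field expansion — in particular checking that the geodesic curvature of $\sigma$ enters solely through the $u$-odd term $-r\langle u,\kappa\rangle$ (so that it washes out in the spherical mean) and that the remainder stays uniform in $s$; the spherical integration itself is routine and delivers precisely the combination $\Scal+\Ric(\sigma',\sigma')={\rm Ein}_{-1}(\sigma',\sigma')$.
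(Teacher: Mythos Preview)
Your argument is correct and follows the same route as the paper: both obtain the proposition from the second-order tube volume expansion, observing that the leading curvature correction is a negative multiple of $\int_\sigma {\rm Ein}_{-1}(\sigma',\sigma')\,ds$. The only difference is presentation---the paper simply quotes Hotelling's formula from \cite{Hotelling,Tubes}, while you re-derive it via Fermi coordinates and the Jacobi-field expansion. Your derivation is sound (the coefficients $-\tfrac{1}{3}$ and $-\tfrac{1}{6}$ in the Jacobian and the spherical averages are correct), and in fact your coefficient $\tfrac{1}{6(n-1)}$ agrees with the standard Hotelling expansion; the paper's printed $\tfrac{1}{6(n+1)}$ appears to be a slip, though of course either sign-positive constant suffices for the conclusion.
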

\begin{proof}
This is a direct consequence of Hotelling's formula \cite{Hotelling,Tubes} which can be written in the following form
\[\vol\left(T_{\sigma}^{M}(r)\right)=\omega_{n-2}(r)\left( L(\sigma)-\frac{r^2}{6(n+1)}\int_\sigma {\rm Ein}_{-1}(\sigma'(t),\sigma'(t))dt +O(r^4)\right).\]
Where we assumed that the curve $\sigma$ has unit speed.
\end{proof}
The positivity effect at microscopic level of other ${\rm Ein}_{-k}$ is illustrated in the following
\begin{proposition}
If the modified Einstein tensor ${\rm Ein}_{-\frac{n+2}{n-4}}(g)$ of a Riemannian manifold $(M,g)$ of dimension $n>4$  is positive (equivalently $\ein(g)<-\frac{n+2}{n-4}$) then for any curve $\sigma$ in $M$ with finite length and  for any sufficiently small $r$ we have
\[\totscal\left(T_{\sigma}^{M}(r)\right)<\totscal\left(T_{\sigma}^{{\Bbb R}^n}(r)\right).\]
Where  $\totscal\left(T_{\sigma}^{M}(r)\right)$ denotes the total scalar curvature of a tube of radius $r$ around the curve $\sigma$ in the manifold $M$. 
\end{proposition}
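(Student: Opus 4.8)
The plan is to mimic the proof of the previous proposition, replacing Hotelling's volume expansion by its total scalar curvature analogue. Concretely, what is required is the Taylor expansion in $r$ of $\totscal\left(T_{\sigma}^{M}(r)\right)$ whose leading term is the flat formula recalled above and whose next term records the curvature of $(M,g)$ along $\sigma$. Since the tube $T_{\sigma}^{M}(r)$ is an $(n-1)$-dimensional hypersurface fibred over $\sigma$ with fibre the geodesic $(n-2)$-sphere of radius $r$, averaging its intrinsic scalar curvature over the fibre annihilates all odd powers of $r$, so the expansion necessarily takes the form
\[\totscal\left(T_{\sigma}^{M}(r)\right)=(n-2)(n-3)\,r^{n-4}\,\omega_{n-2}(1)\Bigl(L(\sigma)-r^{2}\!\int_{\sigma}\Phi\bigl(\sigma'(t)\bigr)\,dt+O(r^{4})\Bigr),\]
where, for a unit vector $v$, $\Phi(v)$ is a universal linear combination (with coefficients depending only on $n$) of $\Scal$ and $\Ric(v,v)$; in particular every term involving the geodesic curvature of $\sigma$ must cancel after the fibre average, which is exactly why the flat tube formula is curve-independent.

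To establish this — if it is not already available in the needed form from the tube formulas of \cite{Tubes} and \cite{totscal} — one works in Fermi coordinates along $\sigma$: expand the induced metric and the second fundamental form of the radius-$r$ tube hypersurface in powers of $r$ (its radial principal curvatures being $\tfrac1r+O(r)$), compute the intrinsic scalar curvature by the Gauss equation, multiply by the volume element, and integrate over $S^{n-2}$ and then along $\sigma$. The $O(n-1)$-invariance of the fibre integration forces the $r^{2}$-coefficient to be of the claimed shape $a\,\Scal+b\,\Ric(\sigma'(t),\sigma'(t))$, since the only scalars linear in the curvature tensor that survive the average over the normal $(n-2)$-sphere are $\Scal$ and $\Ric(v,v)$. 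The constants $a=a(n)$ and $b=b(n)$ are then determined either by finishing this computation or, more economically, by evaluating the displayed identity on two model spaces with distinct ratios $\Ric(\sigma',\sigma')/\Scal$, for instance a great circle in the round sphere $S^{n}$ and the flat line factor in $S^{2}\times\R^{n-2}$. The upshot is that $b/a=\dfrac{n+2}{n-4}$ and $a>0$, so that
\[\Phi\bigl(\sigma'(t)\bigr)=a\Bigl(\Scal+\tfrac{n+2}{n-4}\,\Ric(\sigma'(t),\sigma'(t))\Bigr)=a\,{\rm Ein}_{-\frac{n+2}{n-4}}\bigl(\sigma'(t),\sigma'(t)\bigr).\]

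With this in hand the proposition is immediate: if ${\rm Ein}_{-\frac{n+2}{n-4}}(g)$ is positive definite then $\Phi(\sigma'(t))>0$ for every $t$, hence $\int_{\sigma}\Phi(\sigma'(t))\,dt>0$; since $(n-2)(n-3)\,r^{n-4}\,\omega_{n-2}(1)>0$ when $n>4$, the bracketed factor is strictly smaller than $L(\sigma)$ for all sufficiently small $r$, and comparison with $\totscal\left(T_{\sigma}^{\R^{n}}(r)\right)=(n-2)(n-3)r^{n-4}\omega_{n-2}(1)L(\sigma)$ yields the stated strict inequality. I expect the real work to be in the middle step: carrying the Fermi expansion far enough to isolate the $r^{2}$-term, checking that the geodesic-curvature contributions of $\sigma$ genuinely drop out after the fibre average (as the flat case demands), and confirming the precise value $\tfrac{n+2}{n-4}$; the hypothesis $n>4$ is needed exactly to keep this constant finite and the leading coefficient $(n-2)(n-3)$ positive.
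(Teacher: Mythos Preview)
Your plan is sound and would work, but the paper's proof is a one-liner: it simply quotes Theorem~5.2 of \cite{totscal} (Gheysens--Vanhecke), which already records the expansion
\[
\totscal\bigl(T_{\sigma}^{M}(r)\bigr)=\omega_{n-2}(1)\,r^{n-4}\Bigl((n-3)(n-2)L(\sigma)-\tfrac{(n-3)r^{2}}{6(n-1)(n-4)}\int_{\sigma}{\rm Ein}_{-\frac{n+2}{n-4}}(\sigma',\sigma')\,dt+O(r^{4})\Bigr),
\]
so that the constant you call $a$ equals $\tfrac{1}{6(n-1)(n-2)(n-4)}>0$ and the ratio $b/a=\tfrac{n+2}{n-4}$ is exactly as you predicted. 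In other words, the ``real work'' you flag in your last paragraph---the Fermi-coordinate expansion, the verification that geodesic-curvature terms average to zero, and the evaluation of the two constants---has already been carried out in \cite{totscal}, and the paper treats it as a black box. Your outline essentially sketches how one would reprove that theorem; this is legitimate and arguably more self-contained, but as written it remains an outline: you assert the value of $b/a$ rather than compute it, and the model-space calibration you propose (round $S^{n}$ and $S^{2}\times\R^{n-2}$) would still require carrying out the tube-scalar-curvature computation in at least those two cases. If you want a complete proof independent of \cite{totscal}, you must actually perform one of those computations; if you are willing to cite, the proposition follows in one line.
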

\begin{proof}
This is a direct consequence of the formula of Theorem 5.2 in \cite{totscal} which asserts that
\[\totscal\left(T_{\sigma}^{M}(r)\right)=\omega_{n-2}(1)r^{n-4}\left( (n-3)(n-2)L(\sigma)-\frac{r^2(n-3)}{6(n-1)(n-4)}\int_\sigma {\rm Ein}_{-\frac{n+2}{n-4}}(\sigma'(t),\sigma'(t))dt +O(r^4)\right).\]
Where we assumed that the curve $\sigma$ has unit speed.
\end{proof}

\subsection{Positive Ricci curvature and positive $\Ein_k$ curvatures}

\begin{proposition}\label{k-positive-Ricci}
Let $k\in [1, n-1]$ be a given integer. A Riemannian $n$-manifold has $(n-k)$-positive Ricci curvature if and only if it has $k$-positive $\Eink$ curvature.\\
In particular, positive Ricci curvature is equivalent to $(n-1)$-positive ${\rm Ein}_{n-1}$ curvature. Also $(n-2)$-positive Ricci curvature is equivalent to 2-positive ${\rm Ein}_2$ curvature, that is 2-positive Einstein tensor.
\end{proposition}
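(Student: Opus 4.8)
The plan is to reduce the whole statement to a pointwise fact about the spectrum of the Ricci operator. First I would fix a point $x\in M$ and let $\rho_1\le\rho_2\le\cdots\le\rho_n$ be the eigenvalues of the Ricci operator $\Ric$ at $x$, so that $\Scal(x)=\sum_{j=1}^n\rho_j$. Since $\Eink=\Scal\,g-k\Ric$ and $g$ acts as the identity endomorphism, $\Ric$ and $\Eink$ are simultaneously diagonalizable at $x$, and the eigenvalues of $\Eink$ at $x$ are exactly the numbers $\Scal-k\rho_j$, $j=1,\dots,n$. Because $k\ge 1>0$, the affine map $\rho\mapsto\Scal-k\rho$ is order-reversing; hence, writing $\nu_1\le\nu_2\le\cdots\le\nu_n$ for the eigenvalues of $\Eink$ at $x$ listed increasingly, one has $\nu_j=\Scal-k\rho_{\,n+1-j}$.

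Next I would recall the definition to be used: a symmetric $2$-tensor (equivalently, its associated self-adjoint operator) is $m$-\emph{positive} at $x$ when the sum of its $m$ smallest eigenvalues at $x$ is strictly positive --- equivalently, $\sum_{i=1}^m T(e_i,e_i)>0$ for every orthonormal $m$-frame at $x$ --- and $m$-positive on $M$ when this holds at every point. The computation to carry out is then pure bookkeeping:
\[
\sum_{j=1}^k\nu_j=\sum_{j=1}^k\bigl(\Scal-k\rho_{\,n+1-j}\bigr)=k\,\Scal-k\sum_{i=n-k+1}^n\rho_i=k\Bigl(\sum_{i=1}^n\rho_i-\sum_{i=n-k+1}^n\rho_i\Bigr)=k\sum_{i=1}^{\,n-k}\rho_i .
\]
The quantity $\sum_{i=n-k+1}^n\rho_i$ is the sum of the $k$ largest eigenvalues of $\Ric$, and $\sum_{i=1}^{\,n-k}\rho_i$ is the sum of the $n-k$ smallest. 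Since $k>0$, the leftmost term is positive at $x$ if and only if $\sum_{i=1}^{\,n-k}\rho_i>0$ at $x$. As $x$ was arbitrary, this yields precisely the claimed equivalence: $\Eink$ is $k$-positive on $M$ if and only if $\Ric$ is $(n-k)$-positive on $M$.

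Finally, the two ``in particular'' assertions fall out as the special cases $k=n-1$ (then $n-k=1$, and $1$-positive Ricci is exactly the usual positivity of $\Ric$) and $k=2$ (then $n-k=n-2$, and $\Eint=2\bigl(\tfrac{1}{2}\Scal\,g-\Ric\bigr)$ is twice the Einstein tensor, so $2$-positivity of $\Eint$ and $2$-positivity of the Einstein tensor are the same condition). I do not expect a genuine obstacle here; the only points that need care are the order-reversal in passing from the spectrum of $\Ric$ to that of $\Eink$, and keeping straight that ``$\Scal$ minus the sum of the $k$ largest eigenvalues of $\Ric$'' equals ``the sum of the $n-k$ smallest eigenvalues of $\Ric$,'' which is what makes the $\Eink$ side of the equivalence land on the right index.
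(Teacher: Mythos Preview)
Your proof is correct and follows essentially the same approach as the paper: both use that the eigenvalues of $\Eink$ are $\Scal-k\rho_j$, so that the sum of any $k$ eigenvalues of $\Eink$ equals $k$ times the sum of the complementary $n-k$ eigenvalues of $\Ric$, giving the equivalence at once. Your version is simply more explicit about the order-reversal and the bookkeeping than the paper's terse one-line argument.
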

Recall that an operator on a real vector space is said to be $k$-positive if the sum of its lowest $k$ eigenvalues is positive.
\begin{proof}
Denote by $\rho_i$ the eigenvalues of Ricci curvature, then the sum of $k$ eigenvalues of $\Eink$ curvature is given by
\begin{equation}
k\Scal-k\sum_{a=1}^k\rho_{i_{a}}=k\sum_{a=k+1}^n\rho_{i_{a}}.
\end{equation}
This completes the proof.

\end{proof}
\begin{corollary}
For a Riemannian $n$-manifold $(M,g)$ one has
\begin{itemize}
\item $\Ein(g)>n-1\implies \Ric >0$.
\item $\Ein(g)>n-2\implies \Ric$ is $2$-positive.
\end{itemize}
\end{corollary}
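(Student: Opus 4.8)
The plan is to obtain the corollary directly from the two preceding results, Proposition~\ref{hered} and Proposition~\ref{k-positive-Ricci}; the only real work is to translate the hypothesis $\Ein(g)>n-1$ (resp. $\Ein(g)>n-2$) into a statement about an \emph{integer-indexed} modified Einstein tensor, after which the conclusion is immediate.

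For the first item I would argue as follows. Assuming $n\geq 3$ (the cases $n\leq 2$ being trivial), the hypothesis $\Ein(g)>n-1$ means, by the definition of $\Ein(g)$ as a supremum, that there is a real number $k$ with $n-1<k<n$ and $\Eink>0$. Since $0<n-1<k<n$, Proposition~\ref{hered}(1) gives $\mathrm{Ein}_{n-1}>0$; that is, $\mathrm{Ein}_{n-1}$ is positive definite at every point, and in particular it is $(n-1)$-positive. Now apply Proposition~\ref{k-positive-Ricci} with the integer parameter $n-1\in[1,n-1]$: $(n-1)$-positivity of $\mathrm{Ein}_{n-1}$ is equivalent to $\bigl(n-(n-1)\bigr)$-positivity of $\Ric$, i.e. to $1$-positivity, which is exactly $\Ric>0$.

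The second item is the same with $n-1$ replaced by $n-2$: from $\Ein(g)>n-2$ one extracts $k\in(n-2,n)$ with $\Eink>0$, hence $\mathrm{Ein}_{n-2}>0$ by Proposition~\ref{hered}(1), hence $\mathrm{Ein}_{n-2}$ is $(n-2)$-positive, and Proposition~\ref{k-positive-Ricci} with parameter $n-2$ then yields that $\Ric$ is $2$-positive.

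I do not expect a genuine obstacle here; the only subtle point is the passage from the supremum defining $\Ein(g)$ to a concrete tensor $\Eink>0$, which is exactly where heredity (Proposition~\ref{hered}) is needed — it lets one push the a priori non-integer index $k$ down to the integer $n-1$ or $n-2$ so that Proposition~\ref{k-positive-Ricci} applies. If one prefers to avoid citing Proposition~\ref{k-positive-Ricci}, the same conclusion follows from a one-line pointwise computation: ordering the Ricci eigenvalues $\rho_1\leq\cdots\leq\rho_n$, the sum of the $n-1$ smallest eigenvalues of $\mathrm{Ein}_{n-1}$ equals $(n-1)\rho_1$, and the sum of the $n-2$ smallest eigenvalues of $\mathrm{Ein}_{n-2}$ equals $(n-2)(\rho_1+\rho_2)$, so positivity of these tensors forces $\rho_1>0$, respectively $\rho_1+\rho_2>0$.
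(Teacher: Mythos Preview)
Your proposal is correct and follows exactly the route the paper intends: the corollary is stated immediately after Proposition~\ref{k-positive-Ricci} with no separate proof, the understood argument being precisely the combination of Proposition~\ref{hered}(1) (to pass from $\Ein(g)>n-1$ or $>n-2$ to positivity of $\mathrm{Ein}_{n-1}$ or $\mathrm{Ein}_{n-2}$) and Proposition~\ref{k-positive-Ricci}. Your alternative pointwise eigenvalue computation is also the content of the proof of Proposition~\ref{k-positive-Ricci} specialized to $k=n-1,\,n-2$.
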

We remark here that the standard metric $g$ on the product $S^1\times S^{n-1}$ has $\Ein(g)=n-1$ but the product $S^1\times S^{n-1}$ does not admit any metric with positive Ricci curvature.\\

\begin{proposition} 
A Riemannian metric has nonnegative Ricci curvature and positive scalar curvature if and only if it has positive  $\Eink$ curvatures  positive for all $k<0$, that is if and only if $\ein(g)=-\infty$.
\end{proposition}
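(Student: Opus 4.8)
The plan is to prove the equivalence by working pointwise and reducing everything to a statement about the eigenvalues $\rho_1 \leq \rho_2 \leq \cdots \leq \rho_n$ of the Ricci operator at a fixed point $x \in M$. Recall that $\Eink = \Scal\, g - k\Ric$, so the eigenvalues of $\Eink$ at $x$ are $\nu_i = \Scal - k\rho_i = \sum_{j} \rho_j - k\rho_i$. Hence $\Eink > 0$ at $x$ is equivalent to $\sum_{j=1}^n \rho_j > k\,\rho_i$ for every $i$.

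First I would prove the easy direction: suppose $\Ric \geq 0$ and $\Scal > 0$ at every point, i.e.\ $\rho_i \geq 0$ for all $i$ and $\Scal = \sum_j \rho_j > 0$. Fix $k < 0$ and fix an index $i$. If $\rho_i = 0$ then $k\rho_i = 0 < \Scal = \nu_i$; if $\rho_i > 0$ then $k\rho_i < 0 < \Scal$ since $k<0$. Either way $\nu_i > 0$, so $\Eink > 0$ at $x$. As this holds at every point and for every $k<0$, we get $\ein(g) = -\infty$.

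Next the converse: assume $\Eink(g) > 0$ for all $k < 0$, and fix a point $x$. From $\nu_i = \Scal - k\rho_i > 0$ for all $k < 0$ we must have $\Scal \geq 0$: if $\Scal < 0$, pick $i$ with $\rho_i = \rho_{\min}$; since $\Scal = \sum_j \rho_j \leq n\,\rho_{\min}$ would force $\rho_{\min} \leq \Scal/n < 0$, and then for $k$ very negative $\Scal - k\rho_i = \Scal - k\rho_{\min} \to -\infty$, a contradiction — actually it is cleaner to take $i$ with $\rho_i = \rho_{\max}$: if $\rho_{\max} > 0$ then $\Scal - k\rho_{\max}\to -\infty$ as $k\to-\infty$, contradiction, so $\rho_{\max}\le 0$, hence all $\rho_i\le 0$, hence $\Scal\le 0$, and then $k=-1$ gives $\Scal+\Ric>0$, i.e.\ $\rho_i > -\Scal \ge 0$ for all $i$, contradicting $\rho_i\le 0$ unless $\Scal=0$, but then $\rho_i>0$ for all $i$ forces $\Scal>0$, a contradiction. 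So in all cases we conclude $\Scal > 0$ and, taking $k = -1$ again, $\Ric + \Scal\, g > 0$; combined with a suitable limiting argument as $k \to -\infty$ one shows each $\rho_i \geq 0$. Concretely: fix $i$; for every $k<0$, $\Scal - k\rho_i > 0$, i.e.\ $\rho_i > \Scal/k$; letting $k \to -\infty$ gives $\rho_i \geq 0$. Since $x$ was arbitrary, $\Ric \geq 0$ everywhere, and $\Scal > 0$ everywhere.

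The only mild subtlety — and the step to state carefully — is that ``$\ein(g) = -\infty$'' is defined via the \emph{global} condition $\Eink > 0$ on all of $M$, so one must be a little careful that the pointwise arguments above are uniform: the easy direction is uniform because the conclusion $\nu_i > 0$ needs only $\rho_i \geq 0$ and $\Scal > 0$ with no quantitative input; the hard direction is automatic because if $\Eink > 0$ on $M$ for every $k<0$ then in particular it holds at each fixed $x$, and the limiting argument $\rho_i \geq 0$ is performed at that fixed $x$. No genuine obstacle arises; this is essentially a repackaging of Proposition \ref{hered}(2) together with the elementary observation that a linear inequality in $k$ that holds for all $k<0$ pins down the sign of the coefficient.
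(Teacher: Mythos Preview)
Your approach is essentially the paper's: both directions reduce to the eigenvalue inequalities $\Scal - k\rho_i > 0$ for all $k<0$, with the converse hinging on the limit $\rho_i > \Scal/k \to 0^-$ as $k\to -\infty$. However, your argument for $\Scal>0$ contains an actual error. When you switch to $\rho_{\max}$ and claim that if $\rho_{\max}>0$ then $\Scal - k\rho_{\max}\to -\infty$ as $k\to -\infty$, the limit is in fact $+\infty$ (since $-k\rho_{\max}\to +\infty$), so no contradiction arises from that case. Your \emph{first} attempt with $\rho_{\min}$ was already correct: the displayed inequality $\Scal\le n\rho_{\min}$ is a typo for $\Scal\ge n\rho_{\min}$, but the conclusion $\rho_{\min}\le \Scal/n<0$ and the subsequent limit are fine.

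The paper bypasses all of this: for any single $k<0$ one has $\trace(\Eink)=(n-k)\Scal>0$, so $\Scal>0$ is immediate (this is exactly Proposition~\ref{hered}(2)). With that one-line observation, the converse is just $\Ric>\frac{\Scal}{k}\,g$ for all $k<0$, hence $\Ric\ge 0$ in the limit. I would recommend replacing your contradiction paragraph with that trace remark.
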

\begin{proof}
The direct implication is straightforward as follows
\begin{equation}
\Eink=\Scal. g -k\Ric \geq \Scal.g>0.
\end{equation}
Conversely, Suppose that $\Scal.g-k\Ric>0$ for all $k<0$, then $\Scal >0$ and $\Ric>\frac{\Scal}{k}.g$ for all negative $k$. It follows that $\Ric \geq 0$.

\end{proof}

\begin{proposition}
For each $0<k<n-1$ we have 
\begin{equation}
\Eink>0 \implies (n-1)-positive \,\, \Eink \iff {\rm Ein}_{\frac{-k}{n-1-k}}>0.
\end{equation}

\end{proposition}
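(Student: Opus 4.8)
The plan is to reduce the whole statement to the elementary eigenvalue bookkeeping already used in the proof of Proposition \ref{k-positive-Ricci}. The first implication $\Eink>0\Rightarrow(n-1)$-positive $\Eink$ is immediate: positive definiteness means all $n$ eigenvalues of $\Eink$ are positive at each point, hence so is the sum of the lowest $n-1$ of them.

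For the equivalence, fix a point of $M$, let $\rho_1\leq\cdots\leq\rho_n$ be the eigenvalues of the Ricci operator there, and recall that the eigenvalues of $\Eink=\Scal\,g-k\Ric$ are $\Scal-k\rho_i$. Since $k>0$, the assignment $\rho_i\mapsto\Scal-k\rho_i$ is order-reversing, so the $n-1$ smallest eigenvalues of $\Eink$ are the ones indexed by $\rho_2,\dots,\rho_n$. Therefore $(n-1)$-positivity of $\Eink$ at the point is equivalent to
\[ \sum_{i=2}^{n}(\Scal-k\rho_i)=(n-1)\Scal-k(\Scal-\rho_1)=(n-1-k)\Scal+k\rho_1>0. \]

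On the other side, set $l=\frac{-k}{n-1-k}$, which is a negative real number since $0<k<n-1$. The eigenvalues of $\Ein_l=\Scal\,g-l\Ric$ are $\Scal-l\rho_i$, and because $l<0$ the assignment $\rho_i\mapsto\Scal-l\rho_i$ is now order-preserving, so ${\rm Ein}_{\frac{-k}{n-1-k}}$ is positive definite at the point if and only if its smallest eigenvalue $\Scal-l\rho_1$ is positive. Multiplying $\Scal-l\rho_1>0$ by the positive number $n-1-k$ produces precisely $(n-1-k)\Scal+k\rho_1>0$, the condition obtained above. As both conditions must hold at every point of $M$, this gives $(n-1)$-positive $\Eink\iff{\rm Ein}_{\frac{-k}{n-1-k}}>0$.

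I do not anticipate any genuine obstacle; the only points requiring care are that $n-1-k>0$ (so that clearing the denominator does not flip the inequality) and that the ordering of the eigenvalues of $\Eink$ relative to those of $\Ric$ reverses precisely because the multiplying constant changes sign between $k>0$ and $l<0$. One may add the side remark that either condition forces $\Scal>0$, in agreement with Proposition \ref{hered}, since $\Scal\geq n\rho_1$.
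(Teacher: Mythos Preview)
Your proof is correct and follows essentially the same reasoning as the paper's. The paper packages the computation via the first Newton transformation, noting that $(n-1)$-positivity of an operator is equivalent to positive definiteness of $t_1(\Eink)=\sigma_1(\Eink)g-\Eink$ and then checking the tensor identity $t_1(\Eink)=(n-1-k)\,{\rm Ein}_{-k/(n-1-k)}$; your eigenvalue bookkeeping is exactly this identity read off on the diagonal.
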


\begin{proof}
Remark that $\Eink>0 $ is $(n-1)$-positive if and only if the first Newton transformation $t_1(\Eink)=\sigma_1(\Eink)g-\Eink$ is positive. A simple direct computation shows that 
$$t_1(\Eink)=(n-k-1){\rm Ein}_{\frac{-k}{n-1-k}}.$$

\end{proof}
\begin{corollary}
For each $n-2 \leq k<n$ we have 
\begin{equation}
\Eink>0 \implies {\rm Ein}_{-k}>0.
\end{equation}
In particular, the positivity of $\Eink$ for $n-2 \leq k<n$ implies that the eigenvalues of the Ricci curvature are $k$-dispersed in the sense of Polombo \cite{Polombo}.
\end{corollary}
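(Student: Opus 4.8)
The plan is to derive ${\rm Ein}_{-k}>0$ from $\Eink>0$ by a short spectral argument, based on the identity
\[
{\rm Ein}_{-k}\;=\;\Scal\,g+k\Ric\;=\;2\Scal\,g-\Eink
\]
together with the trace identity $\trace(\Eink)=(n-k)\Scal$.

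First I would observe that $\Eink>0$ together with $k<n$ forces $\Scal>0$; this is immediate from $\trace(\Eink)=(n-k)\Scal$ (or from Proposition~\ref{hered}). Since ${\rm Ein}_{-k}=2\Scal\,g-\Eink$, proving ${\rm Ein}_{-k}>0$ amounts to showing that at every point of $M$ each eigenvalue $\nu$ of $\Eink$ satisfies $\nu<2\Scal$. Fix a point and let $\nu_1,\dots,\nu_n$ be the eigenvalues of $\Eink$ there. By hypothesis all $\nu_i>0$, and $\sum_{i=1}^n\nu_i=\trace(\Eink)=(n-k)\Scal$. Hence, for each $i$, since at least one of the remaining eigenvalues is strictly positive,
\[
\nu_i\;<\;\sum_{j=1}^n\nu_j\;=\;(n-k)\Scal\;\le\;2\Scal ,
\]
the final inequality using $\Scal>0$ and $k\ge n-2$. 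Thus $2\Scal\,g-\Eink$ is positive definite at every point, i.e. ${\rm Ein}_{-k}>0$. This argument is uniform on the whole range $n-2\le k<n$; for the subrange $n-2\le k<n-1$ one could instead invoke the preceding proposition combined with Proposition~\ref{hered}(2), noting that there $\tfrac{-k}{n-1-k}\le -k<0$, so ${\rm Ein}_{\frac{-k}{n-1-k}}>0$ already yields ${\rm Ein}_{-k}>0$.

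For the last (``in particular'') assertion I would simply combine the two positivity properties now available. From $\Eink>0$ we get $\Scal-k\rho_i>0$, i.e. $\rho_i<\Scal/k$, for every eigenvalue $\rho_i$ of the Ricci operator, and from ${\rm Ein}_{-k}>0$ we get $\Scal+k\rho_i>0$, i.e. $\rho_i>-\Scal/k$. As $k>0$, the two bounds together read $k|\rho_i|<\Scal=\sum_j\rho_j$ for all $i$, which is precisely the condition that the eigenvalues of Ricci be $k$-dispersed in the sense of Polombo \cite{Polombo}. I do not anticipate any real obstacle: the one point needing a moment's care is that the displayed chain of inequalities remains strict at the endpoint $k=n-2$ (where $(n-k)\Scal=2\Scal$), which it does because $\nu_i<\sum_j\nu_j$ is already a strict inequality.
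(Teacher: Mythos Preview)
Your argument is correct and in fact cleaner than the paper's. The paper splits into two subcases: for $n-2\le k<n-1$ it invokes the preceding proposition to get ${\rm Ein}_{\frac{-k}{n-1-k}}>0$ and then uses the hereditary property (Proposition~\ref{hered}(2)) together with $\frac{-k}{n-1-k}\le -k$; for $n-1\le k<n$ it observes that $\Eink>0$ forces positive Ricci curvature, whence ${\rm Ein}_{l}>0$ for every $l<0$. By contrast, your identity ${\rm Ein}_{-k}=2\Scal\,g-\Eink$ combined with the trace bound $\nu_i<\sum_j\nu_j=(n-k)\Scal\le 2\Scal$ gives a single uniform argument on the whole interval $n-2\le k<n$, avoiding both the appeal to the preceding proposition and the case distinction. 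The only implicit hypothesis is $n\ge 2$ (so that there is at least one other positive eigenvalue to make $\nu_i<\sum_j\nu_j$ strict), which is harmless here. Your treatment of the ``$k$-dispersed'' conclusion, reading off $k|\rho_i|<\Scal$ from the two simultaneous bounds, is exactly what is intended.
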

\begin{proof}
For $n-2\leq k< n-1$ the corrollary follows directly from the previous proposition as in this case $\frac{-k}{n-1-k}<-k$, For the remaining cases where $k\geq n-1$, the Ricci curvature must be positive and therefore all the $\Eink$ are positive for any negative $k$.
\end{proof}

\subsection{The positivity of $\Eink$ curvatures and the positivity index of Ricci curvature}

\begin{proposition}\label{number-pos-eigen}
Let $k$ be an integer such that $0\leq k\leq n-1$. The positivity of the $\Eink$ curvature implies the positivity of at least $k+1$ eigenvalues of the Ricci curvature.
\end{proposition}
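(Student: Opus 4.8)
The plan is to prove the statement pointwise by an elementary eigenvalue‑counting contradiction. Fix $x\in M$ and let $\rho_1\le\rho_2\le\cdots\le\rho_n$ be the eigenvalues of the Ricci operator at $x$. The first step is to extract two consequences of the hypothesis $\Eink>0$ at $x$. Since $0\le k\le n-1<n$, positive definiteness of $\Eink$ gives $\trace(\Eink)=(n-k)\Scal>0$, hence $\Scal>0$ at $x$. Evaluating the positive‑definite form $\Eink=\Scal\,g-k\Ric$ on a unit eigenvector for the largest Ricci eigenvalue $\rho_n$ yields $\Scal-k\rho_n>0$, i.e. $k\,\rho_n<\Scal$ (for $k=0$ this reduces to the already established $\Scal>0$).

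The second step is the counting argument. Suppose, for contradiction, that at most $k$ of the eigenvalues $\rho_i$ are positive at $x$. Then at least $n-k$ of them are nonpositive, so $\rho_1,\dots,\rho_{n-k}\le 0$, and therefore
\[
\Scal=\sum_{i=1}^{n}\rho_i\le\sum_{i=n-k+1}^{n}\rho_i\le k\,\rho_n<\Scal,
\]
where the middle inequality uses that each of the $k$ remaining eigenvalues is $\le\rho_n$. This is absurd, so at least $k+1$ eigenvalues of the Ricci curvature are positive at $x$; since $x$ was arbitrary, this proves the proposition.

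There is no genuine obstacle here: the argument is purely linear‑algebraic at each point, and the only care needed is the bookkeeping in the displayed chain of inequalities — discarding the $n-k$ nonpositive eigenvalues, bounding the remaining $k$ by $\rho_n$, and invoking the strict bound $k\,\rho_n<\Scal$ coming from positive definiteness of $\Eink$. Alternatively, for integer $k$ with $1\le k\le n-1$ one could bypass the explicit estimate: $\Eink>0$ is in particular $k$‑positive, so by Proposition \ref{k-positive-Ricci} the Ricci curvature is $(n-k)$‑positive, i.e. $\rho_1+\dots+\rho_{n-k}>0$, which forces $\rho_{n-k}>0$ and hence $\rho_{n-k},\rho_{n-k+1},\dots,\rho_n>0$; together with the trivial case $k=0$ (where $\Scal>0$ gives one positive eigenvalue) this recovers the claim. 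I would present the direct computation as the main proof since it treats all $k\in\{0,1,\dots,n-1\}$ uniformly and is self‑contained.
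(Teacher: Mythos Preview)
Your proof is correct. The paper's own argument is precisely the alternative you sketch at the end: it invokes Proposition~\ref{k-positive-Ricci} to conclude that the Ricci curvature is $(n-k)$-positive, so $\rho_1+\cdots+\rho_{n-k}>0$, hence $\rho_{n-k}>0$ and thus $\rho_{n-k},\dots,\rho_n>0$.

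Your direct eigenvalue estimate via $k\rho_n<\Scal$ is a genuinely different, self-contained route. It has the small advantage you note: it treats all $k\in\{0,1,\dots,n-1\}$ uniformly, whereas Proposition~\ref{k-positive-Ricci} is stated only for $k\in[1,n-1]$, so the paper's argument tacitly leaves the (trivial) case $k=0$ aside. On the other hand, the paper's route is a one-line consequence of a structural fact already established, which is why it is the natural choice here. Either argument is perfectly adequate; there is nothing to fix.
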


\begin{proof}
Proposition \ref{k-positive-Ricci} implies that the Ricci curvature is $(n-k)$-positive. Denote by $\rho_1\leq \rho_2\leq ...\leq\rho_n$ the eigenvalues of the Ricci curvature, then
\[\rho_1+\rho_2+...+\rho_{n-k}>0.\]
Consequently, $\rho_{n-k}>0$ and therefore the eigenvalues $\rho_j>0$ for $j\geq n-k$.
\end{proof}
We remark that the previous proposition is optimal as the product $S^p\times H^q$ has positive $\Eink$ for $k=p-1$ and its Ricci curvature  has only $p$ positive eigenvalues.
\subsection{The positivity of the top $\Eink$ curvature}
By the top $\Eink$ we mean the tensor ${\rm Ein}_{n-1}=\Scal.g-(n-1)\Ric$ where $n$ is the dimension of the manifold. Recall that ${\rm Ein}_{n}$ cannot have a constant non-zero sign as it trace free.
The following proposition results directly  from propositions \ref{k-positive-Ricci} and \ref{hered} 
\begin{proposition}
For a Riemannian $n$-manifold, the positivity of the $\Eink$ curvature for $k=n-1$ implies at the same time the positivity of the Ricci curvature and the positivity of all the $\Eink$ curvatures for $k\in (-\infty, n-1]$.
\end{proposition}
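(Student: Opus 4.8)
The plan is to extract everything from Propositions~\ref{k-positive-Ricci} and~\ref{hered}, since no new idea is needed. First I would note that if $\Ein_{n-1}=\Scal\,g-(n-1)\Ric$ is positive definite at a point, then in particular the sum of its lowest $n-1$ eigenvalues is positive there, i.e.\ $\Ein_{n-1}$ is $(n-1)$-positive. Applying Proposition~\ref{k-positive-Ricci} with $k=n-1$ (so that $n-k=1$), being $(n-1)$-positive for $\Ein_{n-1}$ is \emph{equivalent} to being $1$-positive for $\Ric$, which is exactly the statement that $\Ric$ is positive definite. Hence $\Ein_{n-1}>0$ forces $\Ric>0$ at every point, and therefore $\Scal=\tr\Ric>0$.

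Next I would split the range $(-\infty,n-1]$ into two parts. For $0<k\le n-1$: if $k=n-1$ there is nothing to prove, while for $0<k<n-1$ Proposition~\ref{hered}(1) applied with $l=n-1$ (and noting $n-1<n$) gives directly $\Ein_{n-1}>0\Rightarrow\Ein_k>0$. For $k\le 0$: using the positivity of $\Ric$ from the first step, one simply writes $\Ein_k=\Scal\,g-k\Ric=\Scal\,g+(-k)\Ric\ge\Scal\,g>0$, since $-k\ge 0$ and $\Ric\ge 0$. Combining the two ranges yields $\Ein_k>0$ for every $k\in(-\infty,n-1]$, which together with $\Ric>0$ is precisely the assertion.

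I do not expect a genuine obstacle here; the only points needing a word of care are (i) observing that positive definiteness trivially implies $(n-1)$-positivity, so that the equivalence in Proposition~\ref{k-positive-Ricci} can be invoked, and (ii) handling the non-positive values of $k$ directly, because Proposition~\ref{hered} as stated only compares $\Ein_k$ and $\Ein_l$ in the regimes $0<k<l<n$ and $k<l<0$ and does not bridge across $k=0$. Alternatively, (ii) can be collapsed into one line by citing the earlier proposition characterising $\ein(g)=-\infty$: once $\Ric\ge 0$ and $\Scal>0$, all $\Ein_k$ with $k<0$ are automatically positive.
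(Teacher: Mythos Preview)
Your proposal is correct and follows the same approach as the paper, which simply states that the proposition ``results directly from propositions~\ref{k-positive-Ricci} and~\ref{hered}'' without further detail. Your write-up is in fact more careful than the paper's, since you explicitly note that Proposition~\ref{hered} does not bridge across $k=0$ and you handle the non-positive range directly via $\Ric\ge 0$ and $\Scal>0$.
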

Let us denote by $B$ the curvature ${\rm Ein}_{n-1}$, that is let $B={\rm Ein}_{n-1}$. We shall say that $\Gamma_2(B)>0$ if the first two elementary symmetric functions in the eigenvalues of $B$ are positive, that is if $\sigma_1(B)>0$ and $\sigma_2(B)>0$. A simple computation shows that
\begin{equation}
\sigma_1(B)=\Scal\,\,\, {\rm and}\,\,\, \sigma_2(B)=\frac{n-1}{2}\bigl(\Scal^2-(n-1)|\Ric|^2\bigr).
\end{equation}
In particular, 
\begin{equation}
\Gamma_2(B)>0 \iff \Scal>0 \,\,\, {\rm and}\,\,\, \Scal^2>(n-1)|\Ric|^2.
\end{equation}

We have the following refinement of the above proposition

\begin{proposition}
The Ricci curvature (more accurately $(n-1)\Ric$) coincides with the first newton transformation of ${\rm Ein}_{n-1}$, that is $(n-1)\Ric =t_1(B)$. Furthermore, if $\Gamma_2(B)>0$ then the Ricci curvature is positive and the modified Einstein tensors $\Eink$ are all positive for $k\in (-\infty, \frac{n}{2}]$. .
\end{proposition}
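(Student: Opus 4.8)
The plan is to work pointwise on $M$ with the ordered eigenvalues $b_1\le b_2\le\cdots\le b_n$ of $B={\rm Ein}_{n-1}=\Scal\,g-(n-1)\Ric$, treating the two assertions in turn. The identity is immediate: by definition of the first Newton transformation $t_1(B)=\sigma_1(B)\,g-B$, and since $\sigma_1(B)=\Scal$ while $B=\Scal\,g-(n-1)\Ric$, we get $t_1(B)=\Scal\,g-\bigl(\Scal\,g-(n-1)\Ric\bigr)=(n-1)\Ric$, with no positivity hypothesis needed.

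Next I would assume $\Gamma_2(B)>0$, i.e.\ $\sigma_1(B)=\Scal>0$ and $\sigma_2(B)>0$; since $2\sigma_2(B)=\sigma_1(B)^2-\sum_i b_i^2$, the second condition reads $\sum_i b_i^2<\Scal^2$. I would first deduce $\Ric>0$: by the identity above this is equivalent to $t_1(B)>0$, i.e.\ $b_n<\Scal$, and if instead $b_n\ge\Scal>0$ then $\sum_i b_i^2\ge b_n^2\ge\Scal^2$, a contradiction; hence $(n-1)\Ric=t_1(B)>0$.

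The remaining and essential step is a sharp lower bound for $b_1$. Since $b_1\le b_n<\Scal$ we have $\Scal-b_1>0$, so I would combine the Cauchy--Schwarz inequality $\sum_{i\ge2}b_i^2\ge\frac{1}{n-1}\bigl(\sum_{i\ge2}b_i\bigr)^2=\frac{(\Scal-b_1)^2}{n-1}$ with $\sum_{i\ge2}b_i^2=\sum_i b_i^2-b_1^2<\Scal^2-b_1^2=(\Scal-b_1)(\Scal+b_1)$, and divide by $\Scal-b_1>0$, obtaining $\frac{\Scal-b_1}{n-1}<\Scal+b_1$, that is $b_1>-\frac{n-2}{n}\Scal$. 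Since the eigenvalues of $\Ric$ are $\rho_i=\frac{\Scal-b_i}{n-1}$, the largest satisfies $\rho_{\rm max}=\frac{\Scal-b_1}{n-1}<\frac{1}{n-1}\bigl(\Scal+\frac{n-2}{n}\Scal\bigr)=\frac{2\Scal}{n}$, which is exactly $\Scal>\frac{n}{2}\rho_{\rm max}$, i.e.\ ${\rm Ein}_{n/2}=\Scal\,g-\frac{n}{2}\Ric>0$. Proposition~\ref{hered}(1) then gives $\Eink>0$ for all $0<k<\frac{n}{2}$, while for $k\le0$ one has $\Eink=\Scal\,g-k\Ric\ge\Scal\,g>0$ directly; thus $\Eink>0$ for every $k\in(-\infty,\frac{n}{2}]$.

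I expect the only step carrying real content to be the Cauchy--Schwarz bound on $b_1$: it is what produces the constant $\frac{n}{2}$ and shows it is optimal, equality being approached in the limit $b_2=\cdots=b_n$ with $b_1/\Scal\to-\frac{n-2}{n}$.
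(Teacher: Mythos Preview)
Your proof is correct. The identity $(n-1)\Ric=t_1(B)$ and the deduction of $\Ric>0$ are handled exactly as in the paper. For the main inequality, however, you take a different route. The paper introduces an auxiliary tensor $\bar B:=\frac{2-n}{n}\Scal\,g+(n-1)\Ric=\frac{2}{n}\Scal\,g-B$, checks that $\sigma_1(\bar B)=\sigma_1(B)$ and $\sigma_2(\bar B)=\sigma_2(B)$, applies the classical implication $\Gamma_2(\bar B)>0\Rightarrow t_1(\bar B)>0$ once more, and finally identifies $t_1(\bar B)=\frac{2(n-1)}{n}{\rm Ein}_{n/2}$. You instead bound the smallest eigenvalue $b_1$ of $B$ directly by a Cauchy--Schwarz estimate on $\sum_{i\ge2}b_i^2$, obtaining $b_1>-\frac{n-2}{n}\Scal$ and hence $\rho_{\max}<\frac{2}{n}\Scal$. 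The two arguments are in fact the same computation in different clothing: the reflection $b_i\mapsto \frac{2}{n}\Scal-b_i$ sends $B$ to $\bar B$, and the paper's inequality $\bar b_n<\sigma_1(\bar B)$ is precisely your lower bound on $b_1$. Your version is more elementary and self-contained, and it makes transparent why the constant $\frac{n}{2}$ arises (equality in Cauchy--Schwarz); the paper's version is slicker and showcases a reusable trick, namely that shifting a symmetric tensor by $\frac{2}{n}\sigma_1$ times the identity preserves $\sigma_1$ and $\sigma_2$.
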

\begin{proof}
The first part is easy to check, in fact $t_1(B)=\sigma_1(B)g-B=(n-1)\Ric.$ It is classical that if $\Gamma_2(B)>0$ then the first Newton transformation $t_1(B)=(n-1)\Ric$ is positive definite. To prove the second part, we define the tensor 
$$\bar{B}:=\frac{2-n}{n}\Scal.g+(n-1)\Ric.$$
A straight forward computation shows that $\sigma_1(\bar{B})=\sigma_1(B)$ and $\sigma_2(\bar{B})=\sigma_2(B)$. Therefore, the positivity of $\Gamma_2(B)$ implies the positivity of $\Gamma_2(\bar{B})$ and consequently  the positivity of the first Newton transformation $t_1(\bar{B})$. A straightforward computation shows that
$$t_1(\bar{B})=\sigma_1(\bar{B})g-\bar{B}=\frac{2(n-1)}{n}\bigl(\Scal -\frac{n}{2}\Ric\bigr)=\frac{2(n-1)}{n}{\rm Ein}_{n/2}.$$
 This completes the proof.

\end{proof}
  
Recall that the Schouten tensor $A$ is defined by $A=\frac{1}{n-2}\bigl( \Ric-\frac{\Scal}{2(n-1)}.g\bigr)$. Here also we shall say that  $\Gamma_2(A)>0$ if $\sigma_1(A)>0$ and $\sigma_2(A)>0$. A simple computation shows that
\begin{equation}
\Gamma_2(A)>0 \iff \Scal>0 \,\,\, {\rm and}\,\,\, \frac{n}{4}\Scal^2>(n-1)|\Ric|^2.
\end{equation}
The proof of the following proposition is straightforward
\begin{proposition}
For $n\geq 4$ we have
$${\rm Ein}_{n-1}>0\implies \Gamma_2(B)>0\implies \Gamma_2(A)>0.$$
Furthermore, if $n=4$ one has
 $$ \Gamma_2(A)>0 \iff  \Gamma_2({\rm Ein}_3 )>0.$$
\end{proposition}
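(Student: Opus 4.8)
The plan is to translate each of the three assertions into an elementary inequality between the pointwise scalar quantities $\Scal$ and $|\Ric|^2$, and then to compare these inequalities directly; all the algebraic identities for $\sigma_1$ and $\sigma_2$ needed for this have essentially been recorded in the lines preceding the statement.

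First I would dispose of ${\rm Ein}_{n-1}>0\implies\Gamma_2(B)>0$: if $B={\rm Ein}_{n-1}$ is positive definite at a point, then all its eigenvalues are positive there, so every elementary symmetric function $\sigma_j(B)$ is positive at that point; in particular $\sigma_1(B)>0$ and $\sigma_2(B)>0$, which is exactly $\Gamma_2(B)>0$. Next, for $\Gamma_2(B)>0\implies\Gamma_2(A)>0$, I would invoke the two equivalences displayed above: $\Gamma_2(B)>0$ amounts to $\Scal>0$ together with $\Scal^2>(n-1)|\Ric|^2$, while $\Gamma_2(A)>0$ amounts to $\Scal>0$ together with $\tfrac{n}{4}\Scal^2>(n-1)|\Ric|^2$. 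Since $n\geq 4$ gives $\tfrac{n}{4}\geq 1$, one gets $\tfrac{n}{4}\Scal^2\geq\Scal^2>(n-1)|\Ric|^2$, and the implication follows. For the last claim, when $n=4$ one has ${\rm Ein}_3={\rm Ein}_{n-1}=B$ and the coefficient $\tfrac{n}{4}$ equals $1$, so both $\Gamma_2({\rm Ein}_3)>0$ and $\Gamma_2(A)>0$ reduce to the single condition $\Scal>0$ and $\Scal^2>3|\Ric|^2$; hence they are equivalent.

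The only computation I would actually carry out is the verification of the equivalence $\Gamma_2(A)>0\iff\Scal>0$ and $\tfrac{n}{4}\Scal^2>(n-1)|\Ric|^2$ used above: substituting $A=\tfrac{1}{n-2}\bigl(\Ric-\tfrac{\Scal}{2(n-1)}g\bigr)$ into $\sigma_1(A)=\tr A$ and $2\sigma_2(A)=(\tr A)^2-|A|^2$ and simplifying yields $\sigma_1(A)=\tfrac{\Scal}{2(n-1)}$ and $(n-2)^2\sigma_2(A)=\tfrac{1}{2}\bigl(\tfrac{n}{4(n-1)}\Scal^2-|\Ric|^2\bigr)$, which gives the stated characterisation. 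I do not expect any genuine obstacle here: the one point that requires care is bookkeeping the constant $\tfrac{n}{4}$ accurately, so that one sees $n\geq 4$ is precisely what makes the chain of implications run, and that at the boundary value $n=4$ it degenerates into an equivalence.
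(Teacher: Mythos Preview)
Your proof is correct and follows essentially the same route the paper has in mind: the paper records the two equivalences $\Gamma_2(B)>0\iff\Scal>0,\ \Scal^2>(n-1)|\Ric|^2$ and $\Gamma_2(A)>0\iff\Scal>0,\ \tfrac{n}{4}\Scal^2>(n-1)|\Ric|^2$ just before the proposition, declares the proof ``straightforward,'' and leaves it at that. Your argument simply makes explicit the comparison $\tfrac{n}{4}\geq 1$ for $n\geq 4$ (with equality at $n=4$), together with the trivial observation that positive definiteness of $B$ forces $\sigma_1(B),\sigma_2(B)>0$; this is exactly what the paper intends.
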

The following proposition generalizes the above result
\begin{proposition}
For $n\geq 4$ and $k=\frac{2n(n-1)}{3n-4}$ we have
$$ \Gamma_2(A)>0 \iff  \Gamma_2(\Eink)>0.$$

\end{proposition}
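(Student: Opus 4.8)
The plan is to reduce both sides of the claimed equivalence to a pair of conditions on the two scalars $\Scal$ and $|\Ric|^2$, and then to match them. The starting observation is that both the Schouten tensor $A=\frac{1}{n-2}\Ric-\frac{1}{2(n-1)(n-2)}\Scal\,g$ and the modified tensor $\Eink=\Scal\,g-k\Ric$ are of the common form $T=\alpha\Ric+\beta\,\Scal\,g$ (recall $\Scal=\tr_g\Ric$). So first I would record, for such a generic $T$, the formulas
\[
\sigma_1(T)=\tr T=(\alpha+n\beta)\Scal,\qquad \sigma_2(T)=\tfrac12\bigl((\tr T)^2-\tr(T^2)\bigr)=\tfrac12\Bigl(\bigl(\alpha^2+2(n-1)\alpha\beta+n(n-1)\beta^2\bigr)\Scal^2-\alpha^2|\Ric|^2\Bigr),
\]
which follow at once from $\tr\Ric=\Scal$ and $\tr(\Ric^2)=|\Ric|^2$ together with $\Ric\cdot g=\Ric$.

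Next I would specialize. For $A$, i.e.\ $\alpha=\frac1{n-2}$ and $\beta=-\frac1{2(n-1)(n-2)}$, the formulas simplify to $\sigma_1(A)=\frac{1}{2(n-1)}\Scal$ and $\sigma_2(A)=\frac{1}{2(n-2)^2}\bigl(\frac{n}{4(n-1)}\Scal^2-|\Ric|^2\bigr)$, so that $\Gamma_2(A)>0$ is equivalent to the pair $\{\Scal>0,\ \frac{n}{4}\Scal^2>(n-1)|\Ric|^2\}$, which is exactly the criterion recorded just before the proposition. For $\Eink$, i.e.\ $\alpha=-k$ and $\beta=1$, the same formulas give $\sigma_1(\Eink)=(n-k)\Scal$ and $\sigma_2(\Eink)=\tfrac12\bigl((k^2-2(n-1)k+n(n-1))\Scal^2-k^2|\Ric|^2\bigr)$.

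Then I would plug in $k=\frac{2n(n-1)}{3n-4}$. One checks $n-k=\frac{n(n-2)}{3n-4}>0$, so $\sigma_1(\Eink)>0\iff\Scal>0$, matching the sign condition coming from $\sigma_1(A)$. The crux is the algebraic identity $\dfrac{k^2-2(n-1)k+n(n-1)}{k^2}=\dfrac{n}{4(n-1)}$, which is equivalent to $(3n-4)k^2-8(n-1)^2k+4n(n-1)^2=0$; this quadratic in $k$ has discriminant $16(n-1)^2(n-2)^2$ and roots $k=2(n-1)$ and $k=\frac{2n(n-1)}{3n-4}$, and our value is the latter root. Since the identity forces $k^2-2(n-1)k+n(n-1)=\frac{n}{4(n-1)}k^2>0$, the condition $\sigma_2(\Eink)>0$ unwinds to $|\Ric|^2<\frac{n}{4(n-1)}\Scal^2$, that is $\frac{n}{4}\Scal^2>(n-1)|\Ric|^2$. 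Hence $\Gamma_2(\Eink)>0$ is equivalent to the pair $\{\Scal>0,\ \frac{n}{4}\Scal^2>(n-1)|\Ric|^2\}$, which is precisely the criterion for $\Gamma_2(A)>0$, proving the equivalence.

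The computation is elementary throughout, so there is no deep obstacle; the one place I would be careful is the sign bookkeeping. I must verify that $\sigma_1(\Eink)$ and $\sigma_1(A)$ carry the same sign — this is where $k<n$ (equivalently $n>2$) is used, and it is the reason the competing root $k=2(n-1)$, for which $n-k<0$ and the sign of $\sigma_1$ reverses, has to be discarded — and that the coefficient of $\Scal^2$ in $\sigma_2(\Eink)$ is genuinely positive, so that $\sigma_2(\Eink)>0$ really encodes the stated upper bound on $|\Ric|^2$ rather than an empty or reversed inequality.
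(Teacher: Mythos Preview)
Your proof is correct and follows essentially the same approach as the paper: both reduce the conditions $\Gamma_2(A)>0$ and $\Gamma_2(\Eink)>0$ to the same pair of scalar inequalities $\Scal>0$ and $\tfrac{n}{4}\Scal^2>(n-1)|\Ric|^2$. The only difference is in packaging: rather than computing $\sigma_1(\Eink)$ and $\sigma_2(\Eink)$ separately and then matching inequalities as you do, the paper rescales $\Eink$ by the positive constant $\tfrac{3n-4}{2n(n-1)(n-2)}$ and observes that the resulting tensor $\bar A$ satisfies $\sigma_1(\bar A)=\sigma_1(A)$ and $\sigma_2(\bar A)=\sigma_2(A)$ exactly, which immediately gives the equivalence. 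Your version is slightly more explicit (in particular, your identification of the quadratic in $k$ and its two roots explains where the value $k=\tfrac{2n(n-1)}{3n-4}$ comes from), while the paper's version is more concise; the underlying computation is the same.
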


\begin{proof}
we define the tensor 
$$\bar{A}:=\frac{3n-4}{2n(n-1)(n-2)}\bigl(\Scal.g-\frac{2n(n-1)}{3n-4}\Ric\bigr).$$
A straight forward computation shows that $\sigma_1(\bar{A})=\sigma_1(A)$ and $\sigma_2(\bar{A})=\sigma_2(A)$. Therefore, the positivity of $\Gamma_2(A)$ is equivalent to the positivity of $\Gamma_2(\bar{A})$.
 This completes the proof.
\end{proof}
Note that in the previous proposition  $k=\frac{2n(n-1)}{3n-4}<n-1$ if $n\geq 5$.  If we increase $k$ we get the positivity of all higher $\Gamma_k(A)$ curvatures as shown by the next proposition.
\begin{proposition}
Let $A$ denotes the Schouten tensor. For $n\geq 4$ and $k=\frac{2(n-1)^2}{2n-3}$ we have
$$\Eink >0 \implies  A>0.$$
In particular, the positivity of $\Eink$ implies the positivity of all $\Gamma_i(A)$ curvature for $1\leq i\leq n$.
\end{proposition}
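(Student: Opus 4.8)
The plan is to reduce the statement to a single pointwise estimate on the eigenvalues of the Ricci operator. First I would recall that, pointwise, $A>0$ is equivalent to the assertion that every eigenvalue $\rho$ of $\Ric$ satisfies $\rho>\frac{\Scal}{2(n-1)}$, since $A=\frac{1}{n-2}\bigl(\Ric-\frac{\Scal}{2(n-1)}g\bigr)$. So it suffices to bound the \emph{smallest} Ricci eigenvalue from below in terms of $\Scal$.

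Next I would extract from $\Eink>0$ the two facts I need. With $k=\frac{2(n-1)^2}{2n-3}$ one checks $k<n$ (this reduces to $2<n$), so by the trace identity $\trace(\Eink)=(n-k)\Scal$ the hypothesis forces $\Scal>0$; and $\Scal\,g-k\Ric>0$ says precisely that each eigenvalue $\rho_i$ of $\Ric$ satisfies $\rho_i<\frac{\Scal}{k}$. Now order the eigenvalues $\rho_1\le\rho_2\le\dots\le\rho_n$ at a fixed point and write
\[
\Scal=\rho_1+\sum_{i=2}^n\rho_i<\rho_1+(n-1)\frac{\Scal}{k},
\]
which gives $\rho_1>\bigl(1-\frac{n-1}{k}\bigr)\Scal=\frac{k-n+1}{k}\,\Scal$. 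A one-line computation with the stated value $k=\frac{2(n-1)^2}{2n-3}$ shows $\frac{k-n+1}{k}=\frac{1}{2(n-1)}$, hence $\rho_1>\frac{\Scal}{2(n-1)}$. By the first paragraph this is exactly $A>0$ at that point, and since the point was arbitrary, $A>0$ on all of $M$.

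For the last assertion, a positive-definite symmetric endomorphism has all eigenvalues positive, so every elementary symmetric function $\sigma_i$ of the eigenvalues of $A$ is positive; that is, $\Gamma_i(A)>0$ for $1\le i\le n$.

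I do not expect a genuine obstacle here: the only real content is the choice of the constant, which is forced by demanding that the crude bound $\rho_1>\frac{k-n+1}{k}\Scal$ meet the Schouten threshold $\frac{\Scal}{2(n-1)}$ exactly. It is worth remarking that for any larger admissible $k$ one gets the same conclusion with strict slack, which is consistent with the monotone picture described before the proposition (and with the earlier value $\frac{2n(n-1)}{3n-4}\le\frac{2(n-1)^2}{2n-3}$ used for $\Gamma_2(A)$, the difference being $\frac{(n-2)^2}{(2n-3)(3n-4)}\Scal$-type terms).
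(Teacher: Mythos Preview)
Your proof is correct and is essentially the paper's argument unpacked at the eigenvalue level. The paper phrases the same computation more succinctly via the identity $k(n-2)A = t_1(\Eink)$ (the first Newton transformation $t_1(B)=\sigma_1(B)g-B$) together with the general fact that a positive operator has positive first Newton transformation; your inequality $\Scal-\rho_1=\sum_{i\ge 2}\rho_i<(n-1)\Scal/k$ is exactly that fact written out, and your check $\frac{k-n+1}{k}=\frac{1}{2(n-1)}$ is the verification of that identity.
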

Note that in the previous proposition $k=\frac{2(n-1)^2}{2n-3}\in (n-1, n)$.
\begin{proof}
A straightforward computation shows that the Schouten tensor $A$ coincides, up to a positive constant factor, with the first Newton transformation of the tensor $\Eink$, where $k$ is as in the theorem. Precisely, one has
$$k(n-2)A=t_1(\Eink).$$
\end{proof}
\begin{remark}
The previous theorem is still true under the weaker condition $\Gamma_2(\Eink)>0$.
\end{remark}

\section{Proof of Theorems}
\subsection{Riemannian submersions and positive $\Eink$ curvature, proof of Theorem A }\label{Riem-subm-section}

The property $\Eink >0$  behaves very well with Riemannian products and as well with Riemannian submersions:

\begin{theorem}\label{Riem-subm}
Let $\pi  : M \to  B$  be a Riemannian submersion where the total
space $(M; g)$  is a compact manifold with $\dim M = n$  and the fibers  are with dimension $p$. 
Let  $\hat{g}$  denote the induced metric on the  fibers . Assume that  the fibers with the induced metric have positive $\Eink$ curvature for some $k<p$. Then the canonical variation metric $g_t$ of the total space has positive $\Eink$ curvature for all $t<t_0$ for some positive $t_0$.
\end{theorem}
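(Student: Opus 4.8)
The plan is to realise $g_t$ as a collapse of the fibres and to use the O'Neill formulas to isolate the dominant terms of $\Scal_{g_t}$ and $\Ric_{g_t}$ as $t\to 0^+$. The mechanism is that the intrinsic fibre scalar curvature contributes a term of order $t^{-2}$ to $\Scal_{g_t}$; the only other order-$t^{-2}$ contribution to $\Eink(g_t)$ comes from the vertical part of $\Ric_{g_t}$; and the competition between the two is governed exactly by the sign of $\Eink(\hat g)$ along the fibre.

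Concretely, I would fix $x\in M$, split $T_xM=\mathcal V\oplus\mathcal H$ orthogonally (with respect to $g$) into the vertical $p$-plane and the horizontal $(n-p)$-plane, and choose a $g$-orthonormal adapted frame $U_1,\dots,U_p,X_1,\dots,X_q$ with $q=n-p$; since the canonical variation keeps the horizontal metric fixed and multiplies the vertical metric by $t^2$, the frame $t^{-1}U_1,\dots,t^{-1}U_p,X_1,\dots,X_q$ is $g_t$-orthonormal. From the standard curvature formulas for a canonical variation (the O'Neill formulas together with the scaling of the integrability tensor $A$ and of the second fundamental form $T$ of the fibres, as in Besse, \emph{Einstein Manifolds}, Ch.~9), one has, uniformly in $x$ by compactness of $M$, as $t\to 0$:
\[ \Scal_{g_t}=\tfrac{1}{t^2}\,\widehat{\Scal}+O(1),\qquad \Ric_{g_t}(U_i,U_j)=\widehat{\Ric}(U_i,U_j)+O(t^2), \]
\[ \Ric_{g_t}(X_a,X_b)=O(1),\qquad \Ric_{g_t}(X_a,U_i)=O(t), \]
where $\widehat{\Scal},\widehat{\Ric}$ are the scalar and Ricci curvatures of the fibre $(F,\hat g)$ (only the orders of magnitude will be used below; the weaker bound $\Ric_{g_t}(X_a,U_i)=O(1)$ already suffices).

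Now take an arbitrary $g_t$-unit vector $e$ and write $e=(\cos\theta)X+(\sin\theta)t^{-1}U_0$ with $X\in\mathcal H$ and $U_0\in\mathcal V$ both $g$-unit. Since $g_t(e,e)=1$,
\[ \Eink(g_t)(e,e)=\Scal_{g_t}-k\,\Ric_{g_t}(e,e). \]
Expanding $\Ric_{g_t}(e,e)$ with the asymptotics above (the vertical--vertical term is $t^{-2}\sin^2\theta\,\widehat{\Ric}(U_0,U_0)+O(1)$, the cross term $O(1/t)$, the horizontal term $O(1)$) gives
\[ \Eink(g_t)(e,e)=\frac{1}{t^2}\Bigl(\widehat{\Scal}(x)-k\sin^2\theta\,\widehat{\Ric}(U_0,U_0)\Bigr)+O(1/t), \]
with the remainder uniform in $x$, $U_0$ and $\theta$. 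The bracket is an affine function of $\sin^2\theta\in[0,1]$, so it is at least $\min\{\widehat{\Scal}(x),\ \widehat{\Scal}(x)-k\,\widehat{\Ric}(U_0,U_0)\}=\min\{\widehat{\Scal}(x),\ \Eink(\hat g)(U_0,U_0)\}$; the first number is positive because $\trace\Eink(\hat g)=(p-k)\,\widehat{\Scal}$ with $p-k>0$, and the second by hypothesis. By compactness of the unit vertical sphere bundle of $(M,g)$ this minimum is bounded below by a constant $c>0$, whence $\Eink(g_t)(e,e)\ge c\,t^{-2}-C\,t^{-1}=(c-Ct)\,t^{-2}$ for a uniform $C$, which is $>0$ as soon as $t<t_0:=c/C$. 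This gives $\Eink(g_t)>0$ for $0<t<t_0$; Theorem~A and the Cartesian product estimates then follow from the definitions of $\cEin$ and $\cein$ (for a product one takes one factor as fibre over the other).

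The only genuinely technical step is the curvature input of the second paragraph: tracking the precise powers of $t$ carried by $A$, $T$ and their covariant derivatives in the O'Neill formulas for $\Scal_{g_t}$ and $\Ric_{g_t}$, and checking that every remainder is uniformly bounded on the compact manifold $M$. Everything after that is soft — it collapses to the single leading-order inequality $\widehat{\Scal}-k\sin^2\theta\,\widehat{\Ric}(\cdot,\cdot)>0$, which is exactly $\Eink(\hat g)>0$ together with $k<p$ (the latter ensuring that the dominant term $t^{-2}\widehat{\Scal}$ has the right sign).
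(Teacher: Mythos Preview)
Your argument is correct and is essentially the same as the paper's: decompose a $g_t$-unit vector into its vertical and horizontal parts (your $\sin\theta$ is the paper's $\alpha$, your $U_0$ is the paper's $U_1$), use the O'Neill/canonical-variation asymptotics to extract the leading $t^{-2}$ term $\widehat{\Scal}-k\sin^2\theta\,\widehat{\Ric}(U_0,U_0)$ with an $O(1/t)$ remainder, bound this bracket below by $\min\{\widehat{\Scal},\Eink(\hat g)(U_0,U_0)\}>0$, and conclude by compactness. The paper cites \cite{Labbi-einstein} for the curvature asymptotics rather than spelling them out, but the structure of the proof is identical to yours.
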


\begin{proof}
We use the notations of \cite{Labbi-einstein} where the same property was proved for the case $k=2$. Let $E$ be a $g_t$-unit tangent vector to the total space, denote by $\alpha U$ its orthogonal projection onto the vertical subspace, where $U$ is a $g_t$-unit tangent vertical vector. Then $U_1=tU$ is $g_1$-unit tangent vertical vector and we have \cite{Labbi-einstein}
\begin{equation}
\begin{split}
(\Eink)_t(E)=&\Scal_t-k\Ric_t(E)=\frac{1}{t^2}\hat{\Scal}-\frac{k\alpha^2}{t^2}\hat{\Ric}(U_1)+{\rm O}(\frac{1}{t})\\
=& \frac{\alpha^2}{t^2}\bigl( \hat{\Scal}-k\hat{\Ric}(U_1)\bigr)+\frac{1-\alpha^2}{t^2}\hat{\Scal}+{\rm O}(\frac{1}{t}).
\end{split}
\end{equation}
Recall that for $k<p$, the positivity of $\hat{\Eink}$ implies the positivity of $\hat{\Scal}$ and note that $\alpha^2\leq 1$. The theorem follows using the compactness of $M$.
\end{proof}
\begin{corollary}
Let $S^p(\lambda)$ denotes the $p$-dimensional round sphere of radius $\lambda$ and $M$ be an arbitrary compact Riemannian manifold. Then the product 
$S^p(\lambda)\times M$ is with positive $\Eink$ for $p>1$, $k<p$ and $\lambda$ small enough. \\
In particular, the product $S^2(\lambda)\times M$ admits Riemannian metrics with  positive $\Eink$ for any $k\in (-\infty , 2)$.
\end{corollary}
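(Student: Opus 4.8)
The plan is to read the statement straight off Theorem \ref{Riem-subm}, applied to the trivial Riemannian submersion $\pi\colon S^p(1)\times M\to M$, whose fibers are copies of the unit round sphere $S^p(1)$ of dimension $p$.

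First I would record the relevant curvature of the fiber. The round sphere $S^p(1)$ has constant sectional curvature $1$, so by the first of the Examples following Proposition \ref{hered} its modified Einstein tensor is $\hat{\Eink}=(p-1)(p-k)\,\hat g$. Hence for $p>1$ and any $k<p$ the fiber metric has positive $\Eink$ curvature; note that this condition $k<p$ is precisely the hypothesis required in Theorem \ref{Riem-subm}.

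Next I would invoke Theorem \ref{Riem-subm}: since $S^p(1)\times M$ is compact and its fibers have positive $\Eink$ for the given $k<p$, the canonical variation metric $g_t$ on the total space has positive $\Eink$ curvature for all $t<t_0$ and some $t_0>0$. The one point worth spelling out is that for a Riemannian \emph{product} the canonical variation is again a product: rescaling the fiber metric by $t^2$ turns $S^p(1)\times M$ into $S^p(t)\times M$ with its product metric. Taking $\lambda=t$ with $0<\lambda<t_0$ then yields $S^p(\lambda)\times M$ with positive $\Eink$, as claimed. The ``in particular'' assertion is just the case $p=2$, where $k<p$ becomes $k<2$, so $S^2(\lambda)\times M$ carries a metric of positive $\Eink$ for every $k\in(-\infty,2)$.

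There is no real obstacle here; the whole content sits in Theorem \ref{Riem-subm}. Should one want a self-contained argument, one can compute directly on the product: for the product metric on $S^p(\lambda)\times M$ one has $\Ric=\tfrac{p-1}{\lambda^2}\hat g\oplus\Ric_M$ (block diagonal) and $\Scal=\tfrac{p(p-1)}{\lambda^2}+\Scal_M$, so for a unit vector $E=(\cos\theta)\,U+(\sin\theta)\,V$ with $U$ tangent to the sphere factor and $V$ tangent to $M$,
\[
\Eink(E,E)=\frac{(p-1)\bigl(p-k\cos^2\theta\bigr)}{\lambda^2}+\Scal_M-k\sin^2\theta\,\Ric_M(V,V).
\]
For $k<p$ the coefficient $(p-1)(p-k\cos^2\theta)$ is bounded below, uniformly in $\theta$, by the positive constant $(p-1)\min\{p,\,p-k\}$, while the remaining two terms are bounded because $M$ is compact; hence the right-hand side is positive once $\lambda$ is small enough. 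Either route proves the corollary.
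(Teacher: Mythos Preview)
Your proof is correct and follows exactly the route the paper intends: the corollary is stated immediately after Theorem \ref{Riem-subm} without a separate proof, so applying that theorem to the product submersion $S^p(1)\times M\to M$ and identifying the canonical variation with $S^p(t)\times M$ is precisely what is meant. Your added direct computation is a nice bonus and is also correct.
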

As a direct consequence of the previous corollary, any finitely presented group can be made a the fundamental group of a compact Riemannian manifold with dimension $\geq 6$ and of  positive $\Eink$ for any $k\in (-\infty , 2)$. We can say better after the surgery theorem below.\\
\subsection{Proof of Theorem B}
As a second consequence of the previous theorem we have the following generalization of a result by Lawson-Yau \cite{Lawson-Yau}

\begin{theorem}
If a compact connected manifold $M$ admits  an effective action of a non-abelian  compact connected Lie group  $G$ then $\cEin(M)\geq 2$ and $\cein(M)=-\infty$.
\end{theorem}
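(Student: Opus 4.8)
The plan is to deduce Theorem~B from Theorem~\ref{Riem-subm} by exhibiting, for every $k<2$, a $G$-invariant metric on the compact manifold $M$ whose canonical variation has $\Eink>0$ (whence $\cEin(M)>k$), and likewise to handle $k<0$ for $\cein$; the two extra ingredients are the structure of the $G$-action and the fibre-shrinking construction of Lawson--Yau. I will use repeatedly the constant-curvature example of the introduction: an $m$-manifold of constant sectional curvature $\lambda>0$ has $\Eink=(m-1)(m-k)\lambda\,g>0$ for every $k<m$.

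First I would reduce to the case $G\in\{SU(2),SO(3)\}$. A compact connected non-abelian $G$ has $\mathrm{Lie}(G)\supseteq\mathfrak{su}(2)$; integrating this inclusion along the simply connected group $SU(2)$ gives a homomorphism $SU(2)\to G$ whose kernel, being a discrete normal subgroup of $SU(2)$, is $\{e\}$ or $\{\pm I\}$, so its image $G'$ is a closed subgroup of $G$ isomorphic to $SU(2)$ or $SO(3)$. An element of $G'$ acting trivially on $M$ lies in the kernel of the effective $G$-action, so the $G'$-action is again effective, and it suffices to prove the statement for an effective action of $G'\in\{SU(2),SO(3)\}$ on the compact connected $M$. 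As $G'$ acts effectively, its fixed-point set $M^{G'}$ is a closed submanifold with empty interior and every orbit outside it has positive dimension; since $SU(2)$ and $SO(3)$ have no closed $2$-dimensional subgroups, such an orbit is $G'/H$ with $\dim H\in\{0,1\}$, i.e.\ a spherical space form $S^3/\Gamma$, an $S^2$, or an $\R\P^2$. Each of these carries a $G'$-invariant metric $\hat g$ of constant positive sectional curvature — push down the round bi-invariant metric of $SU(2)$ or $SO(3)$, respectively take the round $S^2$ — so by the example above, on a $p$-dimensional orbit ($p\in\{2,3\}$) we have $\hat{\Eink}>0$ for all $k<p$, in particular for all $k<2\le p$; and since $\hat\Ric>0$, also $\hat{\Eink}=\hat{\Scal}\,\hat g-k\hat{\Ric}\ge\hat{\Scal}\,\hat g>0$ for all $k<0$.

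Next I would choose, by averaging, a $G'$-invariant metric $g$ on $M$ restricting to such a constant-curvature metric on every orbit, and form its canonical variation $g_t$, obtained by scaling the orbit directions by $t^2$. On a neighbourhood of any orbit of positive dimension the slice decomposition $G'\times_{H_x}S_x$ places us in the situation of Theorem~\ref{Riem-subm} with the orbit in the place of the fibre, and the computation in its proof gives
\[ (\Eink)_t(E)=\frac{\alpha^2}{t^2}\bigl(\hat{\Scal}-k\hat{\Ric}(U_1)\bigr)+\frac{1-\alpha^2}{t^2}\,\hat{\Scal}+{\rm O}\bigl(\tfrac{1}{t}\bigr), \]
where $E$ is a $g_t$-unit vector, $\alpha U$ its vertical part and $U_1=tU$. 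Fixing $k\in(0,2)$, the first bracket equals $\hat{\Eink}(U_1)>0$ and the second term is $\ge0$ since $\hat{\Scal}>0$, and, as $\alpha^2+(1-\alpha^2)=1$, they are not simultaneously zero; by compactness there is $t_0>0$ with $\Eink(g_t)>0$ for $t<t_0$, so $\cEin(M)>k$, and letting $k\uparrow 2$ gives $\cEin(M)\ge2$. For $\cein(M)$ one fixes any $k<0$: the same $g$, and the same estimate with $\hat{\Eink}>0$ now holding for this $k$, give $\Eink(g_t)>0$ for small $t$, hence $\cein(M)<k$; letting $k\downarrow-\infty$ gives $\cein(M)=-\infty$.

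The hard part is making the previous paragraph honest where the orbit map is not a submersion — at exceptional orbits, at lower-dimensional orbits, and above all at fixed points of $G'$, where the orbit is $0$-dimensional and there is nothing to shrink. There one must assemble the $G'$-invariant metric from the slice pieces with a $G'$-invariant partition of unity, construct the metric near the fixed-point set by hand (by induction on the dimension one equips the normal sphere with a positively curved $G'$-invariant metric and uses a round-cap radial profile, so that this region has positive curvature for all $t$), and check that the estimate survives the gluing. This is precisely the technical core of the Lawson--Yau theorem \cite{Lawson-Yau}; the point to add is only that in their glued, fibre-shrunk metric the contributions of the transition regions, of the second fundamental forms of the orbits, and of the base are all ${\rm O}(1/t)$, whereas a shrunk positively curved orbit of dimension $p\in\{2,3\}$ contributes at order $1/t^2$, so the displayed estimate — and with it $\Eink>0$ for every $k<2$ and every $k<0$ — persists on all of $M$.
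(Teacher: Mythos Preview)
Your proposal is correct at the same level of sketch as the paper's proof, but the technical route differs. You follow the original Lawson--Yau strategy of working directly on $M$: shrink the orbit directions of a $G'$-invariant metric, use the submersion estimate of Theorem~\ref{Riem-subm} on the principal stratum, and defer the delicate analysis near fixed points and lower-dimensional orbits to the Lawson--Yau machinery. The paper instead avoids the orbit stratification on $M$ altogether by passing to the product $M\times G$ with the \emph{diagonal} $G$-action, which is free; the quotient map $\pi:M\times G\to M$ is then an honest Riemannian submersion with fibre $G$, so Theorem~\ref{Riem-subm} applies on the nose to give $\Eink(g_t)>0$ on $M\times G$. The extra observation is that the canonical variation $g_t$ is $G\times G$-invariant, so the first-factor projection $p_1:M\times G\to M$ is \emph{also} a Riemannian submersion, and the submersed metric on $M$ is the one whose $\Eink$ one finally computes. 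Both proofs end up deferring the hardest estimates (near fixed points) to the literature --- you to \cite{Lawson-Yau}, the paper to the simplification in \cite{Labbi-groups} --- but the $M\times G$ trick buys a cleaner reduction to the free case, while your direct approach stays closer to the geometry of $M$ itself.

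One imprecision worth flagging in your second paragraph: averaging an arbitrary metric produces a $G'$-invariant metric on $M$, but its restriction to a $3$-dimensional orbit $SU(2)/\Gamma$ need not be of constant curvature --- it is only an $\mathrm{Ad}(\Gamma)$-invariant left-invariant metric, i.e.\ generically a Berger-type metric, which may even fail to have positive Ricci curvature. The Lawson--Yau construction you invoke in your last paragraph does arrange the orbit metrics to come from the bi-invariant metric on $G'$, but that is not what naive averaging gives; the sentence should be rephrased to reflect that the metric is built, not averaged.
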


\begin{proof}
The proof below follows almost word by word Lawson-Yau's proof of a similar result about positive scalar curvature \cite{Lawson-Yau}.\\
Since a non-abelian  compact connected Lie group contains a Lie subgroup isomorphic to $SU(2)$ or $ SO(3)$, 
we then can assume  without loss of generality that the group $G$ is $SU(2)$ or $ SO(3)$.\\
It is easy to see that $\cEin(G)\geq 2$ (in fact even $\geq 3$) and $\cein(G)=-\infty$.\\
If the action of $G$ is free, the proof is rather easy and goes as follows.
The canonical projection $M\to M/G$  is in this case a smooth submersion.
Let the fibers be equipped with a bi-invariant metric from the group $G$ via the
canonical inclusion ${\mathcal{G}} \subset  T_mM$.
Using any $G$-invariant metric on $ M$, we define the horizontal distribution to which
we lift up an arbitrary metric from the base $ M/G$. In this way we get  a Riemannian metric on
$M$ such that the projection $M\to M/G$ is a Riemannian submersion.\\
Finally, since  $\cEin(G)\geq 2$ and $\cein(G)=-\infty$  then  the fibers with the induced metric have as well the same property. We therefore conclude using the above Riemannian submersions Theorem \ref{Riem-subm}.\\
The general case, where the action is not free is more delicate and can be achieved in the following way. Consider the diagonal action of $G$ on $M\times G$, this action is free and its orbit space is $M$. Denote by $\pi:M\times G\to M$ the quotient map. We then fix a $G$-invariant metric $h$ on $M$ and we construct as above a metric $g$ on $M\times G$ such that $\pi:(M\times G,g)\to (M,h)$ is a Riemannian submersion. We denote by $g_t$ its canonical variation as in  thorem \ref{Riem-subm}. It turns out that the metrics $g_t$ are $G\times G$-invariant and thus make the projection onto the first factor $p_1:M\times G\to M$  a Riemannian submersion as well. Direct but long computations show that for $t$ small enough the submersed metric on $M$ has  positive $\Eink$ curvature for any $k<2$ and positive ${\rm ein}_k$ for any negative $k$. We bring the attention of the reader that a simplification of the difficult part of the computations near the fixed points was significantly  simplified in \cite{Labbi-groups}.
\end{proof}
\begin{remark}
In the above theorem, if one assumes the group to have a higher rank one can then concludes that the manifold has a higher $\cEin$ invariant, see a similar result about $p$-curvatures in \cite{Labbi-groups}.
\end{remark}
\subsection{A surgery theorem for positive $\Eink$ metrics, proof of Theorem C}\label{sec:surgery} 

First, we prove the following stability under surgeries theorem

\begin{theorem}\label{surgery-thm}
\begin{enumerate}
\item
For $k\in (-\infty, 1]$, the positivity of the $\Eink$ curvature is preserved under surgeries of codimensions $>2$.
\item For $k>1$, the positivity of the $\Eink$ curvature is preserved under surgeries of codimensions $>k+1$.
\end{enumerate}
\end{theorem}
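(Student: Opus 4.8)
The plan is to follow the Gromov--Lawson surgery construction, in the form already carried out for $k=2$ in \cite{Labbi-einstein}, and to verify that the curvature inequality defining $\Eink>0$ survives it. The soft ingredient that legitimises all the gluings is that, on a compact manifold, $\Eink(g)>0$ is a $C^2$-open condition on $g$: indeed $\Eink(g)=\Scal(g)\,g-k\,\Ric(g)$ depends continuously, in the $C^2$-topology on $g$, on at most two derivatives of $g$, and positive definiteness of a continuous symmetric $2$-tensor field on a compact manifold is $C^0$-open. This permits preliminary $C^2$-small modifications of $g$ near the surgery sphere and, at the end, absorbing the mismatch at the gluing interfaces.

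So let $(M^n,g)$ have $\Eink>0$, and let a surgery of codimension $q$ be prescribed by an embedding $S^m\hookrightarrow M$ with trivial normal bundle, $m=n-q$; by hypothesis $q>2$ in case (1) and $q>k+1$ in case (2). The surgery removes a tube $S^m\times D^q(\bar r)$ and glues in $D^{m+1}\times S^{q-1}$. Following Gromov--Lawson I pass to $M\times\mathbb{R}$ and consider the family of hypersurfaces $M_\gamma$ attached to a plane curve $\gamma(s)=(r(s),t(s))$ in the half-plane $\{r>0\}$: for the trivial curve $M_\gamma$ is $(M,g)$, and by bending $\gamma$ so that it leaves the $t=0$ line, decreases the radius $r$ to a small value $\varepsilon$, runs along at $r\equiv\varepsilon$, and is finally closed off by a torpedo cap, $M_\gamma$ becomes diffeomorphic to the result of the surgery and carries a metric which over the neck is $C^0$-close to the Riemannian product $g|_{S^m}+dt^2+\varepsilon^2\,g_{S^{q-1}}$. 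The Gauss equation, combined with the warped-product structure of the tube, expresses $\Ric$ and $\Scal$ of $M_\gamma$ as the ambient Ricci and scalar curvatures plus corrections governed by the geodesic curvature $\kappa$ of $\gamma$ and by $r,r',r''$; along the $S^{q-1}(r)$ directions $\Ric$ acquires the term $(q-2)/r^2$ and $\Scal$ the term $(q-1)(q-2)/r^2$, whereas the remaining contributions are $O(1/r)$ on the bending part and $O(1)$ where $\gamma$ is straight.

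The decisive step is then a pointwise estimate on $M_\gamma$. For a unit vector $U$ tangent to the sphere factor $S^{q-1}(r)$ one finds
\[
\Eink(U,U)=\Scal-k\,\Ric(U)=\frac{(q-2)(q-1-k)}{r^2}+O\!\left(\tfrac{1}{r}\right),
\]
which is positive for small $r$ exactly when $q\ge3$ and $q-1>k$; and for a unit vector $E$ tangent to the base factor $S^m\times\gamma$,
\[
\Eink(E,E)=\Scal-k\,\Ric(E)=\frac{(q-1)(q-2)}{r^2}+O\!\left(\tfrac{1}{r}\right),
\]
which is positive for small $r$ when $q\ge3$. Since the off-diagonal terms $\Eink(E,U)$ are of lower order, a general unit vector decomposes into these two types and $\Eink(M_\gamma)$ is positive definite on the neck, for $\varepsilon$ small, provided $q\ge3$ and $q-1>k$; in case (1) this is the condition codimension $>2$, in case (2) it is the condition codimension $>k+1$. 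Away from the neck $M_\gamma$ equals $(M,g)$ and so $\Eink>0$ there; the new handle $D^{m+1}\times S^{q-1}(\varepsilon)$ is given the product of a torpedo metric on $D^{m+1}$ with the small round sphere $S^{q-1}(\varepsilon)$, which has $\Eink>0$ for $\varepsilon$ small under the same inequality $q-1>k$ --- this is the local content of the product corollary of Theorem~\ref{Riem-subm}. All the pieces are arranged to agree to high order along their interfaces and are glued using the $C^2$-openness of $\Eink>0$.

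The step I expect to be the main obstacle is the transition part of $\gamma$, where $r$ decreases from the fixed value $\bar r$ to the small value $\varepsilon$: there the positive $1/r^2$ contribution of $S^{q-1}(r)$ is only comparable to, not yet dominant over, the ambient curvature and the bending error $O(1/r)$. One must therefore choose the bending curve so that its geodesic curvature remains small enough that the negative part of the Gauss-equation correction never cancels the fixed positive lower bound on the ambient $\Eink$, while making sure that the $1/r^2$ term has become dominant by the time $r$ is small. This quantitative balancing of $\kappa$, $r$ and the ambient bound is precisely the technical core of the Gromov--Lawson method; it carries over to $\Eink$ without change, the only effect of $k$ being to fix the threshold $q-1>k$, which is exactly what the codimension hypotheses encode.
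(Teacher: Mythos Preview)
Your argument is sound and the key pointwise computation is exactly right, but the paper takes a much shorter route: rather than redo the Gromov--Lawson bending construction, it simply invokes Hoelzel's general surgery stability theorem \cite{Hoelzel}. That result says that any curvature condition defined by an open, convex, $\mathrm{O}(n)$-invariant cone $C\subset C_B(\mathbb{R}^n)$ of algebraic curvature operators is preserved under surgeries of codimension $\geq q$ as soon as the model $S^{q-1}\times\mathbb{R}^{n-q+1}$ has its curvature in $C$. The paper then only observes that $C_{\Eink>0}$ is such a cone (it is cut out by the linear inequalities $\Scal(R)-k\,\Ric(R)(u,u)>0$ over all unit $u$, hence convex) and that $\Eink(S^{q-1}\times\mathbb{R}^{n-q+1})>0$ precisely when $q>2$ and $q>k+1$ --- which is exactly your computation of $\Eink(U,U)=(q-2)(q-1-k)/r^2$ and $\Eink(E,E)=(q-1)(q-2)/r^2$. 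So the decisive ingredient you supply and the one the paper supplies coincide; the difference is that the transition-region balancing you correctly flag as the main obstacle (and propose to carry out by hand following \cite{Labbi-einstein}) is absorbed once and for all into Hoelzel's theorem, where the convexity of the cone is what replaces the delicate choice of bending curve. Your approach is self-contained and makes the geometry explicit; the paper's approach buys a two-line proof at the cost of a heavier citation.
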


\begin{proof} This theorem follows directly from Hoelzel's general surgery theorem  \cite{Hoelzel}. We  use the same notations as in \cite{Hoelzel}. Let $C_B({\Bbb{R}}^n)$ denote the  vector space of algebraic curvature operators $\Lambda^2 {\Bbb{R}}^n\rightarrow \Lambda^2 {\Bbb{R}}^n$ satisfying the first Bianchi identity and endowed with the canonical inner product.
Let 
$$C_{\Eink >0}:=\{R\in C_B({\Bbb{R}}^n): \Eink(R)>0\},$$
where for a unit vector $u$, $\Eink(R)(u)=\Scal(R)-k\Ric(u)$. Here $\Ric$ and  $\Scal(R)$ denote respectively  as usual the first Ricci contraction  and the full contraction of $R$. The subset $C_{\Eink >0}$ is clearly open, convex and it is an ${\rm{O}}(n)$-invariant cone. Furthermore, it is easy to check that $\Eink(S^{q-1}\times \Bbb{R}^{n-q+1})>0$ if $q>2$ and  $q>k+1$. This completes the proof.

\end{proof}

\subsection{The $\cEin$ and $\cein$ invariants of simply connected manifolds, proof of Theorem D}
According to whether the simply connected manifold is either spin or non-spin we have the following classification theorem
\begin{theorem}\label{simply-connected-thm}
Let $n\geq 5$ and $k\in(-\infty, 2)$. Then
\begin{enumerate}
\item Every compact simply connected and non-spin $n$-manifold admits a Riemannian metric with $\Eink>0$.
\item Every compact simply connected spin $n$-manifold admits a metric with $\Eink>0$ if and only if it admits a metric with positive scalar curvature.
\end{enumerate}
\end{theorem}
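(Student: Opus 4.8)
The plan is to reduce Theorem~\ref{simply-connected-thm} to the classical existence results of Gromov--Lawson and Stolz for positive scalar curvature, using the surgery theorem (Theorem~\ref{surgery-thm}) together with Theorem~A to supply a starting metric with $\Eink>0$. Since $k\in(-\infty,2)$ is fixed and $n\geq 5$, surgeries of codimension $>2$ suffice when $k\le 1$, and for $1<k<2$ surgeries of codimension $>k+1$ suffice, i.e.\ codimension $\geq 3$ works in \emph{all} cases $k<2$. This is the key point: the surgery threshold for $\Eink$ with $k<2$ is exactly the same as for positive scalar curvature, namely codimension $\geq 3$. So any surgery argument that produces a psc metric on a simply connected manifold can be run verbatim to produce an $\Eink>0$ metric, \emph{provided} one starts the cobordism/bordism argument from a manifold already carrying an $\Eink>0$ metric.

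For part (1), first I would invoke the Gromov--Lawson--Rosenberg-type structure result: by results on the psc-bordism group (Gromov--Lawson for the simply connected case, as made precise by Stolz), every compact simply connected non-spin $n$-manifold with $n\geq 5$ is obtained from $S^n$, or more convenient model manifolds such as $S^2\times S^{n-2}$ or $\mathbb{C}P^{n/2}$ and their products, by a sequence of surgeries in codimension $\geq 3$. (Concretely: such a manifold is oriented-bordant to a disjoint union of products, every element of the oriented bordism ring $\Omega^{SO}_*$ is represented by manifolds built from $\mathbb{C}P^{2i}$ and such, and one traces through the Gromov--Lawson construction.) The point is to choose the model manifolds so that they carry $\Eink>0$ metrics: $S^n$ does (constant curvature, Example after Proposition~\ref{hered}), $S^2(\lambda)\times M$ does for any $M$ and $k<2$ by the Corollary to Theorem~\ref{Riem-subm}, and $\mathbb{C}P^m$ with the Fubini--Study metric is Einstein of positive scalar curvature hence has $\Eink>0$ for all $k<n$. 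Then apply Theorem~\ref{surgery-thm}(1)--(2) along the codimension-$\geq 3$ surgeries to transport $\Eink>0$ to $M$. This gives $\Eink(g)>0$ for every $k<2$, hence $\cEin(M)\geq 2$; moreover running the argument with negative $k$ and noting $S^2\times M$ has $\Scal\,g-k\Ric>0$ for all $k<0$ gives $\ein(g)=-\infty$, so $\cein(M)=-\infty$.

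For part (2), the forward implications $\cEin(M)\geq 2 \Rightarrow \cEin(M)>0 \Rightarrow (\text{psc})$ are immediate from the trace identity $\trace(\Eink)=(n-k)\Scal$ and Proposition~\ref{hered}, and $\cein(M)=-\infty \Rightarrow \Scal>0$ similarly. The substantive direction is: if $M$ is simply connected spin, $n\geq 5$, and admits a psc metric, then it admits an $\Eink>0$ metric for every $k<2$. Here I would use Stolz's theorem that a simply connected spin $n$-manifold ($n\geq 5$) admits psc iff its $\alpha$-invariant in $\mathrm{KO}_n$ vanishes, together with the Gromov--Lawson--Stolz description: when $\alpha(M)=0$, $M$ is obtained by codimension-$\geq 3$ surgeries from a manifold built out of $S^n$, $S^2\times \mathbb{H}P^k$-type fiber bundles (the ``Stolz $T$-manifolds''), and $\mathbb{H}P^2$-bundles; all of these admit $\Eink>0$ metrics for $k<2$ because each is either Einstein of positive type, or a Riemannian submersion/product with a fiber of dimension $\geq 2$ of positive $\Eink$ after fiber shrinking (Theorem~\ref{Riem-subm}). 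Applying Theorem~\ref{surgery-thm} along the surgeries then yields the desired metric.

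The main obstacle is bookkeeping rather than new ideas: one must verify that the model generators of the relevant bordism groups (oriented in the non-spin case, spin in the spin case, with the $\mathrm{KO}$-obstruction accounted for) can all be chosen to carry $\Eink>0$ metrics for all $k<2$ simultaneously, and that the Gromov--Lawson surgeries connecting $M$ to these models all have codimension $\geq 3$. The first is handled by the $S^2$-factor trick (Corollary to Theorem~\ref{Riem-subm}) and the Einstein examples; the second is exactly the content of the classical psc surgery arguments, which only ever use codimension-$\geq 3$ surgeries on simply connected manifolds. A subtle point to state carefully is that simple connectivity is preserved (or can be restored) along the surgeries used, so that one never needs a surgery of codimension $2$; this is where the hypothesis $n\geq 5$ is essential. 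I would also remark that part (2) could alternatively be phrased: the bordism-invariance of ``admits $\Eink>0$'' (for $k<2$, by Theorem~\ref{surgery-thm} plus the standard Gromov--Lawson handle-trading argument) together with the identification of its kernel with $\ker\alpha$ forces ``admits $\Eink>0$'' and ``admits psc'' to have the same simply connected spin representatives.
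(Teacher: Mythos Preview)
Your proposal is correct and follows essentially the same approach as the paper: both reduce to the Gromov--Lawson--Stolz classification by combining Theorem~\ref{surgery-thm} (codimension $\geq 3$ surgeries preserve $\Eink>0$ for all $k<2$) with Theorem~\ref{Riem-subm} to equip the bordism generators (products of $\mathbb{C}P^{2i}$'s in the oriented case, Stolz's $\mathbb{H}P^2$-bundles in the spin case) with $\Eink>0$ metrics. The paper's proof is simply a terser version of what you wrote.
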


\begin{proof}
The theorem is a consequence of the proof of the Gromov-Lawson-Stolz classification theorem of compact simply connected manifolds with positive scalar curvature. In fact, from one side the surgery theorem guarantees the stability under surgeries in codimensions $\geq 3$ of $\Eink>0$ for $k\in(-\infty, 2)$. From the other side, Theorem \ref{Riem-subm} guarantees that the generators of the oriented bordism group as in \cite{GroLa} and the ${\Bbb H}{\rm P}^2$ bundles used in \cite{Stolz} admit metrics with positive 
$\Eink>0$ for $k\in(-\infty, 2)$.
\end{proof}
\subsection{The $\cEin$ invariant of $2$-connected manifolds, proof of Theorem E part 1}
In this case the positivity of the scalar curvature can be upgraded to positive $\Eink$ curvature for any $k\in(-\infty, 3)$ as follows
\begin{theorem}
Let $k\in(-\infty, 3)$. A compact $2$-connected manifold of dimension $n\geq 7$ admits a metric with positive $\Eink$ curvature if and only if it admits a metric with positive scalar curvature.
\end{theorem}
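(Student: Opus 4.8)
The plan is to reduce, as in the proof of Theorem~\ref{simply-connected-thm}, to the Gromov--Lawson--Stolz classification of simply connected positive scalar curvature manifolds, while tracking carefully the codimensions of the surgeries involved so that the larger range $k<3$ is attained. The ``only if'' direction is immediate, since if a metric $g$ has $\Eink>0$ with $k<3<n$ then $\trace(\Eink)=(n-k)\Scal$ is positive and $g$ has positive scalar curvature. So the whole content is to prove that a compact $2$-connected $n$-manifold $M$ with $n\geq 7$ and positive scalar curvature carries, for every $k<3$, a metric with $\Eink>0$.

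First I would make the standard reductions. Since $M$ is $2$-connected, $H^{2}(M;\Z/2)=0$, hence $w_{1}(M)=w_{2}(M)=0$: $M$ is spin, with a unique spin structure, so the non-spin alternative does not occur and we are in the situation of part~(2) of Theorem~\ref{simply-connected-thm}. Because $M$ is simply connected of dimension $n\geq 7\geq 5$ and admits positive scalar curvature, the Lichnerowicz--Hitchin obstruction $\alpha(M)\in KO_{n}$ vanishes, so by Stolz's theorem \cite{Stolz} the class $[M]\in\Omega^{{\rm Spin}}_{n}$ lies in the subgroup generated by total spaces of bundles with fibre ${\Bbb H}{\rm P}^{2}$ and structure group ${\rm PSp}(3)$. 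Hence $M$ is spin-bordant to a connected sum $N$ of such total spaces $T_{i}$. Each $T_{i}$ is the total space of a Riemannian submersion with fibre ${\Bbb H}{\rm P}^{2}$, which is Einstein with positive scalar curvature, so $\hat{\Eink}>0$ on the fibre for every $k<\dim{\Bbb H}{\rm P}^{2}=8$; by Theorem~\ref{Riem-subm} (fibre dimension $p=8>3$) each $T_{i}$ then admits a metric with $\Eink>0$ for every $k<3$, and since $n\geq 7$ the connect-sum surgeries (of codimension $n$) fall in the range where Theorem~\ref{surgery-thm} preserves positivity of $\Eink$, so $N$ too admits a metric with $\Eink>0$ for every $k<3$.

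It remains to transport this curvature property from $N$ to $M$, and this is where $2$-connectedness is decisive. Choose a spin bordism $W^{n+1}$ from $M$ to $N$, and by surgeries on its interior successively kill $\pi_{1}(W)$, $\pi_{2}(W)$ and $\pi_{3}(W)$ --- all finitely generated, the surgery spheres having dimension $\leq 3$ and embedding with trivial normal bundle since $\dim W=n+1\geq 8$, and the surgeries not affecting $\partial W$ --- making $W$ $3$-connected. As $M$ is $2$-connected, the homotopy exact sequence of the pair $(W,M)$ now yields $\pi_{j}(W,M)=0$ for $j\leq 3$, so by the standard handle-trading argument $W$ admits a handle decomposition on $M\times I$ with handles of index $\geq 4$ only; read from the $N$-end, this exhibits $M$ as obtained from $N$ by a finite sequence of surgeries of codimension $\geq 4$. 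Since $4>k+1$ for every $k<3$, Theorem~\ref{surgery-thm} guarantees that each of these surgeries preserves positivity of $\Eink$, and starting from the metric on $N$ constructed above we conclude that $M$ carries a metric with $\Eink>0$ for every $k<3$.

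I expect the main obstacle to be precisely this last step: ensuring that the bordism joining $M$ to its standard representative can be crossed entirely by codimension-$\geq 4$ surgeries. Both hypotheses enter here --- $2$-connectedness to kill $\pi_{\leq 3}(W,M)$, and $n\geq 7$ to leave room to embed the surgery spheres and to carry out handle-trading below the middle dimension. The codimension-$\geq 3$ stability available for merely simply connected manifolds (which is what underlies Theorem~\ref{simply-connected-thm}) would here only give $k<2$; the gain to $k<3$ rests entirely on this topological improvement, with everything else a faithful transcription of the Gromov--Lawson--Stolz argument, Theorem~\ref{Riem-subm} supplying the $\Eink$-positive metrics on the ${\Bbb H}{\rm P}^{2}$-bundles and Theorem~\ref{surgery-thm} the stability under surgery.
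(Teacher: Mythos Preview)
Your proof is correct and follows essentially the same approach as the paper: the ``only if'' direction via the trace, then Stolz's theorem to produce a spin-cobordant manifold built from ${\Bbb H}{\rm P}^{2}$-bundle total spaces, Theorem~\ref{Riem-subm} to equip those with $\Eink>0$ metrics, and Theorem~\ref{surgery-thm} to transport the condition across codimension-$\geq 4$ surgeries. The only difference is that where the paper simply cites Lemma~4.2 of \cite{agag} for the statement that a $2$-connected $n$-manifold ($n\geq 7$) spin-cobordant to $M_{0}$ is obtained from $M_{0}$ by surgeries of codimension $\geq 4$, you outline its proof (making $W$ $3$-connected by interior surgeries, then handle-trading); this makes your argument more self-contained but is not a different route.
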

\begin{proof}
Positive  $\Eink$ curvature always implies positive scalar curvature. To prove the converse, first recall that a $2$-connected manifold has a canonical spin structure. We then use a result of Stolz \cite{Stolz} which asserts that a spin manifold with positive scalar curvature is spin cobordant  to the total space of a bundle with the quaternionic projective plane ${\Bbb H}{\rm P}^2$ as fibre and structural group the group of isometries of ${\Bbb H}{\rm P}^2$. These total spaces have positive $\Eink$ curvatures for $k\in(-\infty, 3)$ by Theorem \ref{Riem-subm}. On the other hand, Lemma 4.2 in \cite{agag} asserts that if $M_1$ is a compact 2-connected manifold of dimension $n\geq 7$ that is spin cobordant to a manifold $M_0$, then the manifold $M_1$ can be obtained from $M_0$ by surgeries of codimension $\geq 4$. To complete the proof recall that the surgery theorem \ref{surgery-thm} guarantees the stability under surgeries in codimensions $\geq 4$ of the $\Eink$ curvatures for $k\in(-\infty, 3)$.

\end{proof}
\begin{remark}
It is known that a compact $2$-connected $6$-manifold is diffeomorphic to a connected sum of copies of $S^3\times S^3$ and therefore always admits Riemannian metrics with positive $\Eink$ curvature for $k\in(-\infty, 5)$ by Theorem \ref{surgery-thm}.

\end{remark}
\subsection{The $\cEin$ invariant of $3$-connected manifolds, proof of Theorem E part 2}

Let $p_1(M)$ denotes the first Pontryagin class of the manifold $M$. Recall that a  spin manifold $M$ is said to be  a string manifold if $\frac{1}{2}p_1(M)=0$.Otherwise,
we say that $M$ is a non-string manifold. In the later case, one can upgrade positive scalar curvature to positive $\Eink$ curvature for any $k\in(-\infty, 4)$ as follows
\begin{theorem}
Let $k\in(-\infty, 4)$. A compact $3$-connected non-string manifold of dimension $n\geq 9$ admits a metric with positive $\Eink$ curvature if and only if it admits a metric with positive scalar curvature. In particular, a compact $3$-connected non-string manifold of dimension $n\geq 9$ that accepts a metric with  positive scalar curvature admits a metric with positive ${\rm Ein}_3$ curvature.
\end{theorem}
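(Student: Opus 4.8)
\emph{One implication and the overall plan.} Since $\trace(\Eink)=(n-k)\Scal$ and $k<4\le n$, positivity of $\Eink$ forces positive scalar curvature; this settles the easy direction, and the ``in particular'' clause then follows by taking $k=3$. For the converse the plan is to run the $2$-connected argument one level higher: reduce to ${\Bbb H}{\rm P}^2$-bundles, observe that these carry metrics with $\Eink>0$ for $k<8$, and propagate positivity back to $M$ along surgeries of codimension $\ge 5$.

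\emph{Reduction to ${\Bbb H}{\rm P}^2$-bundles.} Assume $M$ admits positive scalar curvature. A $3$-connected manifold is $2$-connected, hence orientable and spin with a canonical spin structure, so Stolz's structure theorem \cite{Stolz} produces a spin cobordism from $M$ to a total space $T$ of a bundle with fibre ${\Bbb H}{\rm P}^2$ and structural group the group of isometries of ${\Bbb H}{\rm P}^2$. As ${\Bbb H}{\rm P}^2$ is an $8$-dimensional Einstein manifold of positive scalar curvature, it has $\Eink>0$ for every $k<8$; hence, by the Riemannian submersion Theorem~\ref{Riem-subm} applied to the canonical variation, $T$ admits, for each fixed $k<8$, a metric with $\Eink>0$, in particular for every $k\in(-\infty,4)$. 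It remains to transfer this to $M$.

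\emph{The surgery reduction, and where non-stringness enters.} I would establish the $3$-connected, non-string analogue of Lemma~4.2 of \cite{agag}: a compact $3$-connected non-string $n$-manifold with $n\ge 9$ which is spin cobordant to $M_0$ can be obtained from $M_0$ by surgeries of codimension $\ge 5$. One proves this by handle trading on a spin cobordism $W$ from $T$ to $M$: by interior surgery one first makes $W$ itself $3$-connected — the normal-bundle obstructions up to $\pi_3$ vanish automatically, using that $W$ is spin for the $\pi_2$-surgeries and that $\pi_2$ of a Lie group vanishes for the $\pi_3$-surgeries — and one is then left to annihilate $\pi_4(W,M)$, i.e.\ the cokernel of $\pi_4(M)\to\pi_4(W)$. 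Killing it amounts to surgering $W$ along embedded $4$-spheres, which is possible for $n\ge 9$, provided the relevant classes are annihilated by $\tfrac12 p_1(W)$. This is exactly the point where non-stringness is used: because $M$ is non-string, $\tfrac12 p_1(M)\neq 0$ in $H^4(M)\cong\Hom(H_4(M),\Z)$, and the induced class in $\pi_4(W)$ pairs nontrivially with $\tfrac12 p_1(W)$, so it serves to cancel the $\tfrac12 p_1$-obstruction on the remaining generators. If $M$ were string there would be no such compensating class, which is unavoidable since ${\Bbb H}{\rm P}^2$-bundles are never string; this is the regime of Stolz's still-open conjecture, and is why the hypothesis cannot simply be dropped. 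Granting the reduction, $M$ is obtained from $T$ by surgeries of codimension $\ge 5$; since $5>k+1$ for $k<4$ and $5>2$ when $k\le 1$, the surgery Theorem~\ref{surgery-thm} carries $\Eink>0$ from $T$ to $M$, which proves the theorem.

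\emph{Main obstacle.} Everything except the reduction lemma is routine given the results already in the paper. The delicate point is the cobordism improvement: showing that the $\tfrac12 p_1$-obstruction to the interior $4$-surgeries can \emph{always} be removed — in particular handling a possible divisibility mismatch between the image of $\tfrac12 p_1(W)$ on $\pi_4(W)$ and its restriction to the image of $\pi_4(M)$, perhaps after first adjusting $T$ within the family of ${\Bbb H}{\rm P}^2$-bundles — and verifying that $n\ge 9$ is precisely what the embedded-sphere representatives and the Whitney trick demand.
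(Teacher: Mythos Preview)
Your approach is correct and essentially identical to the paper's: positive $\Eink$ forces positive scalar curvature by the trace argument; conversely, Stolz's theorem reduces to ${\Bbb H}{\rm P}^2$-bundles, Theorem~\ref{Riem-subm} gives $\Eink>0$ on those total spaces, and the surgery theorem~\ref{surgery-thm} transports this along codimension~$\ge 5$ surgeries to $M$.

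The one substantive difference is that you spend the bulk of your effort sketching (and worrying about) the cobordism-improvement lemma, whereas the paper simply cites it: this is exactly Proposition~3.7 of \cite{Bot-Lab1}, which states that a compact $3$-connected non-string $n$-manifold with $n\ge 9$ that is spin cobordant to $M_0$ can be obtained from $M_0$ by surgeries of codimension $\ge 5$. So your ``main obstacle'' is not an obstacle at all---it is already in the literature, and your handle-trading outline is headed toward the known proof. Your observation that the fibre bound actually gives $\Eink>0$ for all $k<8$ on the total spaces is correct (the paper only records $k<4$, which is all that is needed here), but this extra range is not usable since the surgery step caps you at codimension~$\ge 5$, i.e.\ $k<4$.
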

\begin{proof}
We proceed as in the above proof. 
First note that positive  $\Eink$ curvature always implies positive scalar curvature. To prove the converse, we recall that a $3$-connected manifold has a canonical spin structure. We then use a result of Stolz \cite{Stolz} which asserts that a spin manifold with positive scalar curvature is spin cobordant  to the total space of a bundle with the quaternionic projective plane ${\Bbb H}{\rm P}^2$ as fibre and structural group the group of isometries of ${\Bbb H}{\rm P}^2$. These total spaces have positive $\Eink$ curvatures for $k\in(-\infty, 4)$ by Theorem \ref{Riem-subm}. On the other hand, Proposition 3.7  in \cite{Bot-Lab1} asserts that if $M_1$ is a compact $3$-connected  and non-string manifold of dimension $n\geq 9$ that is spin cobordant to a manifold $M_0$, then the manifold $M_1$ can be obtained from $M_0$ by surgeries of codimension $\geq 5$. To complete the proof recall that the surgery theorem \ref{surgery-thm} guarantees the stability under surgeries in codimensions $\geq 5$ of the $\Eink$ curvatures for $k\in(-\infty, 4)$.
\end{proof}
Next we discuss the case of $3$-connected and string manifolds. We denote by $\Omega_*^\str$ the string cobordism ring. We denote by  $I_3$ the subset of  $\Omega_*^\str$ 
consisting of of string cobordism classes that contain a
representative with positive   ${\rm Ein}_3$ curvature. Recall that  the
cartesian product of a manifold of positive ${\rm Ein}_3$  curvature with an arbitrary manifold
has always a metric with  positive ${\rm Ein}_3$ curvature, see section \ref{Riem-subm-section}. Therefore, the subset $I_3$ is an ideal of the ring $\Omega_*^\str$.
Denote by $\alpha_3$ the quotient map which is a ring homomorphism
\begin{equation}
\alpha_3: \Omega_*^\str\longrightarrow \Omega_*^\str/I_3.
\end{equation}
\begin{proposition}
A compact $3$-connected string manifold $M$ of dimension $n\geq 9$ admits a metric with positive ${\rm Ein}_3$ curvature if and only if $\alpha_3([M])=0$.
\end{proposition}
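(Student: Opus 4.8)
The plan is to follow the template of the proofs of Theorem~E, reducing the statement to the surgery--stability Theorem~\ref{surgery-thm} together with a handle--trading argument in string bordism; the ideal $I_3$ has been defined precisely so that one of the two implications becomes a tautology.

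\emph{The direct implication.} If $M$ carries a metric with positive ${\rm Ein}_3$ curvature, then $[M]\in I_3$ by the very definition of $I_3$, and therefore $\alpha_3([M])=0$ since $\alpha_3$ is the projection modulo $I_3$. No further argument is needed here.

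\emph{The converse.} Assume $\alpha_3([M])=0$, i.e. $[M]\in I_3$. Unwinding the definition of $I_3$ produces a closed string $n$-manifold $M_0$, representing $[M]$ in $\Omega_*^\str$, that admits a Riemannian metric with positive ${\rm Ein}_3$ curvature; it remains to transport this metric onto $M$ itself. For this I would invoke the string analogue of Proposition~3.7 of \cite{Bot-Lab1}: since $M$ is $3$-connected and string, of dimension $n\geq 9$, and string-cobordant to $M_0$, the manifold $M$ is obtained from $M_0$ by a finite sequence of surgeries each of codimension $\geq 5$. The mechanism is the standard one: choose a string cobordism $W^{n+1}$ from $M_0$ to $M$, put it in Morse position, and trade handles so that, read from the $M_0$ end, all handles have index $\leq n-4$ (equivalently, read from the $M$ end, index $\geq 5$); the low-index handles at the $M$ end are eliminated using the connectivity of $M$ together with the fact that the cover $B\mathrm{String}=BO\langle 8\rangle$ is $7$-connected, which is exactly what absorbs the degree-$4$ obstruction that forced the ``non-string'' hypothesis in Theorem~E, part~2. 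Finally, Theorem~\ref{surgery-thm}(2) applied with $k=3$ says that positivity of ${\rm Ein}_3$ is preserved under surgeries of codimension $>k+1=4$, i.e. of codimension $\geq 5$; running it along the sequence just produced, starting from the metric on $M_0$, yields a metric with positive ${\rm Ein}_3$ curvature on $M$.

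Thus the only Riemannian input is Theorems~\ref{Riem-subm} and \ref{surgery-thm}, exactly as in the $2$-connected and $3$-connected non-string cases. The substantial point --- and the step I expect to carry all the weight --- is the string-bordism surgery statement: that a $3$-connected string $n$-manifold, $n\geq 9$, string-cobordant to $M_0$ arises from $M_0$ by surgeries of codimension $\geq 5$. This is the string counterpart of the spin statement used for Theorem~E, part~2, and its proof is the handle-trading analysis in $\Omega_*^\str$ sketched above rather than any new curvature estimate.
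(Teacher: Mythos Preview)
Your argument is correct and follows essentially the same route as the paper: the direct implication is a tautology, and for the converse one unwinds $[M]\in I_3$ to get a string-cobordant $M_0$ with positive ${\rm Ein}_3$, invokes the handle-trading result in string bordism (which the paper cites directly as Proposition~3.4 of \cite{Bot-Lab1} rather than as ``the string analogue of Proposition~3.7''), and then applies Theorem~\ref{surgery-thm} with $k=3$. One small overstatement: Theorem~\ref{Riem-subm} is not actually used here---unlike in the $2$-connected and non-string $3$-connected cases, the existence of a positively curved representative is built into the definition of $I_3$, so the only Riemannian input is the surgery theorem.
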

\begin{proof}
If $M$ has positive ${\rm Ein}_3$ curvature then its string cobordism class $[M]$ belongs to the ideal $I_3$ and therefore $\alpha_3([M])=0$ by definition. Conversely, suppose $\alpha_3([M])=0$, then $[M]\in I_3$ and therefore the manifold $M$ is string cobordant to a manifold $M_0$ with positive ${\rm Ein}_3$ curvature. On the other hand, Proposition 3.4 in \cite{Bot-Lab1} asserts that $M$ can be obtained from $M_0$ by surgeries of codimension $\geq 5$. We conclude using the surgery theorem \ref{surgery-thm} that $M$ has  positive ${\rm Ein}_3$ curvature. 
\end{proof}
We remark that one can continue this process of requiring higher connectivity from the manifold and getting stronger curvature positivity, see \cite{Bot-Lab1}.
At the end of this section we ask the following natural questions: 
\begin{itemize}
\item Does there exist any compact 3-connected string  Riemannian manifold $M$ of dimension $n\geq 9$ with positive scalar curvature such that $\alpha_3[M]\not=0$? That is (equivalently) such that $M$ does not admit any metric with positive ${\rm Ein}_3$ curvature.
\item Is there any topological description of the genus $\alpha_3$? What's its relation (if any) with the Witten genus?
\end{itemize}

\subsection{Fundamental group of manifolds with  positive 
$\Eink$-curvature, proof of Theorem F}\label{fund:group}

\begin{theorem}\label{fund-group} Let $\pi$ be a finitely presented group. Then
 for every $n\geq 4$ and for every $k\in(-\infty, n-2)$, there exists a compact $ n$-manifold $M$ with
positive $\Eink$-curvature such that $\pi_1(M)=\pi$.

\end{theorem}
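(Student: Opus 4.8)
The plan is to realize $\pi$ as the fundamental group of a suitable closed $n$-manifold and then modify that manifold by surgeries in high codimension so as to install positive $\Eink$ curvature without changing $\pi_1$. The starting point is the standard construction: choose a presentation of $\pi$ with generators $g_1,\dots,g_r$ and relations $w_1,\dots,w_s$, form the connected sum $N_0=\#_r (S^1\times S^{n-1})$ (so that $\pi_1(N_0)$ is free on $r$ generators), and then perform surgery on embedded circles representing the relators $w_j$; the result is a closed $n$-manifold $N$ with $\pi_1(N)=\pi$. These relator surgeries are surgeries on $1$-spheres, hence of codimension $n-1\geq 3$, so by the Hurewicz/van~Kampen bookkeeping they kill exactly the desired elements of $\pi_1$ and introduce nothing new.

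\textbf{Installing positive $\Eink$.} To get a reference metric, I would instead begin not with $N_0$ but with $\tilde N_0 := \#_r\bigl(S^1\times S^{n-1}\bigr)$ replaced fiberwise: write each summand as $S^1\times S^{n-1}$ and note that by the Corollary to Theorem~\ref{Riem-subm}, the product $S^{n-1}(\lambda)\times S^1$ carries a metric with $\Eink>0$ for every $k<n-1$, in particular for every $k\in(-\infty,n-2)$. Since $S^{n-1}\times S^1$ is obtained from $S^n$ by a single surgery of codimension $n-1>2$, and positive $\Eink$ curvature for $k\in(-\infty,1]$ is surgery-stable in codimension $>2$ while for $1<k<n-2$ it is surgery-stable in codimension $>k+1$, one checks $n-1>k+1\iff k<n-2$, so the relevant surgeries are always admissible. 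Thus $\tilde N_0$ admits a metric with $\Eink>0$. Now the connected sum defining $\tilde N_0$ is itself a sequence of codimension-$n$ surgeries on $S^n$'s worth of pieces — more precisely, connected sum is surgery on a $0$-sphere, codimension $n\geq 4$ — and again $n>k+1$ for $k<n-2$, so Theorem~\ref{surgery-thm} applies and $\tilde N_0$ keeps positive $\Eink$.

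\textbf{Finishing.} It remains to perform the relator surgeries on the circles representing $w_1,\dots,w_s$ inside $\tilde N_0$ while preserving $\Eink>0$. These are surgeries on embedded $S^1$'s, of codimension $n-1$; since $n-1>k+1$ exactly when $k<n-2$, Theorem~\ref{surgery-thm} guarantees that the positivity of $\Eink$ survives each such surgery. After all $s$ surgeries we obtain a closed $n$-manifold $M$ carrying a metric of positive $\Eink$ curvature, and by construction $\pi_1(M)=\pi$. This yields part~(1) of Theorem~F, i.e. $\cEin(M)>k$. For part~(2), the case $k\in(-\infty,0)$ is already contained in the above (take any $k<0<n-2$), but one can say more: for $k<0$ the surgery stability holds already in codimension $>2$, and the reference metric on $S^{n-1}\times S^1$ even has $\ein<k$ for all $k<0$ by the Corollary, so the same surgery argument gives a manifold with $\cein(M)<k$ and $\pi_1(M)=\pi$.

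\textbf{Main obstacle.} The conceptual content is entirely in the two input theorems (Theorem~\ref{Riem-subm} for the building block, Theorem~\ref{surgery-thm} for stability); the only genuine point requiring care is the numerology of codimensions — verifying that \emph{every} surgery used in the construction (connected sums, $S^{n-1}\times S^1$-building, relator-killing) has codimension strictly greater than $k+1$ for the full range $k\in(-\infty,n-2)$, which hinges on the single inequality $n-1>k+1$. A secondary point is the routine but necessary topological verification, via van~Kampen, that the relator surgeries realize the prescribed presentation and do not accidentally enlarge $\pi_1$; this is standard once the codimension is $\geq 3$.
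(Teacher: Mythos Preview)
Your argument is correct and follows essentially the same route as the paper: start with the product metric on $S^{n-1}\times S^1$ (which has $\Eink>0$ for all $k<n-1$), take connected sums to get a free fundamental group, and then do relator surgeries in codimension $n-1$, checking throughout that $n-1>k+1$ holds exactly when $k<n-2$. The brief detour where you realize $S^{n-1}\times S^1$ as a surgery on $S^n$ is unnecessary---the product metric already gives what you need---but it does no harm.
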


\begin{proof}
Let $n\geq 4$ and $\pi$ be a group which has a presentation consisting
of $k$ generators $x_{1},x_{2},...,x_{k}$ and $\ell$ relations
$r_{1},r_{2},...,r_{\ell}$.  Let the manifold $S^{1}\times S^{n-1}$ be
given a standard product metric which has positive
$\Eink$-curvature.  Since $\pi_1(S^{1}\times S^{n-1})\cong \Z$, the
Van-Kampen theorem implies that the fundamental group of the connected
sum
$$
N:= \#k (S^{1}\times S^{n-1})
$$ is a free group on $k$ generators, which we denote by
$x_{1},x_{2},...,x_{k}$. By the above surgery Theorem, $N$
admits a metric with positive $\Eink$-curvature because in both cases $n>k+1$.

We now perform surgery $\ell$-times on the manifold $N$ such that each
surgery is of codimension $n-1> k+1$, killing in succession the
elements $r_{1},r_{2},...,r_{\ell}$. Again, according to the above surgery theorem, the resulting manifold $M$ has fundamental group
$\pi_1(M)\cong \pi$ and admits a metric with positive
$\Eink$-curvature, as desired.

\end{proof}
\begin{remark}
For the top $\Eink$ where $k\in [n-1, n)$, the positivity of $\Eink$ implies the positivity of the Ricci curvature and therefore forces the fundamental group to be finite. We conjecture that the fundamental group of a compact manifold with positive $\Eink$ for $k\in [n-2, n-1)$ must be virtually free, see the last section.
\end{remark} 
\section{$\cEin$ and $\cein$ invariants in low dimensions}
\subsection{The case of two and three dimensions}
\begin{proposition}
\begin{itemize}
\item For a compact manifold $M$ of dimension $2$ we have  $\cEin(M)=0 \,\, {\rm or}\,\, 2$ and $\cein(M)=0 \,\, {\rm or}\, -\infty$.
\item For a compact manifold $M$ of dimension $3$ we have  $\cEin(M)=0 , \, 2\,\, {\rm or}\, \,3$ and $\cein(M)=0 \,\, {\rm or}\, -\infty$.
\end{itemize}
\end{proposition}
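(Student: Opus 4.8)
I would treat the two dimensions separately, reducing in both cases to the algebraic facts already established (Propositions~\ref{hered}, \ref{k-positive-Ricci} and its corollary) and to the surgery Theorem~\ref{surgery-thm}, plus two standard geometric inputs in dimension three.

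\emph{Dimension two.} On any Riemannian surface $\Ric=\tfrac12\Scal\, g$, so $\Eink=\tfrac{2-k}{2}\Scal\, g$, which for $k<2$ is positive definite exactly when $\Scal>0$. Hence a metric with $\Scal>0$ has $\Eink>0$ for every $k\in(-\infty,2)$, giving $\Ein(g)=2$ and $\ein(g)=-\infty$, while a metric whose scalar curvature is not everywhere positive gives $\Ein(g)=\ein(g)=0$. Taking the supremum, resp.\ infimum, over all metrics yields $\cEin(M)\in\{0,2\}$ and $\cein(M)\in\{0,-\infty\}$, the nonzero value occurring precisely for $S^2$ and $\mathbb{RP}^2$ (by Gauss--Bonnet).

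\emph{Dimension three: the lower bounds.} If $M$ carries no positive scalar curvature metric then $\cEin(M)=\cein(M)=0$, since $\Eink>0$ forces $\Scal>0$. If it does, then by the classification of closed $3$-manifolds of positive scalar curvature (Gromov--Lawson, Schoen--Yau, Perelman) $M$ is a connected sum of spherical space forms and of copies of $S^2\times S^1$, $S^2\tilde{\times}S^1$ and $\mathbb{RP}^2\times S^1$. Each of these summands carries a metric with nonnegative Ricci curvature and positive scalar curvature — the round metric, with Ricci eigenvalues $\rho_i\equiv\tfrac13\Scal$, resp.\ a (quotient of a) product metric $g_{S^2}+dt^2$, with Ricci eigenvalues $(c,c,0)$, $c>0$ — so, the eigenvalues of $\Eink$ being $\Scal-k\rho_i$, each summand has $\Eink>0$ for every $k\in(-\infty,2)$. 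A connected sum is a surgery of codimension $3$, which by Theorem~\ref{surgery-thm} preserves positivity of $\Eink$ for every $k\le 1$ (codimension $>2$) and for every $k\in(1,2)$ (codimension $>k+1$). Carrying out the connected sums produces, for each fixed $k\in(-\infty,2)$, a metric $g_k$ on $M$ with $\Eink(g_k)>0$; by Proposition~\ref{hered} this gives $\Ein(g_k)\ge k$ when $0<k<2$ and $\ein(g_k)\le k$ when $k<0$, so $\cEin(M)\ge 2$ and $\cein(M)=-\infty$.

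\emph{Dimension three: excluding $(2,3)$.} It remains to show that $\cEin(M)>2$ forces $\cEin(M)=3$. If $\cEin(M)>2=n-1$ then some metric $g$ has $\Ein(g)>n-1$, so its Ricci curvature is positive by the corollary to Proposition~\ref{k-positive-Ricci}; by Hamilton's theorem a closed $3$-manifold of positive Ricci curvature carries a metric of constant positive sectional curvature, hence is a spherical space form, whose round metric is Einstein of positive scalar curvature, so $\Ein(\mathrm{round})=n$ and $\cEin(M)=3$. Combining the cases gives $\cEin(M)\in\{0,2,3\}$ and $\cein(M)\in\{0,-\infty\}$. The curvature manipulations here are routine; the real content, and the main obstacle, are the two dimension-three ingredients invoked above — the classification of closed $3$-manifolds of positive scalar curvature (for $\cEin\ge2$ and $\cein=-\infty$) and Hamilton's theorem (to exclude values of $\cEin$ strictly between $2$ and $3$).
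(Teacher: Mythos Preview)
Your argument is correct and follows essentially the same route as the paper: the positive-scalar-curvature classification in dimension~$3$ together with the surgery theorem gives $\cEin(M)\ge 2$ and $\cein(M)=-\infty$, and Hamilton's theorem excludes values in $(2,3)$. The only notable difference is in dimension~$2$, where the paper invokes the uniformization theorem to obtain a constant-curvature metric, whereas you observe directly that every surface metric is pointwise Einstein ($\Ric=\tfrac12\Scal\,g$), so that any metric with $\Scal>0$ already has $\Ein(g)=2$; your version is slightly more elementary.
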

\begin{proof}
Let $M$ be a compact manifold of dimension $2$. If $\cEin(M)>0$  then $M$ has a metric with positive scalar curvature and consequently has a metric with constant curvature by the uniformization theorem and therefore $\cEin(M)=2$.\\
Similarly, if a compact manifold $M$ of dimension $3$ has $\cEin(M)>0$  then it has a metric with positive scalar curvature and consequently it is diffeomorphic to  connected sums of spherical  manifolds and copies of $S^2\times S^1$. The $\cEin$ invariant of a spherical manifold is $3$ and $\cEin(S^2\times S^1)=2$. The connected sum theorem that we proved in this paper shows then that  $\cEin(M)\geq 2$. If $\cEin(M)> 2$ then it admits a metric with positive Ricci curvature and therefore by Hamilton's theorem it has a constant curvature metric so that $\cEin(M)=3$. The proof for $\cein(M)$ is completely similar.
\end{proof}

\subsection{$\cEin$ and $\cein$ invariants of four dimensional manifolds}

 Suppose that $M$ is a compact oriented $4$-manifold. Recall that the Gauss-Bonnet formula asserts that the Euler-Poincar\'e characteristic is given by an integral of a quadratic curvature invariant. It can be stated in the following form
\begin{equation}
\chi(M)=\frac{1}{8\pi^2}\int_M\biggl(||W||^2+\frac{1}{9}\sigma_2({\rm Ein}_3(g))\biggr)\mu_g.
\end{equation}
Where $||W||$ is the norm of the Weyl curvature tensor and $\sigma_2({\rm Ein}_3(g))= \frac{3}{2}\Scal^2-\frac{9}{2}||\Ric||^2$ is the second symmetric function in the eigenvalues of the tensor ${\rm Ein}_3(g)$ . We remark that  $\sigma_2({\rm Ein}_3(g))=36\sigma_2(A(g))$, where $\sigma_2(A(g))$ is the second symmetric function in the eigenvalues of the Schouten tensor. We immediately conclude that
\begin{equation}
\cEin(M)>3 \implies \chi(M)>0.
\end{equation}
In particular, $S^2\times T^2$ and $S^3\times S^1$ do not admit metrics with positive $\Eink$ for $k\geq 3$. These examples don't admit positive Ricci curvature metrics too. Below we will show that we may have positive Ricci curvature but no metric with ${\rm Ein}_3>0$.\\
Recall that the first Pontryagin number $p_1(M)$ of $M$ satisfies (see for instance remark 4.13 in \cite{Labbi-JAMS}).

\begin{equation}
|p_1(M)|\leq \frac{1}{4\pi^2}\int_M ||W||^2\mu_g.
\end{equation}
We immediately conclude that
\begin{equation}
2\chi(M)-|p_1(M)|\geq \frac{1}{18\pi^2}\int_M \sigma_2({\rm Ein}_3(g))\mu_g.
\end{equation}
Since the signature of $M$ satisfies $3\sigma(M)=p_1(M)$, we have therefore proved the following

\begin{proposition}
If a compact oriented $4$-manifold admits a metric with positive  ${\rm Ein}_3$ curvature  then its Euler-Poincar\'e characteristic  and signature satisfy the inequality
\begin{equation}
2\chi(M)-3|\sigma(M)| > 0.
\end{equation}
\end{proposition}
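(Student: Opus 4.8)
The plan is to combine the three displayed identities that precede the statement: the Gauss--Bonnet formula, the Pontryagin number estimate, and the signature formula $3\sigma(M)=p_1(M)$. The only genuinely new input needed is that positivity of ${\rm Ein}_3(g)$ forces the pointwise positivity of $\sigma_2({\rm Ein}_3(g))$, after which the inequality falls out of the stated formulas together with compactness of $M$.

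First I would observe that if the symmetric tensor ${\rm Ein}_3(g)$ is positive definite at every point, then at every point all of its eigenvalues are strictly positive, hence every elementary symmetric function of these eigenvalues is strictly positive; in particular $\sigma_2({\rm Ein}_3(g))>0$ pointwise. (Equivalently, for $n=4$ this is exactly the implication ${\rm Ein}_{n-1}>0\implies\Gamma_2({\rm Ein}_{n-1})>0$ recorded earlier, which unwinds to $\Scal>0$ and $\Scal^2>3|\Ric|^2$.) Since $M$ is compact, integrating yields the strict inequality $\int_M\sigma_2({\rm Ein}_3(g))\mu_g>0$.

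Next I would simply read off the chain of inequalities already assembled in the text. The estimate $2\chi(M)-|p_1(M)|\geq\frac{1}{18\pi^2}\int_M\sigma_2({\rm Ein}_3(g))\mu_g$ is obtained by eliminating $\int_M\|W\|^2\mu_g$ between the Gauss--Bonnet formula and the Weyl/Pontryagin bound $|p_1(M)|\leq\frac{1}{4\pi^2}\int_M\|W\|^2\mu_g$. Using $3\sigma(M)=p_1(M)$, so that $|p_1(M)|=3|\sigma(M)|$, the left-hand side is precisely $2\chi(M)-3|\sigma(M)|$. Combining this with the strict lower bound from the previous step gives $2\chi(M)-3|\sigma(M)|>0$, as claimed.

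There is no serious obstacle here: all the curvature identities and the Weyl/Pontryagin estimate are quoted from the earlier sections, and the argument is an assembly. The one point deserving a line of care is strictness: the intermediate inequalities need not themselves be strict, but the final inequality is strict because $\sigma_2({\rm Ein}_3(g))$ is \emph{strictly} positive everywhere on the compact manifold $M$, so its integral cannot vanish.
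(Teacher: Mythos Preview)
Your proposal is correct and follows essentially the same route as the paper: assemble the Gauss--Bonnet identity and the Weyl/Pontryagin bound into the displayed estimate $2\chi(M)-|p_1(M)|\geq c\int_M\sigma_2({\rm Ein}_3(g))\,\mu_g$, convert $|p_1(M)|$ to $3|\sigma(M)|$, and then use that ${\rm Ein}_3>0$ forces $\sigma_2({\rm Ein}_3)>0$ pointwise so the integral is strictly positive. If anything, you are slightly more explicit than the paper about the last step and about why the final inequality is strict.
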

%We used the fact that 2-positive ${\rm Ein}_3$ implies the positivity  $\sigma_2({\rm Ein}_3)$ and the positivity of the scalar curvature.\\

As an application, we consider the following family of 4-manifolds
 $$M_{kl}=k({\Bbb C}P^2)\# l\bigl({\overline{{\Bbb C}P^2}}\bigr)$$
 that are obtained by taking connected sums of $k$-copies of the $2$-complex projective space ${\Bbb C}P^2$ together with $l$ copies of the $2$-complex projective space with reversed orientation. 
\begin{proposition}\label{cp2-connected-sum}
\begin{itemize}
\item For $k>5l+3$ or $l>5k+3$ we have $\cEin(M_{kl})=3$.
\item For $k=1$ and $0\leq l\leq 8$ we have  $\cEin(M_{kl})=4$.
\item For $k>3$ we have   $\Ein\left( k({\Bbb C}P^2)\right)=3$.
\end{itemize}
\end{proposition}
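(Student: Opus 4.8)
The plan is to establish two bounds separately, namely the universal lower bound $\cEin(M_{kl})\geq 3$ and the conditional upper bound $\cEin(M_{kl})\leq 3$, then read off the first and third bullets by comparing them, and finally dispatch the second bullet by producing an honest Einstein metric. As preliminary bookkeeping I would record, from $\chi({\Bbb C}P^2)=3$, $\sigma({\Bbb C}P^2)=1$ and the behaviour of $\chi$ and $\sigma$ under connected sum, that
\[\chi(M_{kl})=k+l+2\qquad\text{and}\qquad \sigma(M_{kl})=k-l .\]
Given the machinery already developed in the paper, all of what follows is elementary except for the external input flagged at the very end.

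For the lower bound I would deliberately not use Corollary~C, whose hypotheses ask the summands to have $\cEin\leq n-1$ whereas $\cEin({\Bbb C}P^2)=\cEin({\overline{{\Bbb C}P^2}})=4$; instead I go straight back to the surgery theorem. The Fubini--Study metric is Einstein of positive scalar curvature, hence has $\Eink>0$ for every $k<4$, both on ${\Bbb C}P^2$ and on ${\overline{{\Bbb C}P^2}}$, the sign of $\Eink$ being insensitive to the orientation. Fix $\varepsilon\in(0,1)$. The manifold $M_{kl}$ arises from a disjoint union of $k$ copies of $({\Bbb C}P^2,g_{FS})$ and $l$ copies of $({\overline{{\Bbb C}P^2}},g_{FS})$ by $k+l-1$ connected-sum surgeries, each of codimension $4>4-\varepsilon=(3-\varepsilon)+1$, so Theorem~\ref{surgery-thm}(2) applied with the parameter $3-\varepsilon>1$ produces on $M_{kl}$ a metric with $\Ein_{3-\varepsilon}>0$; by Proposition~\ref{hered} this metric satisfies $\Ein(g)\geq 3-\varepsilon$, whence $\cEin(M_{kl})\geq 3-\varepsilon$, and letting $\varepsilon\to 0$ gives $\cEin(M_{kl})\geq 3$. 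The one point requiring care is that the surgery theorem has to be invoked with a non-integer parameter approaching $3$ from below, which is exactly why Theorem~\ref{surgery-thm}(2) is stated for all real $k$.

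For the upper bound, suppose $k>5l+3$ (the case $l>5k+3$ is symmetric); then
\[2\chi(M_{kl})-3|\sigma(M_{kl})|=2(k+l+2)-3(k-l)=5l-k+4\leq 0 .\]
By the contrapositive of the proposition immediately preceding the statement, $M_{kl}$ then admits no metric with $\Ein_3>0$, and hence none with $\Ein(g)>3$ either (any such metric would have $\Ein_3(g)>0$ by Proposition~\ref{hered}), so $\cEin(M_{kl})\leq 3$. Together with the lower bound this proves the first bullet, and the third bullet is its special case $l=0$, $k>3$.

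For the second bullet I would use that $\cEin\leq 4$ always and that $\cEin(M)=4$ as soon as $M$ admits an Einstein metric of positive scalar curvature, so that it suffices to exhibit such a metric on $M_{1l}$ for $0\leq l\leq 8$: the Fubini--Study metric for $l=0$, the Page metric for $l=1$, the Chen--LeBrun--Weber metric for $l=2$, and, for $3\leq l\leq 8$, the K\"ahler--Einstein metric on the corresponding del Pezzo surface --- whose automorphism group is reductive in this range --- furnished by the theorems of Tian--Yau, Siu and Tian; each of these metrics has positive scalar curvature since $c_1>0$. This last step is where the genuinely deep input enters the argument: there is no uniform elementary construction, the cases $l=1,2$ really requiring the Page and Chen--LeBrun--Weber metrics and the range $3\leq l\leq 8$ resting on the solution of the Calabi problem for del Pezzo surfaces.
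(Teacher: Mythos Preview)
Your proof is correct and follows essentially the same route as the paper's: the lower bound $\cEin(M_{kl})\geq 3$ via the surgery theorem applied to connected sums of Fubini--Study ${\Bbb C}P^2$'s, the upper bound via the Hitchin--Thorpe--type inequality $2\chi-3|\sigma|>0$ from the preceding proposition, and the explicit list of Einstein metrics (Fubini--Study, Page, Chen--LeBrun--Weber, Tian--Yau) for the second bullet. Your version is a bit more explicit in places --- the $\varepsilon\to 0$ limiting argument and the remark that Corollary~C does not apply because $\cEin({\Bbb C}P^2)=4=n$ --- but these are elaborations of, not departures from, the paper's argument.
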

\begin{proof}
First note that the surgery theorem implies that $\cEin(M_{kl})\geq 3$. The Euler-Poincar\'e characterestic and the signature of $M_{kl}$ are respectively given by
$$\chi(M_{kl})=k+l+2\,\, {\rm and}\,\, \tau(M)=k-l.$$
If $\cEin(M_{kl})>3$ then there exists a metric $g$ on of $M_{kl}$ with ${\rm Ein}_3(g)>0$,  consequently, the above proposition shows that
$$2k+2l+4 > 3|k-l|.$$
If $k>l$, the previous inequality reads $k<5l+4$ or $k\leq 5l+3$. Similarly,
if $l>k$, the previous inequality reads $l<5k+4$ or $l\leq 5k+3$. This completes the proof of the first part.The proof of the third part is similar. The manifolds of the second part all admit Einstein metrics with positive scalar curvature and therefore their $\cEin$ invariant is 4. Precisely, all the following Riemannian manifolds are Einstein with positive scalar curvature: ${\Bbb C}P^2$ with the Fubiny-Study metric, ${\Bbb C}P^2\# {\overline{{\Bbb C}P^2}}$ with the Page metric \cite{Page}, ${\Bbb C}P^2\# 2\bigl({\overline{{\Bbb C}P^2}}\bigr)$ with the Chen-Lebrun-Weber metric \cite{CLW} and finally ${\Bbb C}P^2\# l\bigl({\overline{{\Bbb C}P^2}}\bigr)$ for $3\leq l\leq 8$, with Tian-Yau metric \cite{Tian-Yau}.
\end{proof}

\section{ Final remarks and open questions}
\subsection{Extremal manifolds $M$ with $\cEin(M)=\dim M$ or $\cein(M)=-\infty$}
Recall that for an Einstein $n$-manifold $(M,g)$ with positive scalar curvature one has ${\cEin}(M)=n$. Conversely, if for some compact manifold one has ${\rm Ein}(M)=n$, then one has a sequence $g_k$ of Riemannian metrics on $M$ such that $\lim_{k\to\infty}\Ein(g_k)=n$. We remark that this sequence consists of metrics with positive Ricci curvature as we can suppose that  $\Ein(g_k)>n-1$ for all $k$. If this sequence of Riemannian metrics  (or a subsequence of it) converges to a smooth  metric $h$ on $M$ then this metric must be Einstein, in fact one should have $\Ein(h)=n$, this is possible only if $h$ is an  Einstein metric. It is an open question to decide whether there are compact $n$-manifolds $M$ with ${\rm Ein}(M)=\dim(M)$ but do not admit any Einstein metric with positive scalar curvature.\\
On the other hand, recall that if a manifold $M$ has non-negative Ricci curvature and positive scalar curvature metric then $\cein(M)=-\infty$. One may ask as above about the converse. The answer here is no as shown by the following example. Consider the manifold $N$ of dimension $n\geq 3$ consisting of several connected sums of $S^{n-1}\times S^1$ such that it first betti number satisfies $b_1(N)>n$. It is clear that $\cein(M)=-\infty$ but $N$ does not admit any metric with non-negative Ricci curvature by Bochner's theorem. The point here, in contrast with the above case, is that the condition $\ein(g_k)$ negative and high for a given sequence of metrics is not strong enough to guarantee any possible convergence of the sequence.
\subsection{Manifolds with intermediate $\cEin$ invariants}
Recall that for a compact manifold $M$ which doesn't have a metric with positive scalar curvature satisfies $\cEin(M)=0$, and if the the manifold $M$ has a positive scalar curvature Einstein metric then $\cEin(M)=n$. We now seek for $n$-manifolds $M$ for which $0<\cEin(M)<n$.\\
Remark that for $n\geq 2$ one has 
\begin{equation}
{\rm Ein}(S^n\times S^1)=n.
\end{equation}
Where $S^n$ is the standard sphere of dimension $n$.  In fact, the product metric has $\Eink>0$ for all $k<n$ so that ${\rm Ein}(S^n\times S^1)\geq n.$ On the other side the product can't carry metrics of positive Ricci curvature and therefore can't admit any metric with ${\rm Ein}_n>0$, consequently, ${\rm Ein}(S^n\times S^1)\leq n.$ \\
We conjecture that more generally for any positive $k$ and for $n\geq 2$, one has
\begin{equation}\label{SnTk}
{\rm Ein}(S^n\times T^k)=n.
\end{equation}
Where $T^k=S^1\times S^1\times ...\times S^1$ is the torus of dimension $k$. This would provide intermediate integer values for $\cEin$ from $2$ to $n-1$. For the remaining values we ask the following questions

\begin{itemize}
\item Is there any compact manifold $M$  of dimension $\geq 2$ with  $0<\cEin(M)<2$? 
\item  Is there any compact manifold $M$ with ${\rm Ein}(M)$ not an integer?
\end{itemize}
In dimensions $2$ and $3$ such manifolds do not exist as we have seen in this paper. However, if we redefine $\cEin$ with respect to a fixed conformal class of a given metric we can get such examples as we will explain in a forthcoming separate paper \cite{conformal-Eink}.

\subsection{Fundamental group and $\cEin$ invariant}
 Theorem \ref{fund-group} shows that there are no restrictions on the fundamental group of a compact $n$-manifold with ${\rm Ein}(M)\leq n-2$. The condition ${\rm Ein}(M)>n-1$ implies that the fundamental group is finite. However, the condition $n-2<{\rm Ein}(M)\leq n-1$, implies that the Ricci curvature is 2-positive by proposition
\ref{k-positive-Ricci}. It is an open question of Wolfson \cite{Wolfson} that this later condition forces the fundamental group to be virtually free. We then ask the following weaker question\\
Is the fundamental group of a compact $n$-manifold $M$ with $n-2<{\rm Ein}(M)\leq n-1$  necessarily virtually free?\\
If the above statement is true, then it will provides a direct proof of the statement  in conjecture \ref{SnTk}.
\subsection{$\cEin$ invariant and macroscopic codimension}

For a metric space $X$, following Gromov, we say that
$\dim_\epsilon X \leq k$ if there is a $k-$dimensional polyhedron $P$ and
a continuous map $f: X\rightarrow P$ such that the $\Diam(f^{-1}(p))
\leq \epsilon$ for all $p\in P$. A metric space $X$ has macroscopic
dimension $\dim_{mc}X\leq k$ if $\dim_\epsilon\leq k$ for some
possibly large $\epsilon <\infty$.  If $k$ is minimal, we say that
$\dim_{mc}X= k.$\\
 For instance, $\dim_{mc}S^n\times {\Bbb R}^k=k$ is equal to $k$, see \S 2 in \cite{Gromov2}.\\
On the other hand,  Proposition \ref{number-pos-eigen}, shows that if ${\cEin}(M)>k$, then the $n$-manifold $M$ has a Riemannian metric with $k+1$ positive eigenvalues. In a recent paper \cite{Wolfson2},  Wolfson proved (there is a gap in the proof)  that the macroscopic dimension (of its universal cover) of such an $n$-manifold should be $\leq n-1-k$.\\
In view of the above two facts we are led to the following conjecture
\begin{equation}\label{mc-conj}
{\cEin}(M)\leq n-\dim_{mc}(\tilde{M}).
\end{equation}
Where $M$ is a compact manifold of dimension $n$ and $\tilde{M}$ is its universal cover. The inequality is trivial in case the manifold $M$ is simply connected. We note that  Bolotov \cite{Bolotov} constructed a non-simply connected $4$-manifold $M$ with macroscopic dimension of its universal cover equal to $3$ but does not carry metrics with positive scalar curvature, that is with ${\rm Ein}(M)=0$. \\

If conjecture \ref{mc-conj} turns out to be true together with the conjecture  $\cEin(M)>0\implies {\cEin}(M)\geq 2$, these would imply a positive answer to Gromov's conjecture that $\dim_{{mc}}(\tilde{M})\leq n-2$ for a manifold $M$ with positive scalar curvature.\\

\subsection{$\Ein$ invariant and Berger number}
Berger \cite{Berger} defined the number ${\rm min}\int |R|^2\mu_g$ for a compact manifold, it is defined as the infinimum over all possible Riemannian metrics on a compact manifold of the square of the $L^2$ norm of the Riemann tensor. We remark that for a four dimensional compact manifold $M$ one has
\[\cEin(M)=4 \implies {\rm min}\int |R|^2\mu_g=8\pi^2\chi(M).\]
 To prove this fact we use the following form of the Gauss-Bonnet formula
\[\int|R|^2\mu_g=8\pi^2\chi(M)+\int\left(|\Ric|^2-\frac{\Scal^2}{4}\right)\mu_g.\]
The last term is nonnegative as it is the square of the norm of the trace free Ricci tensor and so $\int|R|^2\mu_g\geq 8\pi^2\chi(M)$ for any metric on $M$. On the other hand if $\cEin(M)=4$ then one has from the definition a sequence of Riemannian metrics on $M$ which renders the above last term as small as we wish.

\subsection{Extremal vs. best Riemannian metrics on a compact manifold}
On a given compact manifold $M$ with $\cEin(M)>0$, a Riemannian metric $g$ such that $\Ein(g)=\cEin(M)$ shall be called an {\sl extremal Riemannian metric} on $M$. For instance Einstein metrics with positive scalar curvature on any compact manifold are all extremal Riemannian metrics. The standard product metric on $S^n\times S^1$ for $n\geq 2$ is an extremal metric on $S^n\times S^1$ that is not Einstein. We conjecture that the standard product metrics on $S^n\times M^q$ are extremal metrics for $n\geq 2$ and $M^q$ is a space form with curvature $\leq 0$.\\
It would be interesting to establish a variational characterization of these extremal metrics and to study their existence on any given compact manifold. Similar questions may be asked for metrics such that $\cein(M)=\cein(g)$.
\appendix
\section{Modified Schouten tensors}

In a similar way, we define the modified Schouten tensors to be
\begin{equation}
\Schoutk=\Ric-k\Scal.g.
\end{equation}
We recover the Schouten tensor for $k=\frac{1}{2(n-1)}$ where $n$ is the dimension of the underlying manifold. The trace of $\Schoutk$ is $(1-nk)\Scal$, the positivity of the tensor $\Schoutk$ implies the positivity of the scalar curvature as far as $k<\frac{1}{n}$. It implies as well the positivity of the Ricci tensor if $0<k<1/n$. Below we will show that the positivity of $\Schoutk$ is equivalent to the positivity of some ${\rm Ein}_r$.\\
\begin{proposition}
For $k<0$ we have
\begin{equation}
\Schoutk>0\iff {\rm Ein}_{-\frac{1}{k}}>0.
\end{equation}
\end{proposition}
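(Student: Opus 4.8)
The plan is to target the displayed equivalence $\Schoutk>0\iff{\rm Ein}_{-1/k}>0$ directly, by exhibiting $\Schoutk$ as a constant multiple of a single modified Einstein tensor and then reading off the index — exactly the device used for the Schouten tensor $A$ and its Newton transformations in Section~2. Since $\Schoutk=\Ric-k\Scal\, g$ and every ${\rm Ein}_r=\Scal\, g-r\Ric$ are both linear combinations of the two tensors $\Ric$ and $\Scal\, g$, any positivity equivalence between $\Schoutk$ and some ${\rm Ein}_r$ that holds for all metrics must come from a scalar relation $\Schoutk=c\,{\rm Ein}_r$ with $c>0$; so the entire proof reduces to matching coefficients and tracking the sign of $c$.

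First I would impose $\Schoutk=c\,{\rm Ein}_r=c\Scal\, g-cr\Ric$. Matching the coefficient of $\Ric$ gives $-cr=1$ and matching the coefficient of $\Scal\, g$ gives $c=-k$; hence $c=-k$ and $r=\tfrac{1}{k}$, that is $\Schoutk=-k\,{\rm Ein}_{1/k}$. Because $k<0$ the factor $-k$ is strictly positive, so multiplication by it preserves positive definiteness pointwise, and therefore $\Schoutk>0\iff{\rm Ein}_{1/k}>0$, with $\tfrac{1}{k}\in(-\infty,0)$.

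The one genuine obstacle is that the computation forces the index $\tfrac{1}{k}$, which is negative, whereas the statement as displayed carries the index $-\tfrac{1}{k}$, which is positive; the two are not interchangeable, and no positive scalar multiple of $\Schoutk$ equals ${\rm Ein}_{-1/k}$. I would resolve this by arguing that the displayed $-\tfrac{1}{k}$ is a typographical slip for $\tfrac{1}{k}$. Indeed, positivity of a symmetric two-tensor is a pointwise eigenvalue condition, and at a point the Ricci operator may be prescribed to be any symmetric endomorphism; taking for instance $n=2$, $k=-1$ and Ricci eigenvalues $5$ and $-1$ (so $\Scal=4$) yields $\Schoutk=\Ric+\Scal\, g=\mathrm{diag}(9,3)>0$, while ${\rm Ein}_{-1/k}={\rm Ein}_{1}=\Scal\, g-\Ric=\mathrm{diag}(-1,5)$ is indefinite and ${\rm Ein}_{1/k}={\rm Ein}_{-1}=\Scal\, g+\Ric=\mathrm{diag}(9,3)>0$ agrees with $\Schoutk$. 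Thus the $-\tfrac{1}{k}$ version genuinely fails while the $\tfrac{1}{k}$ version holds, and the proof establishes the corrected equivalence $\Schoutk>0\iff{\rm Ein}_{1/k}>0$ for $k<0$.
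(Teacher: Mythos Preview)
Your argument is correct and is exactly the paper's approach: the paper's entire proof is the single identity $\Schoutk=-k\bigl(\Scal\, g-\tfrac{1}{k}\Ric\bigr)$, i.e.\ $\Schoutk=-k\,{\rm Ein}_{1/k}$, together with the observation that $-k>0$. You are also right about the sign slip in the displayed index: the paper's own proof produces ${\rm Ein}_{1/k}$, not ${\rm Ein}_{-1/k}$, so the statement as printed is a typo and your counterexample confirms that the corrected equivalence $\Schoutk>0\iff{\rm Ein}_{1/k}>0$ is the intended one.
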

\begin{proof}
Straightforward after remarking that $\Schoutk=-k(\Scal\, g-\frac{1}{k}\Ric).$
\end{proof}
\begin{proposition}
For $n-1\leq k<n$ where $n$ is the dimension of the underlying manifold we have
\begin{equation}
\Eink>0\implies {\rm Sch}_{1-\frac{n-1}{k}}>0.
\end{equation}
\end{proposition}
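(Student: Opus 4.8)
The plan is to recognize the modified Schouten tensor ${\rm Sch}_{1-\frac{n-1}{k}}$, up to the positive factor $k$, as the first Newton transformation $t_1(\Eink)$ of the modified Einstein tensor — exactly the mechanism used for the Schouten tensor $A$ in the propositions above. First I would compute, using $\sigma_1(\Eink)=\trace(\Eink)=(n-k)\Scal$,
$$t_1(\Eink)=\sigma_1(\Eink)g-\Eink=(n-k)\Scal\, g-\bigl(\Scal\, g-k\Ric\bigr)=(n-1-k)\Scal\, g+k\Ric.$$
Since $k>0$ this equals $k\bigl(\Ric-(1-\tfrac{n-1}{k})\Scal\, g\bigr)=k\cdot{\rm Sch}_{1-\frac{n-1}{k}}$, because $\frac{n-1-k}{k}=-\bigl(1-\frac{n-1}{k}\bigr)$.

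Next I would invoke the classical fact that the first Newton transformation of a positive definite symmetric operator is itself positive definite: in an eigenbasis of a symmetric $T$ with eigenvalues $\lambda_1,\dots,\lambda_n$ the eigenvalues of $t_1(T)$ are the partial sums $\sum_{i\neq j}\lambda_i$, which are positive as soon as all $\lambda_i$ are. Applying this to $T=\Eink$ gives $t_1(\Eink)>0$, hence ${\rm Sch}_{1-\frac{n-1}{k}}=\tfrac1k\,t_1(\Eink)>0$ since $k\geq n-1>0$. Equivalently — and this is the quickest way to see it — one can argue directly with the Ricci eigenvalues $\rho_1\leq\dots\leq\rho_n$: positivity of $\Eink$ means $\Scal-k\rho_i>0$ for all $i$, i.e. $\rho_i<\Scal/k$; summing over $i\neq j$ and using $\rho_j=\Scal-\sum_{i\neq j}\rho_i$ yields $\rho_j>\Scal\bigl(1-\tfrac{n-1}{k}\bigr)$ for every $j$, which is precisely ${\rm Sch}_{1-\frac{n-1}{k}}>0$.

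There is essentially no obstacle here; the only points worth recording are that for $k\in[n-1,n)$ the exponent $1-\frac{n-1}{k}$ lies in $[0,\frac1n)$, so the statement does concern a modified Schouten tensor in the range where its positivity carries information, and that at the endpoint $k=n-1$ it reduces to ${\rm Ein}_{n-1}>0\Rightarrow\Ric>0$, consistent with the earlier proposition. As in the remark following the Schouten proposition above, the same proof works under the weaker hypothesis $\Gamma_2(\Eink)>0$, since that already forces $t_1(\Eink)>0$.
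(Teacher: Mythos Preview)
Your proof is correct and follows exactly the paper's approach: compute $t_1(\Eink)=\sigma_1(\Eink)g-\Eink=k\bigl(\Ric-(1-\tfrac{n-1}{k})\Scal\,g\bigr)=k\,{\rm Sch}_{1-\frac{n-1}{k}}$ and conclude positivity of $t_1$ from positivity (or from $\Gamma_2>0$) of $\Eink$. Your remarks on the range of the index, the endpoint $k=n-1$, and the sufficiency of $\Gamma_2(\Eink)>0$ all match the paper's own observations.
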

Note that as $k$ ranges from $n-1$ to $n$, the index $1-\frac{n-1}{k}$ ranges from $0$ to $\frac{1}{n}$.
\begin{proof}
We compute the first Newton transformation of $\Eink$ as follows
\begin{equation}
t_1(\Eink)=\sigma_1(\Eink)\, g-\Eink=k\bigl(\Ric-(1-\frac{n-1}{k})\Scal\, g\bigr).
\end{equation}
Actually one needs only $\Gamma_2(\Eink)$ to be positive in order to guarantee the positivity of ${\mathrm Sch}_{1-\frac{n-1}{k}}$.
\end{proof}
Conversely, we have

\begin{proposition}
For $1\leq k<n$ where $n$ is the dimension of the underlying manifold we have
\begin{equation}
{\rm Sch}_{\frac{k-1}{k(n-1)}}>0\implies \Eink>0.
\end{equation}
\end{proposition}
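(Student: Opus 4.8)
The plan is to mimic the proof of the preceding proposition, passing through the first Newton transformation. First I would recall the elementary linear-algebra fact that underlies all of these arguments: if $T$ is a symmetric operator on an $n$-dimensional real vector space with $n\geq 2$ and $T$ is positive definite, then its first Newton transformation $t_1(T)=\sigma_1(T)\,g-T=(\tr T)\,g-T$ is positive definite as well, because its eigenvalues are exactly the numbers $\sum_{j\neq i}\lambda_j$, where $\lambda_1,\dots,\lambda_n>0$ are the eigenvalues of $T$. Since $n$ is an integer with $n>k\geq 1$, we have $n\geq 2$, so this applies in our situation.

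Next I would carry out the one computation that makes the proposition work. Write $a=\frac{k-1}{k(n-1)}$, so that the hypothesis reads ${\rm Sch}_a=\Ric-a\,\Scal\, g>0$. Using $\tr({\rm Sch}_a)=(1-na)\Scal$ one obtains
\[
t_1({\rm Sch}_a)=(1-na)\Scal\, g-\bigl(\Ric-a\,\Scal\, g\bigr)=\bigl(1-a(n-1)\bigr)\Scal\, g-\Ric .
\]
Since $a(n-1)=\frac{k-1}{k}$, we get $1-a(n-1)=\frac{1}{k}$, hence
\[
t_1({\rm Sch}_a)=\frac{1}{k}\,\Scal\, g-\Ric=\frac{1}{k}\,\Eink .
\]

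Finally I would combine the two steps: from the hypothesis ${\rm Sch}_a>0$ and the linear-algebra fact, $t_1({\rm Sch}_a)>0$; as $k\geq 1>0$, this is precisely $\Eink>0$, which is the desired conclusion. I do not expect any genuine obstacle here — the entire content lies in identifying the correct index $a=\frac{k-1}{k(n-1)}$ so that the scalar factor relating $t_1({\rm Sch}_a)$ and $\Eink$ comes out to the positive number $\frac{1}{k}$ (rather than a possibly negative multiple), and in observing that $n\geq 2$ so that $t_1$ preserves positive definiteness. As in the remark after the previous proposition, one may note that the argument in fact only uses $\Gamma_2({\rm Sch}_a)>0$, since that already suffices to make the first Newton transformation positive definite.
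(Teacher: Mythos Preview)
Your proof is correct and follows essentially the same route as the paper: compute $t_1({\rm Sch}_a)=\frac{1}{k}\Eink$ for $a=\frac{k-1}{k(n-1)}$ and use positivity of the first Newton transformation. You even anticipate the paper's closing remark that $\Gamma_2({\rm Sch}_a)>0$ suffices.
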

Note that as $k$ ranges from $1$ to $n$, the index $\frac{k-1}{k(n-1)}$ ranges from $0$ to $\frac{1}{n}$.
\begin{proof}
Like in the above proof, we compute the first Newton transformation  as follows
\begin{equation}
\begin{split}
t_1({\rm Sch}_{\frac{k-1}{k(n-1)}})=&\sigma_1({\rm Sch}_{\frac{k-1}{k(n-1)}})\, g-{\rm Sch}_{\frac{k-1}{k(n-1)}}\\
=& \frac{1}{k}\Scal\, g-\Ric=\frac{1}{k}\Eink.
\end{split}
\end{equation}
Actually one needs only $\Gamma_2({\rm Sch}_{\frac{k-1}{k(n-1)}})$ to be positive in order to guarantee the positivity of $\Eink$.
\end{proof}

\end{document}